\theoremstyle{plain}
\newtheorem{thm}{Theorem}[section]
\newtheorem{lem}[thm]{Lemma}
\newtheorem{cor}[thm]{Corollary}
\newtheorem{prop}[thm]{Proposition}
\theoremstyle{definition}
\newtheorem{rem}[thm]{Remark}
\newtheorem{defn}[thm]{Definition}
\numberwithin{equation}{section} 
\newcommand{\R}{\mathbb{R}}
\newcommand{\C}{\mathbb{C}}
\newcommand{\N}{\mathbb{N}}
\newcommand{\Lp}{\widehat{L}^p}
\newcommand{\Lpp}{\widehat{L}^{p'}}
\newcommand{\A}{\alpha}
\newcommand{\CS}{\mathfrak{S}}
\newcounter{kotaeflg}
\newcommand{\kotae}[1]{
\ifodd \arabic{kotaeflg}
#1
\fi
}
\begin{document}
\title[ ]%
      {Well-posedness and decay estimates for
      1D nonlinear Schr\"odinger equations with Cauchy data in $L^p$}
\author[R. Hyakuna]{Ryosuke Hyakuna}
\address[R. Hyakuna]{Waseda University}
\email{107r107r@gmail.com}
\keywords{ }
\subjclass[2000]{35Q55}
\begin{abstract}
An $L^p$-theory of local and global solutions to the one dimensional nonlinear Schr\"odinger equations with power type nonlinearities $|u|^{\A-1}u,\,\A>1$ is developed.  Firstly, some twisted local well-posedness results in subcritical $L^p$-spaces are established for $p<2$.  This extends YI.Zhou's earlier results for the gauge-invariant cubic NLS equation.  Secondly, by a similar functional framework, the global well-posedness for small data in critical $L^p$-spaces is proved, and as an immediate consequence, $L^{p'}$-$L^p$ type decay estimates for the global solutions are derived, which are well known for the global solutions to the corresponding linear Schr\"odinger equation.  Finally, global well-posedness results for gauge-invariant equations with large $L^p$-data are proved, which improve earlier existence results, and from which it is shown that the global solution $u$ has a smoothing effect in terms of spatial integrability at any large time.  Linear weighted estimates and bi-linear estimates for Duhamel type operators in $L^p$-spaces play a central role in proving the main results.
\end{abstract}
\maketitle
%
%

\section{Introduction}
This paper is concerned with the Cauchy problem for the one dimensional nonlinear Schr\"odinger equation (NLS)
\begin{equation}
iu_t +u_{xx}+N(u) =0,\quad u|_{t=0} = \phi, \label{NLS}
\end{equation}
where the nonlinearity $N(u)$ is such that
\begin{eqnarray}
&&N(0)=0 \label{power1} \\ 
&&|N(u)-N(v)|\le C(|u|^{\A-1}+|v|^{\A-1} )|u-v|, \label{power2}
\end{eqnarray}
for some $ \A>1$.
Typical nonlinearities of such type are power type (not necessarilly gauge invariant) ones:
\begin{equation*}
|u|^{\A-1}u,\quad |u|^{\A-1}\bar{u},\quad |u|^{\A},etc\cdots.
\end{equation*}
As is well known, extensive investigations have been made on the initial value problem (\ref{NLS}) with (\ref{power1})-(\ref{power2}).
Usually, in the study of nonlinear dispersive equations including NLS, initial data are assumed to be in a function space whose norm is characterized by some kind of square integrability.  Examples of such data spaces are
 $L^2$-space, $L^2$-based Sobolev spaces $H^s$, weighted $L^2$-spaces and so on, and at present a lot of results on (\ref{NLS}) are known in the framework of those function  spaces.  Among them are the most basic local and global
existence theorem for (\ref{NLS}) in the $L^2$-space.  We summarize them as a proposition below:\\
\textbf{Proposition A.}

\begin{enumerate}
\item
\textit{If $1<\A<5$, {\rm (\ref{NLS})} with {\rm (\ref{power1})--(\ref{power2})} is locally well posed for large data in $L^2$.
\item
If $\A=5$, {\rm (\ref{NLS})} with {\rm(\ref{power1})--(\ref{power2})} is globally well posed for small data in $L^2$.
\item
If $1<\A<5$, {\rm (\ref{NLS})} with $N(u)=|u|^{\A-1}u$ is globally well posed for large data in $L^2$.}

\end{enumerate}

On the other hand, when data $\phi$ are not characterized by any kind of square integrability, much less is known about the solvability of (\ref{NLS}).  Most popular examples of
such data spaces are $L^p$-spaces ($p\neq 2$).  Unfortunately, as early as the 1960s H\"ormander  \cite{Hormander} showed that the linear Schr\"odinger equation corresponding to (\ref{NLS}) is
ill posed if $p\neq 2$ (see also \cite{Brenner}).  Especially, the corresponding solution to the linear equation for $L^p$-data does not belong to $L^p$ unless $p=2$.  
This implies that one cannot expect the usual well-posedness of NLS in $L^p,\,p\neq 2$ with persistence property of solutions.  For that reason, it seems analysis of nonlinear Schr\"odinger equations in the $L^p$-setting has been out of interest of many researchers.  However, as mentioned below $L^p$-theory of solutions has various interesting aspects, especially from the viewpoint of Fourier analysis.  The aim of this paper is three-fold but stated in one sentence: develop a satisfactory mathematical theory of solutions to (\ref{NLS}) in $L^p$ spaces as 
 a natural extension of standard $L^2$-theory.  For this purpose, throughout the paper, the initial data $\phi$ are assumed to be in the mere $L^p$-space and we do not impose any extra conditions on them except the smallness of their $L^p$-norm in the second main result.  To examine the solvability of (\ref{NLS}) with (\ref{power1})-(\ref{power2}), the standard scaling argument is useful.  For a given $\A$, the space $L^p$ is said to be subcritical if
$p>\frac{\A-1}{2}$, critical if $p=\frac{\A-1}{2}$ and supercritical if $p<\frac{\A-1}{2}$.  In view of the standard local well-posedness theory in the $H^s$-framework, one may expect the local existence for data in subcritical $L^p$-spaces and
the global existence for small data in critical $L^p$-spaces.  Our first two subject are concerning a generalization to the $L^p$-setting of Proposition A (i), (ii), which are local
existence in the subcritical space and global existence for small data in the critical space in the $L^2$-framework.  Now let us briefly explain our motivation for studying (\ref{NLS}) in $L^p$-spaces for $p<2$.
It comes from the corresponding linear Schr\"odinger equation in $\R^d$
\begin{equation}
iu_t+\Delta u=0,\quad u|_{t=0}=\phi.  \label{Linear}
\end{equation}
It is well known that, if $\phi \in L^p,\,1\le p\le 2$, the solution $u$ to the linear Schr\"odinger equation
is in $C(\R\setminus \{0 \} ;L^{p'})$ and has the $L^{p'}$-$L^p$ type decay estimate
\begin{equation}
\|u(t,\cdot )\|_{L^{p'}} \le (4\pi |t|)^{-d(\frac{1}{p}-\frac{1}{2}) } \label{Ldecay}
\|\phi \|_{L^p}
\end{equation}
if $\phi $ are in $L^p,\,1\le p\le 2$.  The estimate is very comparable with the Hausdorff--Young inequality
\begin{equation}
\|\hat{f} \|_{L^{p'}} \le C \|f\|_{L^p}
\end{equation}
for $1\le p \le 2$ and one may think (\ref{Ldecay}) reflects the fact that the operator $U(t):\phi \mapsto u(t)$ 
acts like a kind of the Fourier transform parametrized by $t\in \R\setminus \{0\}$.

The first motivation comes from a natural question of whether or not
a similar decay estimate holds for solutions to the nonlinear Schr\"odinger equations (NLS).  The question may be rephrased in the context of the Fourier analysis: does the nonlinear operator $\mathscr{N}(t):$data\,$\mapsto$ solution ($t\neq 0$) corresponding to NLS also act as a parametrized Fourier transform and a Haudorff--Young like estimate hold for $\mathscr{N}(t)$?  The first aim of the paper is clear.  We want to obtain similar, $L^p$-$L^{p'}$ type
decay estimates
\begin{equation}
\|u(t,\cdot)\|_{L^{p'}} \le C(\|\phi\|_{L^p}) \times |t|^{-(\frac{1}{p}-\frac{1}{2})},\quad t\neq 0 \label{intronldecay}
\end{equation}
 for global solutions to (\ref{NLS}) with data in
$L^p$-spaces.  However, as far as the author knows, there are no previous works which focus on data in mere $L^p$-spaces.   In fact, at present, even
the existence of global solutions such that $u(t) \in L^{p'}$ is unclear let alone their decay properties.  A hint on the study in this direction is from YI. Zhou's pioneering work on the local well-posed ness for the gauge-invariant 1D cubic NLS (i.e, $N(u)=|u|^2u$).  In \cite{Zhou}, he showed that for any $\phi \in L^p,\,1<p<2$, there exists a local solution $u$ to (\ref{NLS}) such that
\begin{equation}
U(-t)u(t) \in C([-T,T] ;L^p) \label{ipersistency}
\end{equation}
where $U(t)$ denotes free the propagator and $T>0$ is a life span of the local solution.
His work gives some clues for obtaining the desired decay estimate.  Indeed, if we find a global solution to (\ref{NLS}) with $\phi \in L^p$ such that
(\ref{ipersistency}) holds for arbitrarily large $T>0$ and such that
\begin{equation}
M\triangleq \sup_{t \in \R} \|U(-t)u(t) \|_{L^p} <\infty, \label{lpbb}
\end{equation}
we then immediately derive the an $L^{p'}$-$L^p$ type decay estimate by combining (\ref{lpbb}) with (\ref{Ldecay}).  In view of the scaling argument above, it is not unnatural to expect the existence of such global solutions for small data in critical $L^p$-spaces, that is $\phi \in L^{\frac{\A-1}{2}}$.
Although Zhou's work gives some light on the local well-posedness theory for NLS in $L^p$-spaces, one would still face difficulties when trying to get the decay estimate for small solutions above.  Firstly, the proof of the local results in \cite{Zhou} relies on a subtle cancellation property of nonlinearities and multi-linear interpolation between a key $L^1$ estimate and a classical $L^2$ estimate, and therefore, it is difficult to apply Zhou's method to general power type nonlinearities except for $N(u)=|u|^{2m}u,\,m\in \N$.  Secondly, the functional framework employed in \cite{Zhou} cannot work well for the critical nonlinearities.  One can observe this kind of difficulty more easily for the Hartree equation (see \cite{107H}).  The second aim of the paper is to provide a new local theory of solutions with the twisted property (\ref{ipersistency}) via a more popular functional framework in which one may cover more general power type nonlinearities (including non-gauge invariant and non-integer power type ones) and Cauchy data in critical $L^p$-spaces.  In this paper we employ the standard Lebesgue spaces  $L^q([0,T]; L^r)$ with $2/q+1/r=1/p$ as auxiliary spaces.  As is well known these are very commonly used function spaces to solve (\ref{NLS}).  For example, if $p=2$, it is well known that a local $L^2$-solution is constructed in
the space $L^{\infty}([0,T]; L^2)\cap L^q([0,T] ; L^r)$ for suitable choices of $q,r$ such that $2/q+1/r=1/2$ via the contraction mapping principle.  This is a classical result by Y. Tsutsumi \cite{yT}.  So our local result can be regarded as a natural generalization of the standard local $L^2$-theory of solutions in one space dimension.

Finally, the third aim is concerned with the global well-posedness for (\ref{NLS}) with
$N(u)=|u|^{\A-1}u$ without any smallness assumptions. When $p=2$, a global solution can easily be obtained thanks to the $L^2$-conservation and
the life span estimate $T\ge \|\phi \|_{L^2}^{-\frac{4(\A-1)  }{5-\A  }}$ upon getting a local solution on $[0,T]$ via the fixed point argument.  In the mere $L^p$-setting with $p\neq 2$, on the other hand, no conservation laws are available directly and therefore, it is nontrivial to
extend the local solution to a global one.  Nonetheless, in \cite{VV}, Vargas and Vega
studied the 1D cubic case and by means of a splitting argument, they showed that if $\phi \notin L^2$ but $\phi$ is sufficiently
close to an $L^2$-function in some sense, then a global solution can be established.  
Later in \cite{107H2}, \cite{107}, \cite{107T2} we considered the case of more general power $\alpha $ and use their approach to construct a global solution for $L^p$-Cauchy data
if $p<2$ is sufficiently close to $2$.  This is based on the natural idea that any $L^p$-function is ``close'' to an $L^2$-function if $p$ is near $2$.  
  Thus, global existence results can be established below the $L^2$-conservation law.  However, an interesting question on global well-posedenss in $L^p$ remains unsolved.  In earlier works, the global solutions are constructed  in Lebesgue spaces $L^q_{loc}(L^r)$.  For example, the global solution constructed by Vargas and Vega \cite{VV} belongs to $L^3_{loc}(L^6)$-space.  Now our local well-posedness results say the global solution satisfies
the twisted property (\ref{ipersistency}) at least for some finite $T>0$.  Therefore, one may ask: does the global solutions for $L^p$-data established in the above literature satisfy
\begin{equation*}
U(-t)u(t) \in C([0,T] ; L^p)
\end{equation*}
for any large $T>0$?  This is our third aim.  We want to prove global well-posedness in $L^p$ for large $L^p$-data.  Needless to say, this is a natural generalization of Proposition A (iii) into the
$L^p$-framework.  In this paper, we answer the question positively, if $\A>3$ and $p$ is sufficiently close to $2$.

\bigskip
\noindent\textit{List of notation.}
\begin{itemize}
\item
$a'$ means the conjugate exponent of $a$: \,$1/a+1/a'=1$.
\item
Let $0<q,r\le \infty$ and let $I\subset \R$.  The space-time Lebesgue space 
$L^q(I ; L^r)=L^q_I(L^r)$ is characterized by its norm
\begin{equation*}
\|u\|_{L_I^q(L^r)}\triangleq \left(\int_I \|u (t,\cdot)\|_{L^r}^q dt \right)^{\frac{1}{q}}
\end{equation*}
with the usual modification when $q=\infty$.  In particular, we write
$L_{\R}^q(L^r)=L^q(L^r)$.
\item
The Fourier transform $\mathcal{F}$ and the inverse Fourier transform $\mathcal{F}^{-1}$ are defined by
\begin{equation*}
\mathcal{F}f\triangleq \hat{f}\triangleq
\int_{\R} e^{-i\xi x} f(x) dx,\quad \mathcal{F}^{-1} f\triangleq
\int_{\R} e^{ix \xi} f(x) dx.
\end{equation*}
\item
$U(t)$ is the free propagator i.e. the solution to the linear Schr\"odinger equation (\ref{NLS}) is denoted by $U(t)\phi$.  Therefore, (\ref{Ldecay}) is expressed as the linear estimate
\begin{equation}
\|U(t)\phi \|_{L^{p'}} \le (4\pi |t|)^{-d(\frac{1}{p}-\frac{1}{2})} \|\phi \|_{L^p}.
\label{decay}
\end{equation}
We also write $(U^{-1}\phi)(t) \triangleq U(-t)\phi$.
\item
The hat $L^p$ spacee $\Lp$ is defined by
\begin{equation*}
\{\phi \in \mathcal{S}' \,| \, \mathcal{F}\phi \in L^{p'} \}.
\end{equation*}
\item
Throughout the paper, $c,C,C_1,C_2,\cdots,$ are positive constants which may not be the same at each appearance.

\end{itemize}

Before we state our main results, we introduce spaces of functions with the property (\ref{ipersistency}).  Let $E$ be a Banach space of functions on $\R$ and let $I\subset \R$ be an interval.  We define the space $C_{\mathfrak{S}}(I ;E)$ by
\begin{equation*}
C_{\mathfrak{S}}(I ; E) \triangleq \{ u:I\times \R \to \C\,|\,\,
(t\mapsto U(-t)u(t) )\in C(I ; E) \,\}.
\end{equation*}
Note that if a function space $E$ is such that the unitarity property
\begin{equation*}
\|U(t)\phi \|_{E}=\|\phi \|_{E},\,\,\forall t\in I,\,\,\phi \in E
\end{equation*}
holds
then we have $C(I ;E)=C_{\mathfrak{S}}(I ;E)$.
We define
\begin{equation}
\mathfrak{L}_I^{\infty}(E)\triangleq \mathfrak{L}^{\infty}(I;E)\triangleq \{u:I\times \R\to \C\,\,|\,
U(-t)u(t) \in L^{\infty}(I \,;\, E) \,\} \label{infty}
\end{equation}
equipped with the norm
\begin{equation*}
\|u\|_{\mathfrak{L}^{\infty}_I (E)}\triangleq \|U^{-1}u\|_{L^{\infty}_I (E)}.
\end{equation*}

We define the exponent $r\triangleq r(\A)$ by
\begin{equation}
r\triangleq
\begin{cases}
\A+1 &\text{if}\,\, 1<\A \le 3\\
2(\A-1) & \text{if}\,\, 3\le \A <5.
\end{cases}
\label{defnr}
\end{equation}
We first present the local well-posedness results for $L^p$-data, which extends Zhou's earlier results \cite{Zhou} to more general, not necessarily gauge invariant, power-like nonlinearities:
\begin{thm} (Large data local well-posedness in subcritical $L^p$-spaces)\label{LWP}\\
Assume that $1<\A<5$ and
\begin{equation*}
\max \left(\frac{\A-1}{2} ,\frac{2(\A-1)}{\A},\frac{\A+1}{\A} \right) <p\le 2.
\end{equation*}
Let $r$ be given by (\ref{defnr}) and let $q$ be such that
\begin{equation}
\frac{2}{q}+\frac{1}{r}=\frac{1}{p}. \label{thmscaling}
\end{equation}
Then for any $\phi \in L^p(\R)$ there exists $T\sim\|\phi \|_{L^p}^{-\frac{2p(\A-1)  }{2p-\A+1 } }>0$ and
a unique local solution $u$ to (\ref{NLS}) such that
\begin{equation*}
u \in C_{\CS }([0,T] ; L^p(\R) ) \cap L^q ([0,T ] ; L^r(\R)).
\end{equation*}
Moreover, the map $\phi \mapsto u(t)$ is locally Lipschitz from $L^p$ to $C_{\CS}([0,T] ; L^p)$.  Furthermore,
the solution $u$ has a smoothing effect in terms of spatial integrability:
\begin{equation}
u|_{]0,T] \times \R} \in C(]0,T] ;L^{p'}(\R)). \label{lsmoothing}
\end{equation}
\end{thm}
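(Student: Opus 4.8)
The plan is to realize the solution as the unique fixed point of the Duhamel map in the twisted space appearing in the statement, controlling its two constituents at once. A function $u$ solves (\ref{NLS}) precisely when
\[
u(t)=\Phi(u)(t)\triangleq U(t)\phi+i\int_0^t U(t-s)N(u(s))\,ds,
\]
and, setting $v(t)\triangleq U(-t)u(t)$, its persistence part reads
\[
U(-t)\Phi(u)(t)=\phi+i\int_0^t U(-s)N(u(s))\,ds.
\]
I would run a contraction argument in $X_T\triangleq C_{\CS}([0,T];L^p)\cap L^q([0,T];L^r)$ equipped with
\[
\|u\|_{X_T}\triangleq \|U^{-1}u\|_{C([0,T];L^p)}+\|u\|_{L^q([0,T];L^r)},
\]
seeking a fixed point in a ball $\{\|u\|_{X_T}\le R\}$ with $R\sim\|\phi\|_{L^p}$, the pair $(q,r)$ being tied by (\ref{thmscaling}) and $r$ by (\ref{defnr}).

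The analytic core is a package of three linear estimates, uniform on $[0,T]$: a \emph{homogeneous} bound $\|U(t)\phi\|_{L^q([0,T];L^r)}\lesssim\|\phi\|_{L^p}$ (scaling-consistent with (\ref{thmscaling})); an \emph{inhomogeneous auxiliary} bound $\|\int_0^t U(t-s)F(s)\,ds\|_{L^q([0,T];L^r)}\lesssim\|F\|_{L^{\tilde q'}([0,T];L^{\tilde r'})}$; and the \emph{twisted inhomogeneous} bound $\sup_t\|\int_0^t U(-s)F(s)\,ds\|_{L^p}\lesssim\|F\|_{L^{\tilde q'}([0,T];L^{\tilde r'})}$ together with continuity of $t\mapsto\int_0^t U(-s)F(s)\,ds$ into $L^p$. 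Here the auxiliary pair $(\tilde q,\tilde r)$ is chosen by $\A\tilde r'=r$ (which is why (\ref{defnr}) splits into two cases). All three are refinements of the dispersive decay estimate (\ref{decay}), obtained through the weighted linear and bilinear Duhamel estimates advertised in the abstract.

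For the nonlinear step, hypothesis (\ref{power2}) gives $|N(u)|\lesssim|u|^\A$ and the matching difference bound, so Hölder in space (using $\A\tilde r'=r$) followed by Hölder in time on the finite interval yields
\[
\|N(u)\|_{L^{\tilde q'}([0,T];L^{\tilde r'})}\lesssim T^{\theta}\|u\|_{L^q([0,T];L^r)}^{\A},\qquad \theta=1-\tfrac{\A-1}{2p},
\]
the crucial positivity $\theta>0$ being exactly the subcriticality $p>\tfrac{\A-1}{2}$; the remaining lower bounds on $p$ guarantee admissibility of the auxiliary pairs in the linear estimates. Combining this with estimates (a)--(c) gives $\|\Phi(u)\|_{X_T}\le C\|\phi\|_{L^p}+CT^{\theta}\|u\|_{X_T}^{\A}$ and the analogous contraction bound for $\Phi(u)-\Phi(w)$. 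Taking $R=2C\|\phi\|_{L^p}$ and imposing $CT^{\theta}R^{\A-1}\le\tfrac12$ makes $\Phi$ a contraction on the ball; solving for $T$ produces $T\sim\|\phi\|_{L^p}^{-(\A-1)/\theta}=\|\phi\|_{L^p}^{-2p(\A-1)/(2p-\A+1)}$, the claimed lifespan, and the same estimates deliver the local Lipschitz dependence $\phi\mapsto u$ into $C_{\CS}([0,T];L^p)$.

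The smoothing property (\ref{lsmoothing}) then follows easily: for $t_0\in\,]0,T]$ we have $u(t_0)=U(t_0)v(t_0)$ with $v(t_0)\in L^p$, so (\ref{decay}) places $u(t_0)\in L^{p'}$; writing $u(t)-u(t_0)=U(t)\big(v(t)-v(t_0)\big)+\big(U(t)-U(t_0)\big)v(t_0)$ and using that $|t|^{-(1/p-1/2)}$ stays bounded near $t_0>0$, that $v\in C([0,T];L^p)$, and that $t\mapsto U(t)$ is strongly continuous as a family $L^p\to L^{p'}$ away from the origin (approximate $v(t_0)$ by Schwartz functions and invoke (\ref{decay})), gives $u\in C(\,]0,T];L^{p'})$. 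I expect the genuine obstacle to be estimate (c): the operator $F\mapsto\int_0^t U(-s)F(s)\,ds$ must land in $L^p$ with $p<2$, yet $U(-s)$ is not bounded into $L^p$ for fixed $s$, and the usual Hilbert-space $TT^*$ reduction is unavailable because the natural ``middle'' space here is $L^p$ rather than $L^2$. Overcoming this requires genuinely exploiting the oscillation through the time integration via the weighted/bilinear Duhamel estimates, and it is there that the precise range of $p$ in the hypothesis is forced.
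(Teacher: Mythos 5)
Your overall architecture coincides with the paper's: a contraction in $\mathfrak{L}^{\infty}([0,T];L^p)\cap L^q([0,T];L^r)$, the homogeneous estimate (\ref{StrLp}) for $U(t)\phi$ and Proposition \ref{inhomo} for the $L^q(L^r)$ component of the Duhamel term, the lifespan read off from the power $T^{1-\frac{\A-1}{2p}}$, and the smoothing (\ref{lsmoothing}) deduced exactly as in the Hausdorff--Young type lemma. The genuine gap is in your estimate (c). You posit an \emph{unweighted, linear} bound $\sup_t\bigl\|\int_0^tU(-s)F(s)\,ds\bigr\|_{L^p}\lesssim\|F\|_{L^{\tilde q'}([0,T];L^{\tilde r'})}$ and then place all of $N(u)$ into $L^{\tilde r'}_x$ with $\A\tilde r'=r$. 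What the paper actually proves (Proposition \ref{bilinear}, obtained from the dual $\Lp$-Strichartz estimate via the factorization $U(-t)=M_t^{-1}\mathcal{F}^{-1}D_t^{-1}M_t^{-1}$ and the substitution $t\mapsto1/t$) is either the \emph{weighted} estimate (\ref{WDFS}), whose right-hand side carries the Jacobian weight $s^{\frac1p-\frac12}$ and is subject to the admissibility constraint $(\sigma',\rho')\in\widehat{\mathscr{S}}(p)$, or the \emph{bilinear} estimate (\ref{BLWDFS}), in which one factor must be measured in the twisted norm $\|U^{-1}f_2\|_{L^{\infty}_{[0,T]}(L^p)}$ rather than in a spacetime Lebesgue norm of $u$. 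Your (c) is neither of these, and the difference is not cosmetic.

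Concretely: for $1<\A\le3$, taking $\rho=\frac{\A+1}{\A}=r/\A$ the weighted estimate does yield your reduction after H\"older in time (on a finite interval the weight only contributes a positive power of $T$, and one lands on $T^{1-\frac{\A-1}{2p}}\|u\|_{L^q(L^r)}^{\A}$); this is the paper's Case 1, and the condition $\A\le 3$ enters precisely in checking $(\sigma',\rho')\in\widehat{\mathscr{S}}(p)$. For $3\le\A<5$, however, the choice $\rho=r/\A=\frac{2(\A-1)}{\A}$ either violates the extra constraint $1/\beta<1/4$ in the definition of $\widehat{\mathscr{S}}$ or, where it is admissible, the time-H\"older needed to pass from $\bigl\|s^{\frac1p-\frac12}\|u(s)\|_{L^r}^{\A}\bigr\|_{L^{\sigma}_{[0,T]}}$ to $T^{\theta}\|u\|^{\A}_{L^q(L^r)}$ requires $\frac1\sigma\ge\frac{\A}{q}$, i.e.\ $p\ge\frac{\A+1}{3}$, which excludes the range $\frac{\A-1}{2}<p<\frac{\A+1}{3}$ covered by the theorem when $\A$ is close to $5$. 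This is exactly why the paper switches in Case 2 to the bilinear form, splitting $N(u)$ as $|u|^{\A-1}\cdot u$ with $|u|^{\A-1}\in L^{4/3}_{[0,T]}(L^2)$ and the remaining factor controlled by $\|U^{-1}u\|_{L^{\infty}(L^p)}$ (and why $r$ in (\ref{defnr}) becomes $2(\A-1)$ there). You correctly identify (c) as the crux, but as formulated your uniform reduction of the nonlinearity to $T^{\theta}\|u\|^{\A}_{L^q(L^r)}$ cannot be carried out on the whole claimed range of $(\A,p)$; keeping one factor in the twisted $L^{\infty}(L^p)$ norm is an essential missing ingredient, not an implementation detail.
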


\bigskip
The above functional framework works well in the case of
critical nonlinearities and yields the wanted decay estimates
for global solutions for small $L^p$-data along with their existence:

\begin{thm} (Small data global well-posedness in critical $L^p$-spaces with $L^{p'}$-$L^p$ decay estimates)\label{SGWP}\\
Assume that
\begin{equation*}
4<\A<5.
\end{equation*}
 Let 
\begin{equation*}
p=\frac{\A-1}{2}
\end{equation*}
and let $q$ be determined by
\begin{equation*}
\frac{2}{q}+\frac{1}{2(\A-1)}=\frac{1}{p}.
\end{equation*}
Then there is a sufficiently small positive $\epsilon>0$ such that:
for any $\phi \in L^p$ with $\|\phi \|_{L^p}<\varepsilon$ there is 
a unique global solution $u$ to (\ref{NLS}) such that
\begin{equation*}
u \in C_{\CS}(\R ; L^p (\R)) \cap L^q(\R ; L^{2(\A-1)} (\R))
\end{equation*}
and
\begin{equation}
M\triangleq \sup_{t\in \R} \|U(-t)u(t) \|_{L^p} <\infty. \label{mbounded}
\end{equation}
Moreover, $u$ has a smoothing effect in terms of spatial integrability
\begin{equation*}
u|_{(\R\setminus \{0 \}) \times \R} \in C(\R\setminus\{ 0\} \,;\, L^{p'}(\R) ).
\end{equation*}
Furthermore, as an immediate consequence of {\rm(\ref{decay})} and {\rm (\ref{mbounded})},
$L^{p'}$-$L^p$ type decay estimates for the solution are derived:
\begin{equation*}
\|u(t) \|_{L^{p'}}\le (4\pi |t|)^{ -(\frac{1}
{p}-\frac{1}{2}) } \times M,\quad \forall t\neq 0.
\end{equation*}
\end{thm}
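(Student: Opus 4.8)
The plan is to construct the global solution by a contraction mapping argument applied to the Duhamel formulation
\[
\Phi(u)(t) = U(t)\phi + i\int_0^t U(t-s)N(u(s))\,ds,
\]
but carried out globally in time rather than on a finite interval. Because $p=\frac{\A-1}{2}$ is exactly the critical exponent, the scaling identity $\frac{2}{q}+\frac{1}{2(\A-1)}=\frac{1}{p}$ leaves no positive power of the time length to be exploited, so the smallness of $\|\phi\|_{L^p}$ must do all the work of closing the estimates. I would work in a closed ball of radius $R$ (to be fixed) in the complete metric space
\[
X=\Lfra^{\infty}(\R;L^p)\cap L^q(\R;L^{2(\A-1)}),
\qquad
\|u\|_X=\|U(-\cdot)u\|_{L^{\infty}(\R;L^p)}+\|u\|_{L^q(\R;L^{2(\A-1)})},
\]
so that membership in $X$ already encodes both the twisted persistence property and the auxiliary integrability.

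The estimates rest on three ingredients, all being the global-in-time, $L^p$-framework analogues of the weighted linear estimates and the bilinear Duhamel estimates that underlie the local theory. First, the homogeneous term is controlled by the linear weighted estimate $\|U(\cdot)\phi\|_{L^q(\R;L^{2(\A-1)})}\lesssim\|\phi\|_{L^p}$ together with the trivial identity $\|U(-\cdot)U(\cdot)\phi\|_{L^{\infty}(\R;L^p)}=\|\phi\|_{L^p}$, giving $\|U(\cdot)\phi\|_X\lesssim\|\phi\|_{L^p}$. Second, the nonlinearity is estimated pointwise in time by H\"older: since $|N(u)|\le C|u|^{\A}$ by (\ref{power1})--(\ref{power2}) with $v=0$, one has $\|N(u)\|_{L^{q/\A}(\R;L^{2(\A-1)/\A})}\lesssim\|u\|_{L^q(\R;L^{2(\A-1)})}^{\A}$. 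Third, the bilinear Duhamel estimate maps this back into $X$, landing the nonlinearity placed by H\"older exactly in the space on which the Duhamel operator is bounded into $X$. Combining the three yields $\|\Phi(u)\|_X\le C\|\phi\|_{L^p}+C\|u\|_X^{\A}$, and the Lipschitz bound (\ref{power2}) run through the same H\"older-plus-Duhamel scheme gives $\|\Phi(u)-\Phi(v)\|_X\le C(\|u\|_X^{\A-1}+\|v\|_X^{\A-1})\|u-v\|_X$.

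With these in hand the contraction closes in the standard way: choosing $R=2C\|\phi\|_{L^p}$ and then $\epsilon$ so small that $C(2R)^{\A-1}\le\frac12$ forces $\Phi$ to map the ball of radius $R$ into itself and to be a contraction there, whence a unique fixed point $u\in X$, i.e. a unique global solution. The bound (\ref{mbounded}) is then immediate since $M=\|U(-\cdot)u\|_{L^{\infty}(\R;L^p)}\le\|u\|_X<\infty$ is part of the $X$-norm, and continuity $u\in C_{\CS}(\R;L^p)$ follows from the continuity of the Duhamel integral as a map into $C(\R;L^p)$ in the twisted variable. For the smoothing effect I would write $u(t)=U(t)\,[U(-t)u(t)]$ with $U(-t)u(t)\in L^p$ and apply the decay estimate (\ref{decay}): $\|u(t)\|_{L^{p'}}\le(4\pi|t|)^{-(\frac1p-\frac12)}\|U(-t)u(t)\|_{L^p}$, which gives both $u(t)\in L^{p'}$ for $t\neq 0$ and, combined with (\ref{mbounded}), the stated $L^{p'}$-$L^p$ decay; continuity $u\in C(\R\setminus\{0\};L^{p'})$ then follows from the $L^p$-continuity of $U(-t)u(t)$ and the continuity of the propagator as a map $L^p\to L^{p'}$ away from $t=0$.

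The main obstacle is the global-in-time bilinear Duhamel estimate in the third ingredient. In the usual $L^2$ Strichartz theory this is routine, but here $p\neq 2$ and the free propagator is unbounded on $L^p$, so the estimate cannot be read off from a dual pairing; it must be established in the twisted norm via the weighted estimates of the preceding sections and, crucially, with a constant \emph{uniform in time}, since criticality removes any decaying time factor that could absorb a loss. The hypothesis $4<\A<5$ (equivalently $\tfrac32<p<2$) is precisely what keeps $p$ close enough to $2$ for these weighted $L^p$ estimates to hold, which is why the argument is confined to that range.
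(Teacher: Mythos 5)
Your proposal is correct and follows essentially the same route as the paper: a global-in-time contraction in the twisted space $\mathfrak{L}^{\infty}(\R;L^p)\cap L^q(\R;L^{2(\A-1)})$, with the homogeneous generalized Strichartz estimate, the inhomogeneous Strichartz estimate for the $L^q(L^r)$ component, and the bilinear Duhamel estimate (\ref{BLWDFS}) for the twisted $L^p$ component, where criticality makes the time factor $T^{1-\frac{\A-1}{2p}}$ disappear so that smallness of $\|\phi\|_{L^p}$ closes the scheme, and the smoothing and decay follow from (\ref{decay}) applied to $U(-t)u(t)$. You also correctly identify that the constraint $4<\A<5$ (i.e.\ $\tfrac32<p<2$, hence $p>4/3$) is exactly what makes the bilinear Duhamel estimate available with a constant uniform in time, which is the same mechanism the paper uses.
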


\bigskip
Finally, we give global well-posedness results for large $L^p$-data in the case of
gauge invariant nonlinearity:
\begin{thm}\label{LGWP} (Large data global well-posedness for the gauge-invariant equation)
Assume that $N(u)=|u|^{\A-1}u$ with $3 \le \A <5$ and
\begin{equation*}
\max \left( \frac{(\A-1)(\A+3)}{2\A^2+2\A-4}, \frac{(\A-1)(3\A+5)}{2(\A^2-2\A+5)} \right)<p\le 2.
\end{equation*}
Let $r=2(\A-1)$ and let $q$ be determined by (\ref{thmscaling}).
Then the local solution to (\ref{NLS}) for $\phi \in L^p$ given by Theorem \ref{LWP}
extends to a unique global solution such that
\begin{equation*}
u\in C_{\CS} (\R ; L^p (\R)) \cap L^q_{loc}(\R ;L^r(\R) ).
\end{equation*}
Moreover, the global solution has a smoothing effect in terms of spatial integrability at any large time $t\neq 0$:
\begin{equation}
u|_{(\R\setminus \{ 0\} ) \times \R} \in C(\R\setminus \{ 0\} \,; \, L^{p'}(\R)\,).
\end{equation}
\end{thm}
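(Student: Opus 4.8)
The plan is to construct the global solution by a frequency-space splitting of the data in the spirit of Vargas--Vega \cite{VV}, combined with the twisted $L^p$ machinery behind Theorem~\ref{LWP}. First I would decompose $\phi\in L^p$ in Fourier space at a threshold $\lam>0$: set $\widehat{\phi_1}=\hat\phi\,\chi_{\{|\hat\phi|>\lam\}}$ and $\widehat{\phi_2}=\hat\phi\,\chi_{\{|\hat\phi|\le\lam\}}$. Since $\hat\phi\in L^{p'}$ with $p'>2$, a direct computation gives $\widehat{\phi_1}\in L^2$ (so $\phi_1\in L^2$, with norm possibly large and growing as $\lam\dto 0$), while on the low-amplitude set one has, for the critical exponent $p_c=\frac{\A-1}{2}$,
\begin{equation*}
\|\widehat{\phi_2}\|_{L^{p_c'}}^{p_c'}\le \lam^{p_c'-p'}\,\|\hat\phi\|_{L^{p'}}^{p'}\longrightarrow 0\quad(\lam\dto 0),
\end{equation*}
so that by Hausdorff--Young $\|\phi_2\|_{L^{p_c}}$ is as small as we wish once $\lam$ is chosen small. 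Both $\phi_1,\phi_2$ lie in $L^p$.

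Next I would solve (\ref{NLS}) with the $L^2$-datum $\phi_1$ globally by Proposition~A~(iii), obtaining $v\in C(\R;L^2)$ with finite Strichartz norms $\|v\|_{L^{q_0}_I(L^{r})}$ on every compact interval $I$, where $r=2(\A-1)$ is $L^2$-admissible; since the exponent $q$ of the theorem satisfies $q\le q_0$, H\"older on $I$ gives $v\in L^q_{loc}(\R;L^r)$. I would then look for $u=v+w$, so that $w$ solves the difference equation
\begin{equation*}
w(t)=U(t)\phi_2+i\int_0^t U(t-s)\big[N(v+w)-N(v)\big](s)\,ds .
\end{equation*}
Using (\ref{power2}) one has $|N(v+w)-N(v)|\lesssim(|v|^{\A-1}+|w|^{\A-1})|w|$, and I would set up a contraction for $w$ in $C_{\CS}(I;L^p)\cap L^q(I;L^r)$ exactly as in Theorem~\ref{LWP}, feeding the small datum $U(t)\phi_2$ and the same linear weighted and bi-linear Duhamel estimates in $\Lp$-spaces. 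The pure-$w$ contributions are handled by the smallness of $\|w\|$ as in the critical small-data scheme of Theorem~\ref{SGWP}; the new feature is the mixed terms, schematically $|v|^{\A-1}w$ down to $|v|\,|w|^{\A-1}$, which couple $w$ (measured in the twisted $L^p$ norm) to $v$ (measured in $L^2$-Strichartz norms).

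The main obstacle is precisely the control of these mixed terms over long times: $v$ is not small, so they cannot be absorbed by the contraction on a fixed interval, and there is no conservation law to bound the twisted norm $\|U(-t)w(t)\|_{L^p}$. I would overcome this by a finite-partition argument: given any $T^\ast>0$, split $[0,T^\ast]$ into finitely many subintervals $I_j$ on each of which $\|v\|_{L^q_{I_j}(L^r)}\le\eta$ for a small $\eta$ --- possible because $v\in L^q_{[0,T^\ast]}(L^r)$. On each $I_j$ the coefficient of the linear-in-$w$ coupling is then $\lesssim\eta^{\A-1}$ and the contraction closes with datum $w$ at the left endpoint, yielding a bounded multiplicative increment of $\sup_{t\in I_j}\|U(-t)w(t)\|_{L^p}$; summing the finitely many increments keeps this norm finite on $[0,T^\ast]$, so no finite-time blow-up occurs and $w$ --- hence $u=v+w$ --- is global. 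The admissibility of the multilinear estimates matching the twisted-$L^p$ scaling of $w$ against the $L^2$-Strichartz scaling of $v$ is what forces $3\le\A<5$ and the stated lower bounds on $p$, the two bounds arising from the two extreme mixed terms.

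Finally I would record the remaining assertions. That $u\in C_{\CS}(\R;L^p)$ follows since $U(-t)w(t)\in C(\R;L^p)$ by construction, while $U(-t)v(t)=\phi_1+i\int_0^t U(-s)N(v(s))\,ds$ lies in $C(\R;L^p)$ by applying the same twisted Duhamel estimate of Theorem~\ref{LWP} to $N(v)=|v|^{\A-1}v$, using the finite Strichartz norms of $v$ and $\phi_1\in L^p$. Uniqueness on $\R$ follows by patching the local uniqueness of Theorem~\ref{LWP}, which also identifies $u$ with the local solution there. The smoothing effect is then immediate: for $t\neq0$, writing $u(t)=U(t)\big[U(-t)u(t)\big]$ and using the linear decay estimate (\ref{decay}) gives $\|u(t)\|_{L^{p'}}\le(4\pi|t|)^{-(\frac1p-\frac12)}\|U(-t)u(t)\|_{L^p}<\infty$, and continuity into $L^{p'}$ away from $t=0$ follows from the $L^p$-continuity of $t\mapsto U(-t)u(t)$ together with the continuity of $U(t):L^p\to L^{p'}$ for $t\neq0$.
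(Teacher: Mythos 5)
Your overall strategy --- an amplitude splitting of $\hat\phi$ into an $L^2$ piece $\phi_1$ and a small remainder $\phi_2$, followed by a perturbative equation for $w=u-v$ --- is the same Vargas--Vega mechanism that the paper uses to produce a global solution in $L^q_{loc}(L^r)$ (Proposition \ref{gexisprop1} and the appendix). But your iteration has a gap, and the actual content of the theorem --- the global twisted persistence --- is asserted rather than proved. On each subinterval $I_j$ your contraction must absorb the pure term $|w|^{\A-1}w$, contributing $|I_j|^{1-\frac{\A-1}{2p}}\|w\|^{\A}$; since the left-endpoint datum $\|U(-t_j)w(t_j)\|_{L^p}$ is multiplied by a fixed constant $>1$ at each step, the admissible lengths $|I_j|$ shrink geometrically and their sum can converge before reaching $T^{\ast}$. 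The splitting scheme in the paper's appendix avoids exactly this by \emph{redecomposing} at each step: the Duhamel part of $w$ lies in $L^2$ and is absorbed back into the $L^2$ component $v$, so the non-$L^2$ remainder is always the free wave $U(t)\psi_N$, whose global space-time norm is small once and for all; no superlinear term in a growing quantity ever appears. (Also, Hausdorff--Young gives smallness of $\|\phi_2\|_{\widehat{L}^{p_c}}$ with $p_c=\frac{\A-1}{2}$, not of $\|\phi_2\|_{L^{p_c}}$; fortunately the hat-space norm is what feeds the generalized Strichartz estimate (\ref{GFS}).)

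More seriously, $u\in C_{\CS}(\R;L^p)$ for \emph{all} time cannot be obtained "by construction" plus "the same twisted Duhamel estimate": for $3\le\A<5$ the relevant bilinear estimate (\ref{BLWDFS}) bounds $\|\int_0^tU(-s)N(u)\,ds\|_{L^p}$ by $\|u\|_{L^q(L^{2\A-2})}^{\A-1}\,\|U^{-1}u\|_{L^{\infty}(L^p)}$, so using it to control $\sup_t\|U(-t)u(t)\|_{L^p}$ is circular. The paper's key new step (Proposition \ref{GWPKEY}) breaks this circle by taking the exponent $\gamma<\infty$ in Lemma \ref{apriorilem}, which turns the estimate into a Gronwall inequality for $\omega(t)=\|U(-t)u(t)\|_{L^p}^{\gamma}$; combined with the blow-up alternative of Lemma \ref{blowupalt} this propagates the twisted persistence from a small interval $[0,\varepsilon)$, supplied by Theorem \ref{LWP}, to all of $[0,\infty)$. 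This a priori/Gronwall mechanism is entirely missing from your proposal, and it is where $\A\ge 3$ genuinely enters (so that a finite $\gamma\in[\frac{4}{5-\A},\infty)$ is available); the stated lower bounds on $p$ come from the splitting global existence result (Lemma \ref{pdecomp}, Proposition \ref{gexiscor2}), not from the mixed terms of your difference equation.
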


\begin{rem}
One natural generalization of the classical, $L^2$-based Sobolev space $H^s$ to the $L^p$-setting is the Bessel potential space $H^s_p$ defined by
\begin{equation*}
H^s_p(\R^n) \triangleq \{ \varphi \in \mathcal{S}'(\R)\,| \, \mathcal{F}^{-1}[ (1+|\xi|^2)^{\frac{1}{2}} \widehat{\varphi}] \in L^p(\R^n)\,\}.
\end{equation*}
So one may wonder if similar local results hold for $\phi \in H^s_p(\R)$ for suitable $s,p,\A$.  Perhaps, this is possible if we adopt the Lebesgue-Besov spaces $L^q(I : B^s_{r,\theta})$ as
 auxiliary space and
argue as in Cazenave-Weissler \cite{CW}. This is another advantage of the Strichartz technique.  In fact, it is not easy to obtain
similar well-posedness results via Zhou's functional framework in \cite{Zhou}.  Since
the space $H^s_p$ is not obtained from complex interpolation between $L^1$ and $H^s(=H^s_2)$ (see \cite{BL}), 
one cannot get key estimates for $H^s_p$ by just replacing $L^2$ estimates with $H^s$ estimates
in the argument in \cite{Zhou}.
\end{rem}

\bigskip

\noindent\textbf{Other related results}.\quad We here present some other results related to the main theorems above.

\if0
******************************************************************
\begin{thm}
Assume that $4/3<p\le 2$.  Let $u$ be a global solution to (\ref{NLS}) such that
\begin{equation*}
u\in L^q(\R ;L^r)
\end{equation*}
and
\begin{equation*}
u|_{(-\varepsilon_{-},\varepsilon_{+})\times \R} \in C_{\CS}( (-\epsilon_{-},\epsilon_{+}) ;
L^p)
\end{equation*}
for some $\epsilon_{\pm}>0$.
Then
\begin{equation*}
u\in C_{\CS}(\R ;L^p).
\end{equation*}
Moreover,
\begin{equation*}
M\triangleq \sup_{t\in \R} \|U(-t)u(t)\|_{L^p} <\infty
\end{equation*}
and the solution obeys the 
\end{thm}

*******************************************************************
\fi
By the standard argument of  Cauchy sequence, we easily deduce the existence of the scattering state for the small solution in Theorem \ref{SGWP}:
\begin{cor}\label{scatter}
Assume that $4<\A<5$ and 
\begin{equation*}
p=\frac{\A-1}{2}.
\end{equation*}
Let $u$ be the global solution for small $\phi \in L^p$ given by Theorem {\rm\ref{SGWP}}.  Then 
there exist $\phi_{\pm}\in L^p$ such that
\begin{equation}
\lim_{t\to\pm \infty}\|U(-t)u(t) -\phi_{\pm} \|_{L^p}=0.
\end{equation}

\end{cor}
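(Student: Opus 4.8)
The plan is to exhibit the scattering states explicitly through the Duhamel representation and then verify $L^p$-convergence by a Cauchy criterion, exactly as the phrase ``standard argument of Cauchy sequence'' in the statement suggests. Throughout, $u$ denotes the global solution supplied by Theorem \ref{SGWP}, so that $u \in C_{\CS}(\R;L^p)\cap L^q(\R;L^{2(\A-1)})$ and $M<\infty$ in (\ref{mbounded}).

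First I would record the integral (Duhamel) identity satisfied by $u$. Applying $U(-t)$ to the integral form of (\ref{NLS}) gives
\begin{equation*}
U(-t)u(t) = \phi + i\int_0^t U(-s)N(u(s))\,ds,
\end{equation*}
which already identifies the natural candidates
\begin{equation*}
\phi_{\pm} \triangleq \phi + i\int_0^{\pm\infty} U(-s)N(u(s))\,ds,
\end{equation*}
provided these improper integrals converge in $L^p$. Next, for $0<t_1<t_2$ I would write the difference as a pure tail,
\begin{equation*}
U(-t_2)u(t_2) - U(-t_1)u(t_1) = i\int_{t_1}^{t_2} U(-s)N(u(s))\,ds,
\end{equation*}
and estimate its $L^p$-norm by the very same linear weighted (Duhamel-type) estimate that underlies the bound (\ref{mbounded}), now applied over the subinterval $[t_1,t_2]$ in place of $\R$. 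Combined with the pointwise nonlinear bound $|N(u)|\le C|u|^{\A}$, which follows from (\ref{power1})--(\ref{power2}) on setting $v=0$, this produces a control of the form
\begin{equation*}
\left\| \int_{t_1}^{t_2} U(-s)N(u(s))\,ds \right\|_{L^p} \le C\, \|u\|_{L^q([t_1,t_2];L^{2(\A-1)})}^{\theta}\,\|u\|_{L^q(\R;L^{2(\A-1)})}^{\A-\theta},
\end{equation*}
for some exponent $\theta>0$, i.e. a bound carrying at least one positive power of the \emph{tail} norm over $[t_1,t_2]$.

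Because $u$ lies in $L^q(\R;L^{2(\A-1)})$ \emph{globally}, the tail $\|u\|_{L^q([t_1,\infty);L^{2(\A-1)})}$ tends to $0$ as $t_1\to\infty$; hence the right-hand side tends to $0$, and $\{U(-t)u(t)\}$ is Cauchy in $L^p$ as $t\to+\infty$. By completeness of $L^p$ it converges to a limit, necessarily equal to the candidate $\phi_+\in L^p$ above, so that $\|U(-t)u(t)-\phi_+\|_{L^p}\to0$. Running the identical computation with $t_1,t_2\to-\infty$ yields $\phi_-$, which completes the proof.

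The only genuinely non-formal point to verify is that the linear weighted / Duhamel estimate established for Theorem \ref{SGWP} is \emph{uniform in the endpoints} $t_1,t_2$ — that is, its constant $C$ does not degenerate as the interval recedes to infinity. This holds because that estimate is invariant under time translation (equivalently, localizes to an arbitrary subinterval with a constant independent of the interval), so the decay of the global $L^q(L^{2(\A-1)})$-tail transfers directly to decay of the $L^p$-difference. Everything else is the routine completeness argument, which is why the corollary is genuinely immediate once the global estimates of Theorem \ref{SGWP} are in hand.
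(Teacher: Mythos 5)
Your proposal is correct and follows essentially the same route as the paper: write $U(-t_2)u(t_2)-U(-t_1)u(t_1)$ as the tail Duhamel integral, bound it via the bilinear Duhamel estimate (the paper uses (\ref{BLWDFS}) in the precise form $C\|u\|_{L^{q}_{[t_1,t_2]}(L^{2(\A-1)})}^{\A-1}\|U^{-1}u\|_{L^{\infty}(L^p)}$, while you write the splitting of the $\A$ powers slightly differently, but the mechanism of a vanishing tail norm is identical), and conclude by the Cauchy criterion in $L^p$.
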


\if0
*************************************************************
\medskip
The Hausdorff--Young type estimate holds for solutions with large $L^p$-data at least on a finite time interval:

\begin{cor}
Let $u:[0,T_{\max})\times \R \to \C$ be a solution to (\ref{NLS})
with $u(0)=\phi\in L^p$ given by either Theorem \ref{LWP} or Theorem \ref{GWP}.  Then there exists $T_0 (0,T_{\max})$ such that
\begin{equation*}
\|u(t)\|_{L^{p'}} \le C|t|^{-(\frac{1}{p}-\frac{1}{2})} \|\phi \|_{L^p}
\end{equation*}
for any $t (0,T_0)$.

\end{cor}
********************************************************************
\fi

\medskip
Finally, we present a global well-posedness result for (\ref{NLS}) with $N(u)=|u|^{\A-1}u$ in $\Lp$-spaces.  Since the $\Lp$-spaces enjoy the unitarity property
\begin{equation*}
\|U(t)\phi \|_{\Lp}=\|\phi\|_{\Lp},\quad \forall t\in\R,
\end{equation*}
local well-posedness in $\Lp$ with the usual persistence property is expected.  Several results on the local existence of (\ref{NLS}) for $\phi \in \Lp$
have been reported.  By an argument similar to the proof of Theorem \ref{LGWP}, we can prove the existence of global solutions with persistence property
for any large time.

\begin{thm}(Large data global well-posedness in $\Lp$) \label{hatglobal}
Assume that $\A>2$ and the nonlinearity is of type $N(u)=|u|^{\A-1}u$ and
\begin{equation*}
2\le p <\min \left(\A+1, \frac{3\A+5}{2\A}\right).
\end{equation*}
 Let $q$ be determined by the relation
\begin{equation*}
\frac{2}{q}+\frac{1}{\A+1}=\frac{1}{p}.
\end{equation*}
Then for any $\phi \in \Lp$ there exists a unique global solution $u$ such that
\begin{equation*}
u\in C(\R ;\Lp(\R))\cap L^q(\R ; L^{\A+1} (\R)).
\end{equation*}

\end{thm}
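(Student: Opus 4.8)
The plan is to mirror the proof of Theorem~\ref{LGWP}: first solve (\ref{NLS}) locally in $\Lp$ by a contraction argument built on the (hat-)Strichartz estimates underlying Theorems~\ref{LWP}--\ref{LGWP}, and then globalize by a \emph{Fourier-side} splitting of the data that isolates a large $L^2$ component (evolved by the conservative $L^2$-flow) from an $\Lp$-small remainder treated perturbatively.

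First I would set up the local theory in $\Lp$. Since $\widehat{U(t)\phi}=e^{-it\xi^2}\widehat\phi$, the propagator is unitary on $\Lp$, so $C_{\CS}(I;\Lp)=C(I;\Lp)$ and the persistence is the ordinary one. Writing the Duhamel formulation
\begin{equation*}
u(t)=U(t)\phi+i\int_0^t U(t-s)\,|u(s)|^{\A-1}u(s)\,ds,
\end{equation*}
I would run a fixed point in $X_T=C([0,T];\Lp)\cap L^q([0,T];L^{\A+1})$, using the homogeneous estimate $\|U(t)\phi\|_{L^q_I(L^{\A+1})}\lesssim\|\phi\|_{\Lp}$, the dual (inhomogeneous) estimate for the Duhamel operator, and the pointwise bound $\big||u|^{\A-1}u-|v|^{\A-1}v\big|\lesssim(|u|^{\A-1}+|v|^{\A-1})|u-v|$ together with space-time H\"older. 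The relation $2/q+1/(\A+1)=1/p$ and the bound $p<\min(\A+1,(3\A+5)/(2\A))$ are exactly what render the H\"older exponents admissible and produce a contraction, giving a local solution on an interval of length $T\sim\|\phi\|_{\Lp}^{-\theta}$, $\theta>0$, with uniqueness in $X_T$ and Lipschitz dependence.

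For the global step I would split on the Fourier side. Given $\phi\in\Lp$, i.e.\ $\widehat\phi\in L^{p'}$ with $p'\le2$, truncate $\widehat\phi=\widehat\phi\,\mathbf{1}_{\{|\widehat\phi|\le\lambda\}}+\widehat\phi\,\mathbf{1}_{\{|\widehat\phi|>\lambda\}}=:\widehat\psi+\widehat w$. Since $\int_{|\widehat\phi|\le\lambda}|\widehat\phi|^2\le\lambda^{2-p'}\|\widehat\phi\|_{L^{p'}}^{p'}$ we get $\psi\in L^2$, while $\|w\|_{\Lp}^{p'}=\int_{|\widehat\phi|>\lambda}|\widehat\phi|^{p'}\to0$ as $\lambda\to\infty$; thus $\phi=\psi+w$ with $\psi\in L^2$ (possibly large) and $\|w\|_{\Lp}=\delta$ as small as desired. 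Note the nonempty range of $p$ forces $\A<5$, so by Proposition~A(iii) the data $\psi$ launches a global $L^2$-solution $u_1$ with conserved charge $\|u_1(t)\|_{L^2}=\|\psi\|_{L^2}$ and (by the standard $L^2$ theory) finite $L^2$-Strichartz norms on every bounded interval. Seeking $u=u_1+v$, the remainder solves
\begin{equation*}
v(t)=U(t)w+i\int_0^t U(t-s)\big[\,|u_1+v|^{\A-1}(u_1+v)-|u_1|^{\A-1}u_1\,\big](s)\,ds,
\end{equation*}
which I would solve for $v\in C([0,T];\Lp)\cap L^q([0,T];L^{\A+1})$ on an arbitrary bounded interval.

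The main obstacle is precisely the control of $v$ up to arbitrarily large times, and it is twofold. The difference nonlinearity couples the $L^2$-admissible norms of $u_1$ with the $\Lp$-admissible norms of $v$, so the decisive estimates are mixed multilinear bounds for the terms $|u_1|^{\A-1}|v|$ and $|v|^{\A}$ in the Duhamel operator; here the hypotheses $\A>2$ and $p<(3\A+5)/(2\A)$ are used to close the space-time H\"older exponents. Moreover, since the Strichartz norms of $u_1$ only grow on bounded intervals rather than remaining globally small, the smallness of $\delta$ alone does not by itself reach all times. I would handle this by partitioning $[0,T]$ into finitely many subintervals $I_1,\dots,I_J$ on each of which $\|u_1\|_{L^{q_2}(I_j;L^{r_2})}$ lies below a fixed threshold $\eta$ (for an $L^2$-admissible pair $2/q_2+1/r_2=1/2$), then running the perturbative fixed point successively on $I_1,I_2,\dots$ with $v$ at the right endpoint of $I_{j-1}$ as new data; each step multiplies the $\Lp$-size of $v$ by a fixed factor and $J=J(T)<\infty$, so $v$ stays finite on $[0,T]$. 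As $T$ is arbitrary, patching the pieces yields a global solution $u=u_1+v\in C(\R;\Lp)$ whose $L^q(I;L^{\A+1})$ norm is finite on every bounded interval $I$, and uniqueness in this class follows from the short-interval contraction estimates.
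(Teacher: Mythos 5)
Your proposal is correct in outline but follows a genuinely different route from the paper for the crucial globalization step. The paper does not re-run the splitting argument to propagate the $\Lp$ norm: it quotes the local well-posedness and the global existence in the space-time Lebesgue class $C(\R;L^2)\cap L^{4(\A+1)/(\A-1)}_{loc}(L^{\A+1})+L^{Q}_{loc}(L^{\A+1})$ from \cite{107T} (Proposition \ref{Hatexistence}), and then obtains the global persistence $u\in C(\R;\Lp)$ by a separate a priori argument (Proposition \ref{hatapriori}): the dual hat-Strichartz bound of Corollary \ref{DFS} gives
\begin{equation*}
\Bigl\| \int^t_0 U(t-\tau)(|u|^{\A-1}u)\,d\tau \Bigr\|_{\Lp}\le C\,\|u\|_{L^{\frac{4(\A-1)}{\A-2}}_{[0,t]}(L^{2(\A-1)})}\,\|u\|_{L^{\frac{4}{5-\A}}_{[0,t]}(\Lp)},
\end{equation*}
the first factor is finite on bounded intervals by a Strichartz bootstrap from the known $L^{4(\A+1)/(\A-1)}_{loc}(L^{\A+1})$ regularity, and Gronwall plus a blow-up alternative then force $\widehat T_{\max}=\infty$. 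This buys a clean separation: the splitting machinery only ever has to control space-time Lebesgue norms (where it is standard), and the $\Lp$ norm is controlled a posteriori by a linear-in-$\|u\|_{\Lp}$ integral inequality, with no need to track $\Lp$ smallness through the iteration. Your scheme instead threads the $\Lp$ norm of the remainder $v$ through the interval-by-interval perturbative argument; this can be made to work, but it is the delicate part of your plan and you leave its two load-bearing ingredients unverified: (a) the mixed bilinear Duhamel estimates in $\Lp$ coupling the $L^2$-admissible norms of $u_1$ with the $\Lp$-admissible norms of $v$ (these must be checked against the admissibility window $p'>4/3$ of $\widehat{\mathscr S}(p')$ in Corollary \ref{DFS}), and (b) the bookkeeping issue that the subinterval lengths needed to absorb the self-interaction $|v|^{\A}$ depend on the multiplicatively growing size of $v$, so that for each target time $T$ the truncation level $\lambda$ (hence $\delta=\|w\|_{\Lp}$) must be chosen depending on $T$, with uniqueness used to glue the resulting solutions — exactly as in the appendix proof of Proposition \ref{gexisprop1}. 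The paper's Gronwall route sidesteps both points, which is precisely why it is organized that way.
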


\medskip
\noindent\textbf{Strategy}.
As mentioned above, we consider (\ref{NLS}) for data in the mere $L^p$-space and no other
additional assumption on data.  Therefore, the Cauchy problem (\ref{NLS}) should be discussed in the pure $L^p$-framework.  We seek the solution of the corresponding integral equation 
\begin{equation}
u(t)=U(t)\phi +\int^t_0 U(t-s) N(u(s)) ds. \label{introint}
\end{equation}
In the setting of $L^2$ and $H^s$, the so-called Strichartz inequalities are standard tools to estimate (\ref{introint}).  Recall that $(q,r)$ is admissible if $r\ge 2$ and
\begin{equation*}
\frac{2}{q}+\frac{1}{r}=\frac{1}{2}.
\end{equation*}
The well-known Strichartz estimate in the case of one space dimension are of the form
\if0
***********************************************
It is well known that, in the setting of $L^2$ and $H^s$, the so-called Strichartz inequalities are very useful to estimate the right hand side of the integral equation.  These are of the form
*************************************************
\fi
\begin{equation}
\|U(t) \phi \|_{L^{\beta}(\R;L^{\kappa}(\R))} \le C\|\phi \|_{L^2(\R)} \label{L2Str}
\end{equation}
and
\begin{equation}
\left\| \int^t_0 U(t-s) F(s) ds \right\|_{L^{\beta}(\R; L^{\kappa}(\R))} \le C\|F\|_{L^{\sigma}(\R; L^{\rho}(\R))}. \label{L2Strh},
\end{equation}
which hold true if $(\beta, \kappa)$ and $(\sigma',\rho')$ are admissible.  Local well-posedness of (\ref{NLS}) in $L^2$-space can be proved by the contraction mapping principle
after estimating the right hand side of (\ref{introint}) using (\ref{L2Str})--(\ref{L2Strh}).  So when one tries to solve (\ref{NLS}) for $\phi \in L^p,\,p\neq 2$, it is quite natural to wonder if estimates similar to (\ref{L2Str}) and (\ref{L2Strh}) hold in 
the $L^p$-setting.  Actually the estimate
\begin{equation}
\|U(t)\phi \|_{L^q(L^r)} \le C\|\phi \|_{L^p} \label{introStrp}
\end{equation}
is true for $p<2$ and some $q,r$ satisfying $2/q+1/r=1/p$.  However, these estimates are not sufficient in order to achieve our goal.  The main difficulty is the property (\ref{ipersistency}) cannot be obtained only from (\ref{introStrp}) --(\ref{L2Strh}).

The key idea of the paper is to exploit another generalized Strichartz type estimate
\begin{equation}
\|U(t)\phi \|_{L^q(L^r)} \le C\|\phi \|_{\Lp} \label{introFS}
\end{equation}
which is stronger than (\ref{introStrp}).   We do not need this inequality to estimate the linear part of the integral equation but by duality, it is equivalent to an $L^p$-estimate for Duhamel type operator, which then leads to a key bilinear type estimate.  Consequently, the billnear Duhamel estimate derived essentially from (\ref{introFS})
yields the basic nonlinear estimate
\begin{equation}
\sup_{t\in [0,T]} \left\|\int^t_0 U(-s)N(u(s))ds \right\|_{L^p} \le C\|u\|_{L^q_{[0,T]}(L^r)}^{\A-1} \|U^{-1}u\|_{L^{\gamma}_{[0,T]}(L^p)}. \label{introbilinear}
\end{equation}
which is valid for $p>4/3$ and some suitable choices of $q,r,\gamma$.  In particular,
taking $\gamma=\infty$ is allowed and thus, combining (\ref{introbilinear}) with (\ref{introStrp}) and (\ref{L2Strh})
, we expect that the fixed point of the integral equation could be found in the space
\begin{equation*}
\{u \,|\, U^{-1}u \in L^{\infty}([0,T] ; L^p) \,\} \cap L^q([0,T] ;L^r).
\end{equation*}
 The bilinear estimate (\ref{introbilinear}) play a fundamental role throughout the paper not only
for the proof of the local well-posedness result.  Indeed, taking $\gamma<\infty$
provides a kind of a priori estimate and combined with earlier global existence 
results, we get a global solution $u$ such that 
\begin{equation*}
U(-t)u(t)\in C(\R ;L^p)
\end{equation*}
even for large data $\phi\in L^p$ in the case of gauge-invariant nonlinearities.

\section{Key estimates}\label{SectionKey}
In this paper, we construct a solution to (\ref{NLS}) in space-time Lebesgue spacees $L^q(L^r)$.  To this end
we here gather various generalizations of the standard Strichartz estimates used in the usual $L^2$-framework.
\subsection{Strichartz type estimates}
We first present generalizations the estimate (\ref{L2Str}) for the solution to the homogeneous equation
\begin{equation*}
iu_t +u_{xx}=0,\qquad u|_{t=0}=\phi.
\end{equation*}
We first introduce some sets of pairs of exponents to describe these generalized estimates.

\begin{defn}
Let $1\le  p \le 2$.  \begin{enumerate}\item
$\mathscr{S}(p)$ is the set of all pairs of exponents $(q,r)$ such that
\begin{equation}
\frac{2}{\beta}+\frac{1}{\kappa}=\frac{1}{p} \label{scaling}
\end{equation}
and
\begin{equation*}
0<\frac{1}{\kappa}<\frac{1}{2},\quad 0<\frac{1}{\beta}<\frac{1}{2}-\frac{1}{\kappa}.
\end{equation*}
\item
$\widehat{\mathscr{S}}(p)$ is the set of all pairs of exponents $(\beta ,\kappa)$ such that
$(\beta ,\kappa) \in \mathscr{S}(p)$ and
\begin{equation*}
0<\frac{1}{\kappa}<\frac{1}{2},\quad 0<\frac{1}{\beta}<\min \left(\frac{1}{2}-\frac{1}{\kappa},\frac{1}{4} 
\right).
\end{equation*}
or $(\beta, \kappa)=(4,r),\,r>4$.
\end{enumerate}
\end{defn}

In this paper we exploit two types of generalization of the homogeneous Strichartz estimate (\ref{L2Str}).  The first one
is the estimate of the form (\ref{L2Str}) with $\|\phi\|_{L^2}$ in the right hand side being replaced by $\|\phi\|_{L^p},\,p\le 2$, which seems
a natural extension of (\ref{L2Str}) into the $L^p$-framework.

\begin{prop}
Let $1<p\le 2$.  For any $(\beta,\kappa) \in \mathscr{S}(p)$ the estimate
\begin{equation}
\|U(t) \phi \|_{L^{\beta}(L^{\kappa})} \le C\|\phi \|_{L^p} \label{StrLp}
\end{equation}
holds true.
\end{prop}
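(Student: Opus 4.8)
The plan is to obtain \eqref{StrLp} by interpolating two estimates that are already available: the classical $L^2$-Strichartz estimate \eqref{L2Str} and a weak-type endpoint estimate extracted from the pointwise-in-time decay bound \eqref{decay}. The geometric picture guiding this is that $\mathscr{S}(p)$ is an \emph{open} segment of the scaling line $\frac{2}{\beta}+\frac{1}{\kappa}=\frac{1}{p}$, so it ought to be reachable as a set of interior interpolation exponents lying between a strong estimate on the line $\frac{2}{\beta}+\frac{1}{\kappa}=\frac{1}{2}$ and a (merely weak-type) estimate sitting on the boundary of the $p$-scaling region.

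First I would record the weak-type endpoint. Fix $a\in(1,2)$ and apply \eqref{decay} with $1\le a\le 2$ to get $\|U(t)\phi\|_{L^{a'}}\le C|t|^{-(\frac{1}{a}-\frac{1}{2})}\|\phi\|_{L^{a}}$. Since $t\mapsto |t|^{-(\frac{1}{a}-\frac{1}{2})}$ belongs to the Lorentz space $L^{b,\infty}(\R)$ with $b\triangleq(\frac{1}{a}-\frac{1}{2})^{-1}$, this yields at once the mixed-norm weak-type bound $\|U(t)\phi\|_{L^{b,\infty}(L^{a'})}\le C\|\phi\|_{L^{a}}$, whose exponent pair $(b,a')$ satisfies $\frac{2}{b}+\frac{1}{a'}=\frac{1}{a}$. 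At the degenerate value $a=2$ this is just the $L^2$ energy identity $\|U(t)\phi\|_{L^{\infty}(L^{2})}=\|\phi\|_{L^{2}}$.

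Next comes the interpolation. Given a target $(\beta,\kappa)\in\mathscr{S}(p)$ I would pick an admissible pair $(\beta_0,\kappa_0)$, so that $\frac{2}{\beta_0}+\frac{1}{\kappa_0}=\frac{1}{2}$ and \eqref{L2Str} applies, together with an exponent $a\in(1,p)$, and fix $\theta\in(0,1)$ by the data relation $\frac{1}{p}=\frac{1-\theta}{2}+\frac{\theta}{a}$. Interpolating \eqref{L2Str} (domain $L^2$, target $L^{\beta_0}(L^{\kappa_0})$) against the weak-type endpoint (domain $L^{a}$, target $L^{b,\infty}(L^{a'})$) produces $\|U(t)\phi\|_{L^{\beta}(L^{\kappa})}\le C\|\phi\|_{L^{p}}$ with $\frac{1}{\beta}=\frac{1-\theta}{\beta_0}+\frac{\theta}{b}$ and $\frac{1}{\kappa}=\frac{1-\theta}{\kappa_0}+\frac{\theta}{a'}$; summing, the output lands exactly on $\frac{2}{\beta}+\frac{1}{\kappa}=(1-\theta)\frac{1}{2}+\theta\frac{1}{a}=\frac{1}{p}$. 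A short bookkeeping computation then shows that, as $(\beta_0,\kappa_0)$ ranges over admissible pairs and $a$ over $(1,p)$, the resulting $(\beta,\kappa)$ sweep out precisely the open segment $\mathscr{S}(p)$: the choice $a\to p$ (hence $\theta\to1$) drives the target to the corner $\bigl((\tfrac1p-\tfrac12)^{-1},p'\bigr)$, while $a\to1$ together with $\kappa_0\to\infty$ drives it to the opposite corner $(2p,\infty)$.

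The main obstacle is that one of the two endpoints is only of weak type in the time variable, so ordinary complex interpolation does not apply verbatim; instead I would invoke real interpolation of Marcinkiewicz type for the vector-valued mixed-norm Lorentz spaces $L^{b,\infty}(L^{a'})$, which is exactly what upgrades the interpolated bound back to a \emph{strong}-type estimate $L^{\beta}(L^{\kappa})$ once $\theta$ lies strictly inside $(0,1)$. Carrying out this vector-valued real-interpolation step cleanly, and checking that the strict inequalities defining $\mathscr{S}(p)$ are precisely the openness required to keep $(\beta,\kappa)$ away from the two borderline corners $\kappa=\infty$ and $\kappa=p'$ (the latter being where only weak-type survives), is where the genuine work lies; the scaling computations above are routine.
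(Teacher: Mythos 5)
The paper offers no proof of this proposition at all --- it defers entirely to \cite[Theorem 2.1]{Kato} --- so you are supplying an argument where the author supplies a citation. Your overall strategy (interpolate the admissible $L^2$-Strichartz estimate (\ref{L2Str}) against a weak-type time-Lorentz bound extracted from the dispersive estimate (\ref{decay})) is the classical route to these non-admissible estimates and is the right idea. But the key interpolation step, as you have set it up, has a genuine gap: you interpolate $T:L^2\to L^{\beta_0}(L^{\kappa_0})$ against $T:L^{a}\to L^{b,\infty}(L^{a'})$ with $\kappa_0\neq a'$ in general, and you need the real interpolation space of the two \emph{targets} to be $L^{\beta}(L^{\kappa})$. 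Real interpolation does not commute with mixed norms when the inner spaces differ. The closest standard result, the Lions--Peetre theorem \cite[Thm.~5.1.2]{BL}, forces the second interpolation index to equal the resulting outer exponent, so the best identification one could hope for is $\bigl(L^{\beta_0}(L^{\kappa_0}),L^{b,\infty}(L^{a'})\bigr)_{\theta,\beta}=L^{\beta}\bigl((L^{\kappa_0},L^{a'})_{\theta,\beta}\bigr)=L^{\beta}(L^{\kappa,\beta})$; and $L^{\kappa,\beta}$ contains $L^{\kappa}$ with a \emph{smaller} norm whenever $\beta>\kappa$. On the scaling line one checks $\beta>\kappa$ exactly when $\kappa<3p$, and the band $p'<\kappa<3p$ is nonempty whenever $p>4/3$; on that part of $\mathscr{S}(p)$ your argument delivers only the strictly weaker bound in $L^{\beta}(L^{\kappa,\beta})$, not (\ref{StrLp}).

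The repair is to freeze the spatial exponent so that only scalar (time-variable) real interpolation is ever used: given $(\beta,\kappa)\in\mathscr{S}(p)$, take $\kappa_0=a'=\kappa$, i.e.\ interpolate $U:L^2\to L^{\beta_0}(L^{\kappa})$ with $\frac{2}{\beta_0}=\frac12-\frac1\kappa$ against $U:L^{\kappa'}\to L^{b,\infty}(L^{\kappa})$ with $\frac{2}{b}=1-\frac{2}{\kappa}$. Both targets now share the inner space $L^{\kappa}$, the $K$-functional of the couple reduces to that of the scalar couple applied to $t\mapsto\|f(t)\|_{L^{\kappa}}$, and the functor $(\cdot,\cdot)_{\theta,\beta}$ gives $U:L^{p,\beta}\to L^{\beta,\beta}(L^{\kappa})=L^{\beta}(L^{\kappa})$ with $\frac1p=\frac{1-\theta}{2}+\frac{\theta}{\kappa'}$; since $\beta>2\ge p$ one has $L^{p}\hookrightarrow L^{p,\beta}$, and (\ref{StrLp}) follows. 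The parameter count then closes exactly: $\theta\in(0,1)$ corresponds to $\kappa\in(p',\infty)$, and on the scaling line your conditions $0<\frac1\kappa<\frac12$ and $\frac1\beta<\frac12-\frac1\kappa$ are equivalent to $\kappa>p'$, so this covers all of $\mathscr{S}(p)$. With that modification your proof is correct and self-contained; as written, the simultaneous variation of the inner exponents is the step that fails.
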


\begin{proof}\,

See \cite[Theorem 2.1]{Kato}.

\end{proof}

In view of the Plancherel identity $\|\phi \|_{L^2}=c\|\widehat{\phi}\|_{L^2}$, there can be another $L^p$ generalization of the Strichartz estimate: 
replace $\|\widehat{\phi}\|_{L^2}$ with $\|\widehat{\phi}\|_{L^{p'}}$.  In the case of one space dimension, this kind of generalization holds true
as long as $p> 4/3$:

\begin{prop}\label{GFSprop}
Let $4/3 <p \le 2$.  For any $(\beta,\kappa) \in \widehat{\mathscr{S}}(p)$ the estimate
\begin{equation}
\|U(t) \phi \|_{L^{\beta}(L^{\kappa})} \le C\| \phi \|_{\Lp}. \label{GFS}
\end{equation}
holds true.

\end{prop}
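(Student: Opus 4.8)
The plan is to recast (\ref{GFS}) as a Fourier extension bound for the parabola and to prove it by a bilinear ``squaring'' argument together with the Hardy--Littlewood--Sobolev inequality, deferring the remaining pairs to an interpolation with the classical Strichartz estimate. Writing $\psi=\widehat{\phi}$, the free evolution has the explicit form $U(t)\phi(x)=c\int_{\R}e^{i(x\xi-t\xi^2)}\psi(\xi)\,d\xi=:E\psi(t,x)$, so that $\|\phi\|_{\Lp}=\|\psi\|_{L^{p'}}$ and the claim becomes the extension estimate $\|E\psi\|_{L^{\beta}(L^{\kappa})}\le C\|\psi\|_{L^{p'}}$. Since $U(t)$ is a unitary Fourier multiplier this reformulation is exact, and the question is now purely one about the adjoint restriction operator for the curve $\xi\mapsto(\xi,\xi^2)$.

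First I would settle the one-parameter sub-family of diagonal pairs $\beta=\kappa=\theta$ with $\theta>4$ directly. The device is the identity together with the computation of the space-time Fourier transform of $(E\psi)^2$ as the push-forward of $\psi\otimes\psi$ under $(\xi,\eta)\mapsto(\xi+\eta,\xi^2+\eta^2)$, whose Jacobian is $2|\xi-\eta|$:
\[
\|E\psi\|_{L^{\theta}_{t,x}}^2=\|(E\psi)^2\|_{L^{\theta/2}_{t,x}},\qquad (E\psi)^2=\mathcal{F}^{-1}_{t,x}G,\quad G(u,w)=c\,\frac{\psi(\xi)\psi(\eta)}{|\xi-\eta|}.
\]
Applying the two-dimensional Hausdorff--Young inequality (legitimate because $\theta/2>2$) and reverting the change of variables turns the bound into the weighted bilinear integral
\[
\iint_{\R^2}\frac{|\psi(\xi)|^{\,s}\,|\psi(\eta)|^{\,s}}{|\xi-\eta|^{\,s-1}}\,d\xi\,d\eta,\qquad s=\Big(\tfrac{\theta}{2}\Big)'.
\]
This is exactly the setting of the one-dimensional Hardy--Littlewood--Sobolev inequality, and the exponent bookkeeping (with $2/a+(s-1)=2$ and $p'=as$) produces precisely $\|\psi\|_{L^{p'}}^2$ with $3/\theta=1/p$; the integrand is locally integrable exactly when $s-1<1$, i.e. $\theta>4$, which is the analytic origin of the hypothesis $p>4/3$.

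Once this sub-family is in hand I would obtain the general pair $(\beta,\kappa)\in\widehat{\mathscr{S}}(p)$ by complex interpolation between the diagonal estimates above, whose right-hand side is a genuine $L^{p'}$ norm with $p'>2$, and the classical $L^2$-Strichartz estimate, namely (\ref{StrLp}) at $p=2$, whose right-hand side is $\|\psi\|_{L^2}$. Because interpolation is affine in the reciprocal exponents it preserves the scaling relation $2/\beta+1/\kappa=1/p$, and letting the endpoints range over the admissible line $p=2$ and over the diagonal curve $\theta\in(4,6]$ should sweep out the interior of the region cut out by $0<1/\kappa<1/2$ and $0<1/\beta<\min(1/2-1/\kappa,1/4)$. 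The boundary family $(\beta,\kappa)=(4,r)$ with $r>4$ cannot be reached as an interior interpolant; it corresponds to $\beta=4$, where after squaring the $t$-integration is exactly $L^2$ (Plancherel in the dual variable $w$) and the $x$-part is controlled by Hausdorff--Young in $x$ (available since $r>4$) followed by a direct estimate of $\|G\|_{L^2_w(L^{(r/2)'}_u)}$. I would therefore treat this edge separately, as an endpoint of the same scheme.

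The main obstacle, I expect, is twofold. The substantive analytic point is the weighted bilinear integral: one must extract exactly the exponent $p'$ from Hardy--Littlewood--Sobolev while tracking the singular weight $|\xi-\eta|^{-(s-1)}$ coming from the Jacobian, and it is here that $p>4/3$ is forced, since the weight ceases to be locally integrable (equivalently HLS degenerates to its endpoint) as $p\downarrow 4/3$. The more bureaucratic difficulty is the off-diagonal and boundary bookkeeping: for pairs with $\kappa<4$ the naive Hausdorff--Young step is unavailable in the $x$-variable, so one must either invoke the mixed-norm Hausdorff--Young inequality together with a one-dimensional HLS in $x$ or, as proposed here, bypass this by interpolation; the delicate verification is then that the interpolated region coincides with $\widehat{\mathscr{S}}(p)$ and that the edge $1/\beta=1/4$ is correctly captured by the direct endpoint argument.
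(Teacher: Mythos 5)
The paper does not prove this proposition itself but defers to \cite[Lemma 4]{107T}, and your argument is essentially a reconstruction of that proof: the Fefferman-type squaring of $E\psi$, the two-dimensional Hausdorff--Young step, the Hardy--Littlewood--Sobolev bound for the Jacobian-weighted bilinear integral (which is exactly where $p>4/3$, i.e.\ $\theta>4$, enters), and interpolation with the classical $L^2$ Strichartz family to fill out $\widehat{\mathscr{S}}(p)$, with the edge $\beta=4$ handled by Plancherel in $t$. Your exponent bookkeeping checks out ($s=(\theta/2)'$, $p'=as=\theta/(\theta-3)$ gives $1/p=3/\theta$, consistent with the scaling relation on the diagonal), so this is correct and matches the approach of the cited reference.
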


\begin{proof}
See e.g. \cite[Lemma 4]{107T}.

\end{proof}

Next we present a generalization of the estimate (\ref{L2Strh}) for the solution of inhomogeneous equations
\begin{equation*}
iu_t +u_{xx}=F,\quad u|_{t=0}=0.
\end{equation*}

\begin{prop} \label{inhomo}
Assume that
\begin{equation*}
2+\frac{2}{\beta}+\frac{1}{\kappa}=\frac{2}{\sigma}+\frac{1}{\rho},
\end{equation*}
\begin{equation*}
2<\kappa<\infty,\quad 1<\rho <2
\end{equation*}
and
\begin{equation*}
0<\frac{1}{\beta}<\frac{1}{2}-\frac{1}{\kappa},\quad \frac{3}{2}-\frac{1}{\rho}<\frac{1}{\sigma}<1.
\end{equation*}
Then the estimate
\begin{equation*}
\left\| \int^t_0 U(t-s)F(s) ds \right\|_{L^{\beta}(L^{\kappa})} \le C\|F\|_{L^{\sigma}(L^{\rho})}
\end{equation*}
is true.

\end{prop}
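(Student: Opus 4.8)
The plan is to realize the retarded operator $\mathcal{A}F(t)\triangleq\int_0^t U(t-s)F(s)\,ds$ as an interpolation of two more elementary space-time estimates, both of which respect the scaling identity $2+\frac{2}{\beta}+\frac1\kappa=\frac2\sigma+\frac1\rho$. A preliminary observation is that this identity is forced by the Schr\"odinger scaling $u(t,x)\mapsto u(\lambda^2 t,\lambda x)$, so every estimate entering the argument must satisfy it; this is the single algebraic constraint tying the four exponents together, and it will be preserved automatically under interpolation.

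First I would treat the special case lying on the spatial dual line $\rho=\kappa'$. Here one bounds pointwise in time by Minkowski's inequality together with the dispersive estimate (\ref{decay}): since $2<\kappa<\infty$,
\[
\|\mathcal{A}F(t)\|_{L^\kappa}\le\int_0^t\|U(t-s)F(s)\|_{L^\kappa}\,ds\le C\int_0^t|t-s|^{-(\frac12-\frac1\kappa)}\|F(s)\|_{L^{\kappa'}}\,ds,
\]
so the spatial estimate collapses to a one-dimensional fractional integral in time with kernel $|t-s|^{-(\frac12-\frac1\kappa)}$. Because $\frac12-\frac1\kappa\in(0,1)$, the Hardy--Littlewood--Sobolev inequality yields $\|\mathcal{A}F\|_{L^\beta(L^\kappa)}\le C\|F\|_{L^\sigma(L^{\kappa'})}$ exactly when $\frac1\sigma=\frac1\beta+\frac12+\frac1\kappa$ with $1<\sigma<\beta<\infty$; this already matches the scaling identity with $\rho=\kappa'$, and the domination by the untruncated fractional integral removes any need to track the truncation at $s=t$.

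The target exponents are generically off this line ($\rho\neq\kappa'$), so the second ingredient is the classical $L^2$ inhomogeneous Strichartz estimate $\|\mathcal{A}F\|_{L^{\beta_0}(L^{\kappa_0})}\le C\|F\|_{L^{\tilde\beta_0'}(L^{\tilde\kappa_0'})}$ for admissible pairs $(\beta_0,\kappa_0),(\tilde\beta_0,\tilde\kappa_0)$, which I would derive from the homogeneous estimate (\ref{StrLp}) specialized to $p=2$ (the set $\mathscr{S}(2)$ being precisely the non-endpoint admissible pairs), together with duality and the Christ--Kiselev lemma. Note the hypotheses give $\frac1\sigma>\frac32-\frac1\rho>\frac12>\frac1\beta$, hence $\sigma<\beta$, so Christ--Kiselev applies to the retarded truncation. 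This second family is pinned to output scaling $\frac2{\beta_0}+\frac1{\kappa_0}=\frac12$, but it admits independent (off-dual) spatial exponents on its input side.

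Finally I would interpolate the two families by complex/operator interpolation of the fixed operator $\mathcal{A}$ between the corresponding pairs of mixed-norm spaces $L^\sigma(L^\rho)$ and $L^\beta(L^\kappa)$. Since both families satisfy the common scaling identity, so does every intermediate estimate, and letting the interpolation parameter together with the free admissible/dual-line endpoints vary sweeps out an open region of $(\frac1\beta,\frac1\kappa,\frac1\sigma,\frac1\rho)$. The main work, and the main obstacle, is the exponent bookkeeping: showing that the three stated inequalities cut out exactly the interior of this region. I expect $0<\frac1\beta<\frac12-\frac1\kappa$ to correspond to remaining strictly between the dispersive output line and the $L^2$-admissible output line, while $\frac32-\frac1\rho<\frac1\sigma<1$ encodes $\frac1\sigma+\frac1\rho>\frac32$ (the $\beta<\infty$ condition inherited from Hardy--Littlewood--Sobolev, as one checks $\frac1\sigma+\frac1\rho=\frac1\beta+\frac32$ on the dual line) together with $\sigma>1$ (also required for Hardy--Littlewood--Sobolev). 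Care is needed to confirm that no forbidden endpoint---$\kappa=\infty$, $\sigma=1$, $\beta=\infty$, or the Hardy--Littlewood--Sobolev endpoint---is ever invoked, which is exactly why all three hypotheses are stated as strict inequalities.
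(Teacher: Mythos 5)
The paper gives no proof of Proposition \ref{inhomo} at all --- unlike its neighbours it carries neither an argument nor a citation; it is the non-admissible inhomogeneous Strichartz estimate from Kato's $L^{q,r}$-theory (the reference [Kato] already in the bibliography), later systematized by Foschi and Vilela. So there is nothing in the text to compare you against line by line. Your outline is exactly the standard literature proof: dispersive bound plus one-dimensional Hardy--Littlewood--Sobolev on the dual line $\rho=\kappa'$ (your exponent relation $\tfrac1\sigma=\tfrac12+\tfrac1\beta+\tfrac1\kappa$ there is correct and consistent with the scaling identity), the doubly admissible $L^2$ estimate obtained by duality and Christ--Kiselev (your check $\sigma<2<\beta$, so the Christ--Kiselev hypothesis holds, is correct), and complex interpolation of the fixed retarded operator between the two families in mixed-norm spaces.

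The one genuine gap is the step you explicitly defer: showing that interpolating these two two-parameter families actually sweeps out the \emph{entire} stated region rather than some proper open subset. That is the real content of the proposition, and it is not automatic; fortunately it checks out. Work in coordinates $(1/\beta,1/\kappa,1/\rho)$ with $1/\sigma$ eliminated via scaling. The closure of the stated region is the convex polytope cut out by $0\le\tfrac1\kappa\le\tfrac12$, $\tfrac12\le\tfrac1\rho\le 1$, $\tfrac1\beta\ge 0$, $\tfrac1\beta+\tfrac1\kappa\le\tfrac12$, $\tfrac2\beta+\tfrac1\kappa+\tfrac1\rho\ge 1$ and $\tfrac2\beta+\tfrac1\kappa\le\tfrac1\rho$ (the last two are $\tfrac1\sigma\le1$ and $\tfrac1\sigma\ge\tfrac32-\tfrac1\rho$ rewritten through the scaling relation); it has exactly five vertices, namely $(0,0,1)$, $(\tfrac12,0,1)$, $(0,\tfrac12,\tfrac12)$, $(\tfrac14,0,\tfrac12)$, $(0,\tfrac12,1)$. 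Your dual-line family is the slice $\{\tfrac1\rho+\tfrac1\kappa=1\}$ of this polytope, a triangle with vertices $(0,0,1)$, $(\tfrac12,0,1)$, $(0,\tfrac12,\tfrac12)$; the doubly admissible family is the slice $\{\tfrac2\beta+\tfrac1\kappa=\tfrac12\}$, a quadrilateral whose vertex set contains the remaining two. Since the two slices together contain every vertex, their convex hull is the whole polytope; and writing $g_1=\tfrac1\rho+\tfrac1\kappa-1$, $g_2=\tfrac2\beta+\tfrac1\kappa-\tfrac12$, one checks $|g_1|+|g_2|<\tfrac12$ on the open region, which is exactly what is needed to place any interior point on an open segment joining relatively interior points of the two slices (the requirement being $g_1(X)=\theta\,g_1(X_2)$, $g_2(X)=(1-\theta)\,g_2(X_1)$ with $|g_1|<\tfrac12$ on one slice and $|g_2|<\tfrac12$ on the other). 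With that verification supplied your argument is complete; without it, the proof as written asserts rather than proves the coverage, which is the only substantive issue.
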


\if0
******************************************************************
\begin{rem}
It is not difficult to get multi   However, 
very involved.  In fact, it is closely related to the problem of Fourier restriction.
\end{rem}
**********************************************************
\fi

\subsection{$L^p$-Duhamel estimate}  Now we present our key $L^p$ estimate in this paper.  Recall the two generalized Strichartz estimate for the homogeneous equation above.
By the Hausdorff--Young inequality, we easily see that (\ref{GFS}) is stronger than (\ref{StrLp}) if $p<2$ while $\widehat{\mathscr{S}}(p) \subset \mathscr{S}(p)$.  It is sufficient to use (\ref{StrLp}) in order to estimate the linear contribution $U(t)\phi$ of the integral equation corresponding to
(\ref{NLS}).  However, it turns out that the stronger estimate (\ref{GFS})  is also very useful.  Indeed, it plays an important role in the estimate of the Duhamel part of the integral equation which
lead to the twisted persistency property of the solution (\ref{ipersistency}).
To see this, we rewrite Proposition \ref{GFSprop} by taking dual of (\ref{GFS}):


\begin{cor} \label{DFS}
Assume that $2\le p<4$.  Let $J\subset \R$ be an interval. Let $\mathfrak{I}(J)$ be the set of
all intervals included in $J$.  Then
\begin{equation}
\sup_{I \in \mathfrak{I}(J)} \left\| \int_I U(-s)f(s) ds \right\|_{\Lp}
\le \|f \|_{L_J^{\tilde{\sigma}} (L^{\tilde{\rho}})} \label{DFSest}
\end{equation}
for any $\tilde{\sigma},\tilde{\rho}$ such that $(\tilde{\sigma}', \tilde{\rho}') \in\widehat{\mathscr{S}}(p')$.

\end{cor}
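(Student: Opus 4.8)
The plan is to derive (\ref{DFSest}) as the dual statement of the homogeneous estimate (\ref{GFS}). The crucial numerology is that the hypothesis $2\le p<4$ is equivalent to $4/3<p'\le 2$, so that Proposition \ref{GFSprop} applies \emph{with $p'$ in place of $p$}: for every $(\beta,\kappa)\in\widehat{\mathscr{S}}(p')$ one has $\|U(t)\psi\|_{L^\beta(L^\kappa)}\le C\|\psi\|_{\Lpp}$. I would first record the duality for the hat spaces. Since the Fourier transform is, up to a constant, an isometry $\Lp\to L^{p'}$ and $\Lpp\to L^p$, and $1<p<\infty$ throughout our range, Plancherel's identity gives
\begin{equation*}
\|g\|_{\Lp}=c\,\sup\left\{\,|\la g,\psi\ra|\;:\;\psi\in\mathcal{S},\ \|\psi\|_{\Lpp}\le 1\,\right\},
\end{equation*}
where $\la g,\psi\ra=\int_{\R}g\bar\psi$ is the usual $L^2$ pairing. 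It therefore suffices to bound $|\la g_I,\psi\ra|$ uniformly in $I\in\mathfrak{I}(J)$ and in test functions with $\|\psi\|_{\Lpp}\le1$, where $g_I\triangleq\int_I U(-s)f(s)\,ds$.

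Next I would move the propagator onto the test function. Using that $U(s)$ is unitary on $L^2$, so that $(U(-s))^{\ast}=U(s)$, and interchanging the $s$-integral with the pairing, I obtain
\begin{equation*}
\la g_I,\psi\ra=\int_I \la U(-s)f(s),\psi\ra\,ds=\int_I \la f(s),U(s)\psi\ra\,ds.
\end{equation*}
Applying H\"older in $x$ with the conjugate pair $(\tilde\rho,\tilde\rho')$ and then H\"older in $s$ over $I$ with $(\tilde\sigma,\tilde\sigma')$ yields
\begin{equation*}
|\la g_I,\psi\ra|\le \|f\|_{L^{\tilde\sigma}_I(L^{\tilde\rho})}\,\|U(\cdot)\psi\|_{L^{\tilde\sigma'}_I(L^{\tilde\rho'})}\le \|f\|_{L^{\tilde\sigma}_J(L^{\tilde\rho})}\,\|U(\cdot)\psi\|_{L^{\tilde\sigma'}(L^{\tilde\rho'})},
\end{equation*}
where in the last step I enlarge the time interval from $I$ to $J$ in the first factor and to all of $\R$ in the second. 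Since $(\tilde\sigma',\tilde\rho')\in\widehat{\mathscr{S}}(p')$ by hypothesis, the homogeneous estimate (\ref{GFS}) bounds the last factor by $C\|\psi\|_{\Lpp}$.

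Combining these, $|\la g_I,\psi\ra|\le C\|f\|_{L^{\tilde\sigma}_J(L^{\tilde\rho})}\|\psi\|_{\Lpp}$ with a constant independent of $I$ and $\psi$; taking the supremum over admissible $\psi$ and then over $I\in\mathfrak{I}(J)$ produces (\ref{DFSest}). The point that most needs care is the uniformity in $I$: it is essential that, after H\"older, the time integration in the homogeneous Strichartz factor be harmlessly extended to the whole line, so that the resulting constant no longer sees the particular subinterval — this is precisely what allows the single estimate (\ref{GFS}) to control the supremum over $\mathfrak{I}(J)$. Secondarily, I would perform the pairing manipulations for Schwartz $\psi$ and for $f$ smooth and compactly supported in time, passing to the general case by density; this also legitimizes interchanging $\int_I$ with $\la\cdot,\psi\ra$ and the identity $(U(-s))^{\ast}=U(s)$. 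As usual the absolute constant can either be tracked through the duality or normalized to $1$ as written.
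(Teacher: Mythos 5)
Your argument is correct and is precisely the duality argument the paper intends: Corollary \ref{DFS} is stated as the dual of the homogeneous estimate (\ref{GFS}) applied with $p'$ in place of $p$ (valid since $2\le p<4$ iff $4/3<p'\le2$), and your pairing/H\"older/extension-to-$\R$ computation fills in exactly that step, with the uniformity in $I$ handled as the paper implicitly requires. No discrepancies to report.
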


Now, from (\ref{DFSest}), we derive our key bilinear $L^p$ estimate for a Duhamel type operator.

\begin{prop}\label{bilinear}
\begin{enumerate}
\item
Assume $4/3<p\le 2$.  Let $\sigma$ be such that
$(\sigma', \rho') \in \mathscr{S}(p)$ for an exponent $\rho$. Then 
\begin{equation}
\sup_{t \in [0,T]} \left\|
\int^t_0 U(-s)F(s) ds \right\|_{L^p} \le C \| s^{\frac{1}{p}-\frac{1}{2} }F(s) \|_{L_{[0,T]}^{\sigma}(L^{\rho})}\label{WDFS}
\end{equation}
for any $F$ such that $s^{\frac{1}{p}-\frac{1}{2}}F(s)\in L^{\sigma}_{[0,T]}(L^{\rho})$.
\item
Let $p,\sigma,\rho$ be as in (i).  Then
\begin{equation}
\sup_{t \in [0,T]} \left\|
\int^t_0 U(-s)F(s) ds \right\|_{L^p}
\le C \|f_1 \|_{L_{[0,T]}^{\sigma} (L^{\frac{\sigma}{2\sigma-2}   } )} \|U^{-1}f_2 \|_{L^{\infty}_{[0,T]}(L^p)} \label{BLWDFS}
\end{equation}
for any $f_1,f_2,F$ such that $(f_1, U^{-1}f_2 )\in 
L_{[0,T]}^{    \sigma  }  (L^{  \frac{\sigma}  {2\sigma-2} }    ) \times L^{\infty}_{[0,T]}(L^p)$
and $|F| \le |f_1| |f_2|$.
\end{enumerate}
\end{prop}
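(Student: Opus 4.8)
The plan is to treat (i) as the analytic core and to obtain (ii) from it by Hölder's inequality together with the dispersive decay estimate (\ref{decay}), the weight $s^{1/p-1/2}$ in (i) being exactly what is consumed by the decay factor. I will first record this deduction, since it pins down the role of the weight, and then describe the proof of (i), which is where the real work lies.

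For the deduction of (ii) from (i), observe that $(\sigma',\rho')\in\mathscr{S}(p)$ forces $\sigma',\rho'>2$, hence $1<\sigma<2$ and $\rho<2$. From $|F|\le|f_1||f_2|$ and Hölder in the space variable with $\frac1\rho=\frac1a+\frac1{p'}$,
\[ \|F(s)\|_{L^\rho}\le\|f_1(s)\|_{L^a}\,\|f_2(s)\|_{L^{p'}}. \]
Writing $f_2(s)=U(s)\bigl(U(-s)f_2(s)\bigr)$ and applying (\ref{decay}) in one dimension,
\[ \|f_2(s)\|_{L^{p'}}\le(4\pi|s|)^{-(\frac1p-\frac12)}\,\|U(-s)f_2(s)\|_{L^p}\le(4\pi|s|)^{-(\frac1p-\frac12)}\,\|U^{-1}f_2\|_{L^{\infty}_{[0,T]}(L^p)}. \]
Multiplying by $s^{\frac1p-\frac12}$ cancels the decay factor, so $s^{\frac1p-\frac12}\|F(s)\|_{L^\rho}\le C\|f_1(s)\|_{L^a}\|U^{-1}f_2\|_{L^{\infty}_{[0,T]}(L^p)}$; taking the $L^\sigma_{[0,T]}$-norm in $s$ and invoking (i) yields (ii), provided $a=\frac{\sigma}{2\sigma-2}$. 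This identity is forced by the scaling $\frac2{\sigma'}+\frac1{\rho'}=\frac1p$, which gives $\frac1\rho=3-\frac2\sigma-\frac1p$ and hence $\frac1a=\frac1\rho-\frac1{p'}=2-\frac2\sigma=\frac{2\sigma-2}{\sigma}$, exactly as required.

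For (i) itself, fix $t\in[0,T]$ and set $G(t)=\int_0^tU(-s)F(s)\,ds$. By duality $\|G(t)\|_{L^p}=\sup\{|\la G(t),g\ra|:\|g\|_{L^{p'}}\le1\}$, and the unitarity of $U$ on $L^2$ gives $\la G(t),g\ra=\int_0^t\la F(s),U(s)g\ra\,ds$. Inserting $1=s^{\frac1p-\frac12}\,s^{-(\frac1p-\frac12)}$ and applying Hölder in $x$ (exponents $\rho,\rho'$) and then in $s$ (exponents $\sigma,\sigma'$) reduces (i) to the weighted homogeneous estimate
\[ \Bigl\|\,s^{-(\frac1p-\frac12)}\,U(s)g\,\Bigr\|_{L^{\sigma'}_{[0,T]}(L^{\rho'})}\le C\,\|g\|_{L^{p'}}. \]
The natural route is to establish this from the dispersive bound $\|U(s)h\|_{L^{\rho'}}\le C|s|^{-(\frac1\rho-\frac12)}\|h\|_{L^\rho}$ (which is (\ref{decay}) at the exponent $\rho$) by a fractional-integration argument in the time variable, the exponents being matched by the defining relations of $\mathscr{S}(p)$. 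In particular the constraint $0<\frac1{\sigma'}<\frac12-\frac1{\rho'}$ yields $(\frac1p-\frac12)\sigma'<1$, so the weight is locally integrable at $s=0$, and it also secures convergence of the time integral as $s\to\infty$; a scaling check confirms the estimate is dimensionally consistent.

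The main obstacle is precisely this weighted estimate, namely obtaining genuine $L^p$ control of the Duhamel term for $p\le2$, as opposed to the weaker $\Lp$-control that the unitarity of $U$ on $\Lp$ supplies for free. The delicate point is a duality mismatch: an $L^p$ bound with $p\le2$ forces test functions $g\in L^{p'}$ with $p'\ge2$, a regime not directly accessible to the Hausdorff--Young--based reasoning behind the generalized Strichartz estimates of Proposition \ref{GFSprop} and Corollary \ref{DFS}, and one in which a naive $TT^*$ scheme is unavailable because the data sit in $L^{p'}$ rather than in $L^2$. It is exactly the weight $s^{\frac1p-\frac12}$, tuned to the dispersive decay rate, that restores scale invariance and integrability and makes the homogeneous estimate hold; carrying this through—most plausibly via the chirp/Fourier factorization of $U(s)$ or the machinery underlying Corollary \ref{DFS}—is the crux. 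Once it is in hand, both (i) and (ii) follow by the bookkeeping above.
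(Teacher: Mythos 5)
Your deduction of (ii) from (i) is correct and is essentially the paper's own: Hölder in $x$ with $\frac1\rho=\frac1a+\frac1{p'}$, the identity $\frac1a=2-\frac2\sigma$ forced by the scaling relation, and the dispersive bound applied to $f_2(s)=U(s)\bigl(U(-s)f_2(s)\bigr)$ to convert $\|f_2(s)\|_{L^{p'}}$ into $|s|^{-(1/p-1/2)}\|U(-s)f_2(s)\|_{L^p}$ (the paper obtains the same factor $s^{1/2-1/p}$ via the factorization $U(-s)=M_s^{-1}\mathcal{F}^{-1}D_s^{-1}M_s^{-1}$ together with Hausdorff--Young). Your reduction of (i) by duality to the weighted homogeneous estimate
\begin{equation*}
\bigl\| s^{-(\frac1p-\frac12)}\,U(s)g \bigr\|_{L^{\sigma'}_{[0,T]}(L^{\rho'})} \le C\,\|g\|_{L^{p'}}
\end{equation*}
is also legitimate. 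But this reformulation carries no content: it is precisely the dual statement of (\ref{WDFS}) itself, so nothing has been proved until this estimate is established, and you do not establish it. You explicitly flag it as ``the crux'' and ``the main obstacle,'' note correctly that a $TT^*$ scheme and the plain dispersive-plus-fractional-integration argument are unavailable because the datum $g$ lives in $L^{p'}$ with $p'\ge 2$ rather than in $L^2$ or $L^\rho$, and then leave it at a gesture toward ``the chirp/Fourier factorization or the machinery underlying Corollary \ref{DFS}.'' That is exactly the step that constitutes the proof, so the proposal has a genuine gap at its analytic core.

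For the record, the paper's mechanism for (i) is the following. Write $[Iu](t)=\int_0^t U(-s)F(s)\,ds=\int_0^t M_s^{-1}\mathcal{F}^{-1}g(s)\,ds$ with $g(s)=D_s^{-1}M_s^{-1}F(s)$, use the identity $\mathcal{F}M_s\mathcal{F}^{-1}=cU(-1/s)$, and change variables $\tau=1/s$ to recast $\mathcal{F}[\overline{Iu(t)}]$ as a Duhamel integral $\int_{t^{-1}}^{\infty}U(-\tau)\tau^{-2}R\overline{g(1/\tau)}\,d\tau$. Since $\|[Iu](t)\|_{L^p}=\|\mathcal{F}[\overline{Iu}](t)\|_{\Lpp}$, Corollary \ref{DFS} (the dual of the generalized hat-Strichartz estimate (\ref{GFS}), which is where the restriction $p>4/3$ enters) bounds this by $C\|s^{2-2/\sigma}g(s)\|_{L^\sigma_{[0,T]}(L^\rho)}$, and the dilation factor $\|g(s,\cdot)\|_{L^\rho}=4\pi|s|^{1/2-1/\rho}\|F(s,\cdot)\|_{L^\rho}$ combines with the Jacobian power to produce exactly the weight $s^{1/p-1/2}$. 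So the time inversion $t\mapsto 1/t$ converting the $L^p$ bound into an $\Lpp$ bound accessible to Corollary \ref{DFS} is the idea your proposal is missing; without it (or an equivalent proof of your weighted homogeneous estimate), the argument is incomplete.
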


\begin{proof} To deduce the estimate, we need the following factorization formula for the evolution operator 
$U(-t)$(see \cite[Chapter 2]{Caz}):
\begin{equation*}
U(-t)=M_t^{-1} \mathcal{F}^{-1} D_t^{-1} M_t^{-1},\quad t\neq 0,
\end{equation*}
where 
\begin{equation*}
M_t : w\mapsto e^{i\frac{|x|^2}{4t}},\quad (D_t w)(x)\triangleq (4\pi i t)^{-\frac{1}{2}} w\left(\frac{x}{4\pi i t}\right).
\end{equation*}
We now start the proof of the estimate.  Fix $t \in [0,T]$ and set
\begin{equation*}
[Iu](t) \triangleq \int^t_0 U(-s) F(s) ds.
\end{equation*}
Using the factorization of $U(-t)$, we may write
\begin{eqnarray*}
[Iu](t) &=&\int^t_0 M_s^{-1} \mathcal{F}^{-1} D_s^{-1} M_s^{-1} F(s) ds \\
&=& \int^t_0 M_s^{-1} \mathcal{F}^{-1} g(s) ds,
\end{eqnarray*}
where
\begin{equation*}
g(s)\triangleq D_s^{-1} M_s^{-1} F(s).
\end{equation*}
We observe that $\mathcal{F}M_s \mathcal{F}^{-1}=cU(-1/s),\,s\neq 0$.  Thus
for any $t\in (0,T]$, we have
\if0
**************************************
\begin{eqnarray*}
\mathcal{F} [Iu](t)  
&=&  \int^t_0 (\mathcal{F} M_s^{-1} \mathcal{F}^{-1} ) g(s )ds \\
&=& \int^t_0 U(-1/s) g(s) ds \\
&=& \int^{\infty}_{t^{-1}} U(-\tau) \frac{\overline{g(1/\tau)}}{\tau^{2}} d\tau.
\end{eqnarray*}
***************************************
\fi
\begin{eqnarray*}
\mathcal{F}[\overline{Iu (t)}] &=& \int^t_0 (\mathcal{F} M_s \mathcal{F}^{-1}) R\overline{g(s)} ds \\
&=& \int^t_0 U(-1/s) R \overline{g(s)} ds \\
&=& \int^{\infty}_0 U(-\tau) \frac{R\overline{ g(1/\tau)}}{\tau^2} d\tau,
\end{eqnarray*}
where $(Rf)(x)\triangleq f(-x)$.
Therefore, by Corollary \ref{DFS}, we have
\if0
**********************************************************
\begin{eqnarray*}
\| [Iu](t) \|_{L^p} &=& \left\| \mathcal{F} [Iu](t) \right\|_{\Lpp} \\
&=& \left\| \int^{\infty}_{t^{-1}} U(-\tau) \frac{g(1/\tau)}{\tau^{2}} d\tau \right\|_{\Lpp} \\
&\le & C\|\tau^{-2} g(1/\tau) \|_{L^{\sigma}_{[T^{-1},\infty)}(L^{\rho})} \\
&=& C\| s^{2-\frac{2}{\sigma}} g(s) \|_{L^{\sigma}_{[0,T]}(L^{\rho})}.
\end{eqnarray*}
********************************************
\fi
\begin{eqnarray*}
\| [Iu](t) \|_{L^p} &=& \left\| \mathcal{F}^{-1} [Iu](t) \right\|_{\Lpp} \\
&=& \left\| \overline{\mathcal{F}^{-1} [Iu](t) }\right\|_{\Lpp} \\
&=& \left\| \mathcal{F} [\overline{Iu}](t) \right\|_{\Lpp} \\
&=& \left\| \int^{\infty}_{t^{-1}} U(-\tau) \frac{R\overline{g(1/\tau)}}{\tau^{2}} d\tau \right\|_{\Lpp} \\
&\le & C\|\tau^{-2} g(1/\tau) \|_{L^{\sigma}_{[T^{-1},\infty)}(L^{\rho})} \\
&=& C\| s^{2-\frac{2}{\sigma}} g(s) \|_{L^{\sigma}_{[0,T]}(L^{\rho})}.
\end{eqnarray*}
Since $|g(s,x) |=4\pi |s|^{\frac{1}{2}} \left| F(s, 4\pi s x) \right|$, we have
\begin{equation*}
\|g(s,\cdot )\|_{L^{\rho}}=4\pi |s|^{\frac{1}{2}-\frac{1}{\rho}} \|F(s,\cdot)\|_{L^{\rho}}.
\end{equation*}
Thus we get
\begin{equation}
\| [Iu](t) \|_{L^p} \le C \| s^{\frac{1}{p}-\frac{1}{2} }F(s) \|_{L_{[0,T]}^{\sigma}(L^{\rho})} 
\end{equation}
for any $t\in [0,T]$.  In order to prove (ii), we examine the right hand side further.  Fix $s\in [0,T]$.
Applying the H\"older inequality, we have
\begin{equation*}
\|F(s) \|_{L^{\rho}} \le \|f_1(s) f_2(s) \|_{L^{\rho}}
\le \|f_1(s) \|_{L^{ \frac{\sigma}{2\sigma-2}  } } \| M_s^{-1} f_2(s) \|_{L^{p'}}
\end{equation*}
where note that $\frac{\sigma}{2\sigma-2} \in [1,\infty]$ since $\sigma \le 2$.  Now noting the relation
\begin{equation*}
\| D_s^{-1} f\|_{L^k} =c|s|^{\frac{1}{2}-\frac{1}{k}} \|f\|_{L^k}
\end{equation*}
for any $0<k<\infty$ and the Hausdorff--Young inequality and recalling the factorization formula above, we get
\begin{equation*}
\|F(s) \|_{L^{\rho}} \le s^{\frac{1}{2}-\frac{1}{p}}\|f_1(s) \|_{L^{ \frac{\sigma}{2\sigma-2}  } }  \|U(-s) f_2(s) \|_{L^p}.
\end{equation*}
Finally, taking $L^{\sigma}$-norm after multiplying $s^{1/p-1/2}$, we get 
the wanted estimate.
\end{proof}

\begin{rem}
 We observe that the key bilinear type, Duhamel $L^p$ estimate is obtained from the generalized Strichartz estimate (\ref{GFS}) via the change of the time variable $t\mapsto 1/t$.  This implies that the global-in-time version of (\ref{GFS}) should be
exploited even for obtaining our local results in $L^p$.

\end{rem}
\subsection{Hausdorff--Young like property}
Finally, in this section, we present an elementary lemma
which shows that the twisted persistence property (\ref{ipersistency})
implies smoothing effects in terms of spatial integrability.
\begin{lem}
Assume that $1\le p\le 2$.  Let $I\subset \R$ and $u\in C_{\CS}(I ;L^p(\R^d))$.  
Then $u |_{I\setminus \{ 0\} \times \R^d} \in C(I\setminus \{0 \} ; L^{p'}(\R^d) )$.

\end{lem}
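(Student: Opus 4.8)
The plan is to reduce continuity of $u$ into $L^{p'}$ to the two pieces of information at hand: the hypothesis that the twisted profile $v(t)\triangleq U(-t)u(t)$ lies in $C(I;L^p)$, and the dispersive decay estimate (\ref{decay}). Writing $u(t)=U(t)v(t)$, fix $t_0\in I\setminus\{0\}$ and decompose, for $t\in I$ near $t_0$,
\begin{equation*}
u(t)-u(t_0)=U(t)\bigl[v(t)-v(t_0)\bigr]+\bigl[U(t)-U(t_0)\bigr]v(t_0),
\end{equation*}
so that it suffices to show each term tends to $0$ in $L^{p'}$ as $t\to t_0$.

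First I would dispatch the first term, which is where the twisting pays off. By (\ref{decay}) applied to $\phi=v(t)-v(t_0)\in L^p$,
\begin{equation*}
\bigl\|U(t)[v(t)-v(t_0)]\bigr\|_{L^{p'}}\le (4\pi|t|)^{-d(\frac1p-\frac12)}\,\|v(t)-v(t_0)\|_{L^p}.
\end{equation*}
Since $t_0\neq 0$, the prefactor stays bounded by a constant $C=C(t_0)$ for $t$ in a fixed neighbourhood of $t_0$ disjoint from $0$, while $\|v(t)-v(t_0)\|_{L^p}\to 0$ as $t\to t_0$ by the assumed continuity of $v$ into $L^p$. Hence this term vanishes in the limit.

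The second term is the genuine difficulty, since (\ref{decay}) controls only the \emph{size} of $U(t)\phi$ in $L^{p'}$ and not its continuity in $t$. What is needed is the strong continuity of the free propagator as a map $L^p\to L^{p'}$ away from $t=0$, i.e. that for a fixed $\phi\in L^p$ one has $U(\cdot)\phi\in C(\R\setminus\{0\};L^{p'})$ (a fact already recorded in the introduction). I would prove it by a density argument: given $\eps>0$, approximate $\phi$ in $L^p$ by a Schwartz function $\psi$ with $\|\phi-\psi\|_{L^p}<\eps$, and bound
\begin{equation*}
\|U(t)\phi-U(t_0)\phi\|_{L^{p'}}\le \|U(t)(\phi-\psi)\|_{L^{p'}}+\|U(t)\psi-U(t_0)\psi\|_{L^{p'}}+\|U(t_0)(\psi-\phi)\|_{L^{p'}}.
\end{equation*}
By (\ref{decay}) the outer two terms are $\le C(t_0)\,\eps$ uniformly for $t$ near $t_0$. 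For the middle term, Hausdorff--Young gives
\begin{equation*}
\|U(t)\psi-U(t_0)\psi\|_{L^{p'}}\le C\bigl\|(e^{-it|\xi|^2}-e^{-it_0|\xi|^2})\widehat\psi\bigr\|_{L^p},
\end{equation*}
which tends to $0$ as $t\to t_0$ by dominated convergence, the integrand being dominated by $2|\widehat\psi|\in L^p$ since $\widehat\psi$ is rapidly decaying. Applying this with $\phi=v(t_0)$ shows the second term vanishes as well.

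Combining the two estimates yields $u(t)\to u(t_0)$ in $L^{p'}$ as $t\to t_0$ for every $t_0\in I\setminus\{0\}$, which is exactly the assertion $u|_{(I\setminus\{0\})\times\R^d}\in C(I\setminus\{0\};L^{p'})$. The only nonroutine point is the continuity of the propagator in the second term; everything else is a direct application of (\ref{decay}) together with the hypothesis $v\in C(I;L^p)$.
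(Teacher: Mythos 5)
Your proof is correct and follows essentially the same route as the paper's (the paper defers the details to a reference, but the intended argument is exactly your decomposition $u(t)-u(t_0)=U(t)[v(t)-v(t_0)]+[U(t)-U(t_0)]v(t_0)$, with the decay estimate (\ref{decay}) handling the first term and the strong continuity of $t\mapsto U(t)\psi$ from $\R\setminus\{0\}$ to $L^{p'}$ handling the second). The only difference is that you supply the density/Hausdorff--Young argument for that continuity of the propagator, whereas the paper simply cites it; both are fine.
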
\label{H--Y}
\begin{proof}\quad The assertion is straightforward.  See \cite[Lemma 2.5]{107H2} for a precise proof.

\if0
***********************************************

In order to show continuity
at $t_0 \in I\setminus \{0 \}$, we simply write
\begin{eqnarray*}
\|u(t)-u(t_0)\|_{L^{p'}} &\le &
\| U(t) (U(-t)u(t)-U(-t_0)u(t_0))\|_{L^{p'}}\\
     &&+ \|U(t)U(-t_0)u(t_0)-U(t_0)U(-t_0) u(t_0) \|_{L^{p'}} \\
     &\le & (4\pi |t|)^{-d(\frac{1}{p}-\frac{1}{2})}
     \| U(-t)u(t) -U(-t_0)u(t_0)\|_{L^p} \\
     &&+ \|U(t)U(-t_0)u(t_0)-U(t_0)U(-t_0) u(t_0) \|_{L^{p'}}.
\end{eqnarray*}
Lettint $t\to t_0$, the first term in the right hand side tends to $0$ by the assumption.  
The second term also tends to $0$, since $t\mapsto U(t)\psi$ is
continuous from $\R\setminus \{0\} \to L^{p'}$ for any $\psi \in L^p(\R^d)$ (see \cite{GV}).

***************************************************
\fi

\end{proof}

\section{Proof of Theorem \ref{LWP} and \ref{SGWP}} \label{SLWP}
We first prove Theorem \ref{LWP}.  Assume that
\begin{equation}
\max\left( \frac{\A-1}{2}, \frac{4}{3}, \frac{\A+1}{\A} \right) <p \le 2. \label{proofprange}
\end{equation}
We seek a solution of the corresponding integral equation
\begin{equation}
u(t)=U(t)\phi +i \int^t_0 U(t-s) N(u(s)) ds. \label{inteq}
\end{equation}
Let $T>0$.  We define
\begin{equation*}
X_T\triangleq \mathfrak{L}^{\infty} ([0,T] ; L^p(\R)) \cap L^q ([0,T] ; L^r(\R) )
\end{equation*}
endowed with the norm
\begin{equation*}
\| u\|_{X_T}\triangleq \max \left( \| U^{-1}u\|_{L^{\infty}_{[0,T]}(L^p)}, \, \|u\|_{L^q_{[0,T]}(L^r)} \right),
\end{equation*}
where $q,r$ will be determined below.
We set
\begin{equation*}
\mathscr{T}u\triangleq U(t)\phi +i \int^t_0 U(t-s) N(u) ds
\end{equation*}
and find a fixed point of $\mathscr{T}$ in the complete metric space
\begin{equation*}
\mathscr{V}(T,R)\triangleq \{ u\in X_T \,|\,\,\,
\|u\|_{X_T} \le R \}
\end{equation*}
equipped with the distance
\begin{equation*}
d(u_1,u_2) \triangleq \|u_1-u_2\|_{X_T}.
\end{equation*}

\if0
*******************************************
$2<\A\le 3$.
 We set $\sigma=p=\frac{3p}{3p-1} =(3p)'$.

***********************************************
\fi

\if0
*******************************************************************************************************

 In the proof of Theorem \ref{LWP} we consider two choices of $(q,r)$.  The first choice
 can be applied to the case  $3\le \A<5$, while the second one works well for $1<\A\le 3$.
 
\noindent {\bf Case 1.}\quad We assume $3\le \A<5$.\quad Let $q,r$ be determined by the relation
\begin{equation}
r=2(\A-1),\quad \frac{2}{q}+\frac{1}{r}=\frac{1}{p}. \label{case1}
\end{equation}
Then it is easy to check that $(q,r)\in \mathscr{S}(p)$ for $p$ in the range given by (\ref{proofprange}) and
we get
\begin{equation*}
\|U(t)\phi \|_{L^q_{[0,T]}(L^r)}\le C\|\phi \|_{L^p}
\end{equation*}
by the generalized homogeneous Strichartz estimates.
In order to estimate the $L^{q}(L^r)$-norm of the nonlinear part, let $\gamma$ be determined by
\begin{equation*}
2+\frac{1}{p}=\frac{2}{\gamma}+\frac{\A}{2(\A-1)}.
\end{equation*}
Then the quadruple $(\beta,\kappa,\sigma, \rho)\triangleq (q,r,\gamma,\frac{2(\A-1)}{\A})$ satisfies the assumption of Proposition \ref{inhomo}.  Thus we have
\begin{eqnarray*}
\left\| \int^t_0 U(t-\tau) N(u(s))ds \right\|_{L^q_{[0,T]]}(L^r)} &\le &C \left\| |u|^{\A-1}u \right\|_{L^\gamma_{[0,T]}(L^{\frac{2(\A-1) }{\A}})}\\
&=& \|u \|^{\A}_{L^{\A\gamma}_{[0,T]}(L^{2(\A-1)})} \\
&\le & T^{1-\frac{\A-1}{2p}} \| u\|_{L^q_{[0,T]}(L^r)}^{\A}.
\end{eqnarray*}
Next we estimate $\mathfrak{L}^{\infty}_{[0,T]}(L^p)$-norm of the integral equation.  
Clearly, $\|U(t)\phi\|_{\mathfrak{L}^{\infty}_{[0,T]}(L^p)}=\|\phi \|_{L^p}$.  Now we
use our key bilinear estimate control $\mathfrak{L}^{\infty}(L^p)$-norm of the nonlinear term.  We apply Proposition \ref{bilinear} with
\begin{equation*}
\rho=\frac{2p}{3p-2},\quad \sigma=\frac{4}{3}.
\end{equation*}
It is easy to see that this choice of $(\rho,\sigma)$ satisfies the assumption of the proposition as long as $p>4/3$.  Hence we get
\begin{eqnarray*}
\sup_{t\in [0,T]} \left\|\int^t_0 U(-s) N(u) ds \right\|_{L^p}
&\le& C\|u^{\A-1} \|_{L^{\frac{4}{3}}_{[0,T]} (L^2)} \|u\|_{\mathfrak{L}^{\infty}_{[0,T]}(L^p)} \\
&\le & CT^{1-\frac{\A-1}{2p}} \|u\|_{L^q_{[0,T]}(L^{2(\A-1)})}^{\A-1} \|u\|_{\mathfrak{L}^{\infty}(L^p)} \\
&\le&\left\|  \int^t_0 U(t-\tau) N(u) d\tau \right\|_{\mathfrak{L}^{\infty}_{[0,T]}(L^p)}
\le CT^{1-\frac{\A-1}{2p}} \| u\|_{X_T}^{\A-1}.
\end{eqnarray*}
Collecting these estimates, we get
\begin{equation}
\|\mathscr{T}u\|_{X_T}\le C_1\|\phi\|_{L^p}+C_2T^{1-\frac{\A-1}{2p}} \|u\|_{X_T}^{\A} \label{mapstoitself}
\end{equation} 
Similarly, the corresponding difference estimate can be obtained:
\begin{equation}
\|\mathscr{T}u_1 -\mathscr{T}u_2 \|_{X_T}
\le CT^{1-\frac{\A-1}{2p}} \|u_1 -u_2 \|_{X_T}. \label{contractionmapping}
\end{equation}
Consequently, the existence of local solution in $\mathfrak{L}^{\infty}([0,T] ; L^p)\cap L^q([0,T] ; L^r)$ follows from a
standard contraction argument: if we put
\begin{equation*}
R\triangleq 2C_1 \|\phi \|_{L^p},\quad T\triangleq C_3 \|\phi \|_{L^p}^{-\frac{2p(\A-1)  }{ 2p-\A+1 } },
\end{equation*}
then $\mathscr{T}$ maps $\mathscr{V}(T,R)$ to itself and is a contraction mapping.  Therefore, we get a local solution of (\ref{inteq})
in $X_T$.  Uniqueness and continuous dependence on data can be shown by arguing as in the proof of the difference estimate (\ref{contractionmapping}).  
Moreover, using the bilinear estimate and arguing as above, we have for $t_1,t_2 \in [0,T]$
\begin{equation*}
\|U(-t_1)u(t_1)-U(-t_2)u(t_2) \|_{L^p} \le C|t_1-t_2|^{1-\frac{\A-1}{2p}} \|u\|_{X_T}^{\A-1},
\end{equation*}
from which we see that $u$ belongs to $C_{\CS}([0,T] ; L^p(\R))$.
Finally, the Hausdorff--Young type property (\ref{lsmoothing}) follows from Lemma \ref{H--Y} and this completes the proof of Theorem \ref{LWP} for the case $3\le \A<5$.  

*******************************************************************************************************
\fi

\noindent\textbf{Case 1}. \quad We assume $1<\A\le 3$.   Here, let $q,r$ be such that
\begin{equation*}
r=\A+1,\quad \frac{2}{q}+\frac{1}{r}=\frac{1}{p}.
\end{equation*}
Then it is easy to check that $(q,r)\in \mathscr{S}(p)$ if $p>(\A+1)/\A$.  Therefore, we have
\begin{eqnarray*}
\|U(t) \phi \|_{L^q_{[0,T]}(L^r)} &\le & C \|\phi \|_{L^p}.\\
\end{eqnarray*}
In order to estiamte $L^q(L^r)$ norm of the nonlinear part, we let $\gamma$ be determined by the relation
\begin{equation*}
2+\frac{1}{p}=\frac{2}{\gamma}+\frac{\A}{\A+1}.
\end{equation*}
Then the quadruple $(\beta,\kappa,\sigma,\rho)$ satisfies the assumption of Proposition \ref{inhomo} if $p>(\A+1)/\A$.  Therefore, we have
\begin{eqnarray*}
\left \|\int^t_0 U(t-s)N(u(s)) ds \right\|_{L^q_{[0,T]}(L^r)} &\le C &\left\| |u|^{\A-1} u \right\|_{L^{\gamma}_{[0,T]}(L^{\frac{\A+1}{\A}})} \\
&=&C \|u\|^{\A}_{L^{\A\gamma}_{[0,T]}(L^{\A+1})} \\
&\le & CT^{1-\frac{\A-1}{2p}} \|u\|_{L^q_{[0,T]}(L^r)},
\end{eqnarray*}
where we have used the H\"older's inequality in the last step and this is possible as long as $p>(\A-1)/2$.

Next we estimate $\mathfrak{L}^{\infty}_{[0,T]}(L^p)$-norm of the integral equation.  Clearly, $\|U(t)\phi \|_{\mathfrak{L}^{\infty}_{[0,T]}(L^p)}=\|\phi \|_{L^p}$.
Now we estimate $L^p$ norm of the Duhamel type term using (\ref{WDFS}).  We set
\begin{equation*}
\rho =\frac{\A+1}{\A},\quad \sigma =\left(1+\frac{1}{2(\A+1)}-\frac{1}{2p}\right)^{-1}.
\end{equation*}
Then $(\sigma', \rho') \in \widehat{\mathscr{S}}(p)$ if $p>(\A+1)/\A$ and $1<\A\le 3$.  Thus, by (\ref{WDFS}), we get
\begin{equation*}
\left\| \int^t_0 U(t-s) N(u(s))ds \right\|_{\mathfrak{L}^{\infty}_{[0,T]} (L^p)} \le \| s^{\frac{1}{p}-\frac{1}{2}} N(u)\|_{L^{\sigma}_{[0,T]}(L^{\rho})}\le
\left\|s^{\frac{1}{p}-\frac{1}{2}}  \|u\|_{L^{\A+1}}^{\A} \right\|_{L^{\sigma}_{[0,T]}}.
\end{equation*}
Applying H\"older's inequality with
\begin{equation*}
1+\frac{1}{2(\A+1)}-\frac{1}{2p}=\frac{3p-\A-1}{2p}+\frac{\A}{q}
\end{equation*}
we see that the right hand side is estimated by
\begin{equation*}
\|s ^{\frac{1}{p}-\frac{1}{2}} \|_{L^{\frac{2p}{3p-\A-1}}_{[0,T]}} \|u\|^{\A}_{L^q_{[0,T]}(L^r)}=
T^{1-\frac{\A-1}{2p}} \|u\|^{\A}_{L^q_{[0,T]}(L^r)}.
\end{equation*}
Collecting these estimates, we get
\begin{equation}
\|\mathscr{T}u\|_{X_T}\le C_1\|\phi\|_{L^p}+C_2T^{1-\frac{\A-1}{2p}} \|u\|_{X_T}^{\A} \label{mapstoitself}
\end{equation} 
as long as
\begin{equation*}
\max \left(\frac{\A-1}{2},\frac{4}{3},\frac{\A+1}{\A} \right) <p \le 2.
\end{equation*}
Similarly, the corresponding difference estimate can be obtained:
\begin{equation}
\|\mathscr{T}u_1 -\mathscr{T}u_2 \|_{X_T}
\le CT^{1-\frac{\A-1}{2p}} \|u_1 -u_2 \|_{X_T}. \label{contractionmapping}
\end{equation}
Consequently, the existence of local solution in $\mathfrak{L}^{\infty}([0,T] ; L^p)\cap L^q([0,T] ; L^r)$ follows from a
standard contraction argument: if we put
\begin{equation*}
R\triangleq 2C_1 \|\phi \|_{L^p},\quad T\triangleq C_3 \|\phi \|_{L^p}^{-\frac{2p(\A-1)  }{ 2p-\A+1 } },
\end{equation*}
then $\mathscr{T}$ maps $\mathscr{V}(T,R)$ to itself and is a contraction mapping.  Therefore, we get a local solution of (\ref{inteq})
in $X_T$.  Uniqueness and continuous dependence on data can be shown by arguing as in the proof of the difference estimate (\ref{contractionmapping}).  
Moreover, using the bilinear estimate and arguing as above, we have for $t_1,t_2 \in [0,T]$
\begin{equation*}
\|U(-t_1)u(t_1)-U(-t_2)u(t_2) \|_{L^p} \le C|t_1-t_2|^{1-\frac{\A-1}{2p}} \|u\|_{X_T}^{\A-1},
\end{equation*}
from which we see that $u$ belongs to $C_{\CS}([0,T] ; L^p(\R))$.
Finally, the Hausdorff--Young type property (\ref{lsmoothing}) follows from Lemma \ref{H--Y} and this completes the proof of Theorem \ref{LWP} for the case $1< \A\le 5$.  


\if0
***************************************************************

\bigskip
\noindent\textbf{Case 2}. \quad We assume $3\le \A<5$.  Let $(q,r)$ be such that
\begin{equation*}
r=\frac{4}{3}\A,\quad \frac{2}{q}+\frac{1}{r}=\frac{1}{p}.
\end{equation*}
Then $(\beta,\kappa)\in \mathscr{S}(p)$ if
\begin{equation*}
\frac{4\A}{4\A-3}<p\le 2.
\end{equation*}
Let $\gamma$ be determined by the relation
\begin{equation*}
2+\frac{1}{p}=\frac{2}{\gamma}+\frac{\A}{r}.
\end{equation*}
Then the quadruple $(\beta,\kappa,\gamma,\rho)$ satisfies the assumption of Propsition \ref{ } if
\begin{equation*}
2\ge p >\frac{4}{3}(\ge \frac{4\A}{4\A-3}).
\end{equation*}
Therefore, arguing as in Case 1, we get
\begin{eqnarray*}
\|U(t) \phi \|_{L^q_{[0,T]}(L^r)} &\le & C\|\phi \|_{L^p} \\
\left \|\int^t_0 U(t-s)N(u(s)) ds \right\|_{L^q_{[0,T]}(L^r)} &\le &CT^{1-\frac{\A-1}{2p}} \|u\|_{L^q_{[0,T]}(L^r)}
\end{eqnarray*}
as long as
\begin{equation*}
\max \left(\frac{\A-1}{2},\frac{4}{3}\right) <p\le 2.
\end{equation*}
Next we apply (\ref{WDFS}) with
\begin{equation*}
\rho=\frac{4}{3},\quad \sigma =\left(\frac{1}{2p}-\frac{1}{8} \right)^{-1}.
\end{equation*}
Note that $(\sigma',\rho') \in \mathscr{S}(p)$ for $p\in (4/3,2]$.

************************************************************
\fi

\bigskip

\noindent\textbf{Case 2}. \quad We assume $3\le \A<5$.  Let $q,r$ be determined by the relation
\begin{equation}
r=2(\A-1),\quad \frac{2}{q}+\frac{1}{r}=\frac{1}{p}. \label{case1}
\end{equation}
Then it is easy to check that $(q,r)\in \mathscr{S}(p)$ for $p$ in the range given by (\ref{proofprange}) and
we get
\begin{equation*}
\|U(t)\phi \|_{L^q_{[0,T]}(L^r)}\le C\|\phi \|_{L^p}
\end{equation*}
by the generalized homogeneous Strichartz estimates.
In order to estimate the $L^{q}(L^r)$-norm of the nonlinear part, let $\gamma$ be determined by
\begin{equation*}
2+\frac{1}{p}=\frac{2}{\gamma}+\frac{\A}{2(\A-1)}.
\end{equation*}
Then the quadruple $(\beta,\kappa,\sigma, \rho)\triangleq (q,r,\gamma,\frac{2(\A-1)}{\A})$ satisfies the assumption of Proposition \ref{inhomo}.  Thus we have
\begin{eqnarray*}
\left\| \int^t_0 U(t-\tau) N(u(s))ds \right\|_{L^q_{[0,T]]}(L^r)} &\le &C \left\| |u|^{\A-1}u \right\|_{L^\gamma_{[0,T]}(L^{\frac{2(\A-1) }{\A}})}\\
&=& \|u \|^{\A}_{L^{\A\gamma}_{[0,T]}(L^{2(\A-1)})} \\
&\le & T^{1-\frac{\A-1}{2p}} \| u\|_{L^q_{[0,T]}(L^r)}^{\A}.
\end{eqnarray*}
Next we estimate $\mathfrak{L}^{\infty}_{[0,T]}(L^p)$-norm of the integral equation.  
Clearly, $\|U(t)\phi\|_{\mathfrak{L}^{\infty}_{[0,T]}(L^p)}=\|\phi \|_{L^p}$.  Now we
use our key bilinear estimate (\ref{BLWDFS}) in stead of the weighted one (\ref{WDFS}) to control $\mathfrak{L}^{\infty}(L^p)$-norm of the nonlinear term.  We apply Proposition \ref{bilinear} with
\begin{equation*}
\rho=\frac{2p}{3p-2},\quad \sigma=\frac{4}{3}.
\end{equation*}
It is easy to see that this choice of $(\rho,\sigma)$ satisfies the assumption of the proposition as long as $p>4/3$.  Hence we get
\begin{eqnarray*}
\sup_{t\in [0,T]} \left\|\int^t_0 U(-s) N(u) ds \right\|_{L^p}
&\le& C\|u^{\A-1} \|_{L^{\frac{4}{3}}_{[0,T]} (L^2)} \|u\|_{\mathfrak{L}^{\infty}_{[0,T]}(L^p)} \\
&\le & CT^{1-\frac{\A-1}{2p}} \|u\|_{L^q_{[0,T]}(L^{2(\A-1)})}^{\A-1} \|u\|_{\mathfrak{L}^{\infty}(L^p)} \\
&\le&\left\|  \int^t_0 U(t-\tau) N(u) d\tau \right\|_{\mathfrak{L}^{\infty}_{[0,T]}(L^p)}
\le CT^{1-\frac{\A-1}{2p}} \| u\|_{X_T}^{\A-1}.
\end{eqnarray*}
Collecting these estimates, we get
\begin{equation}
\|\mathscr{T}u\|_{X_T}\le C_1\|\phi\|_{L^p}+C_2T^{1-\frac{\A-1}{2p}} \|u\|_{X_T}^{\A} \label{mapstoitself}
\end{equation} 
Similarly, the corresponding difference estimate can be obtained:
\begin{equation}
\|\mathscr{T}u_1 -\mathscr{T}u_2 \|_{X_T}
\le CT^{1-\frac{\A-1}{2p}} \|u_1 -u_2 \|_{X_T}. \label{contractionmapping}
\end{equation}
Arguing as in Case 1, we also get the local well-posedness result for $3\le \A<5$.

\if0
*********************************************************************
Consequently, the existence of local solution in $\mathfrak{L}^{\infty}([0,T] ; L^p)\cap L^q([0,T] ; L^r)$ follows from a
standard contraction argument: if we put
\begin{equation*}
R\triangleq 2C_1 \|\phi \|_{L^p},\quad T\triangleq C_3 \|\phi \|_{L^p}^{-\frac{2p(\A-1)  }{ 2p-\A+1 } },
\end{equation*}
then $\mathscr{T}$ maps $\mathscr{V}(T,R)$ to itself and is a contraction mapping.  Therefore, we get a local solution of (\ref{inteq})
in $X_T$.  Uniqueness and continuous dependence on data can be shown by arguing as in the proof of the difference estimate (\ref{contractionmapping}).  
Moreover, using the bilinear estimate and arguing as above, we have for $t_1,t_2 \in [0,T]$
\begin{equation*}
\|U(-t_1)u(t_1)-U(-t_2)u(t_2) \|_{L^p} \le C|t_1-t_2|^{1-\frac{\A-1}{2p}} \|u\|_{X_T}^{\A-1},
\end{equation*}
from which we see that $u$ belongs to $C_{\CS}([0,T] ; L^p(\R))$.
Finally, the Hausdorff--Young type property (\ref{lsmoothing}) follows from Lemma \ref{H--Y} and this completes the proof of Theorem \ref{LWP} for the case $3\le \A<5$.

*******************************************************************

\fi

\bigskip

\noindent\textit{Proof of Theorem \ref{SGWP}}  We assume $p=\frac{\A-1}{2}$.  We define
\begin{equation*}
X_{\infty} \triangleq \mathfrak{L}^{\infty} (\R ; L^p(\R)) \cap L^q (\R; L^r(\R) )
\end{equation*}
with the norm
\begin{equation*}
\|u\|_{X_{\infty}}\triangleq \max \left( \|u\|_{\mathfrak{L}^{\infty}(L^p)}, \|u\|_{L^q(L^r)} \right).
\end{equation*}

  Then, arguing as in the above proof one gets
\begin{equation}
\|\mathscr{T}u\|_{X_{\infty}} \le C_1 \|\phi \|_{L^p} +C_2 \|u\|_{X_{\infty}}^{\A} \label{mapstoitselfS}
\end{equation}
and
\begin{equation}
\|\mathscr{T}u_1 -\mathscr{T}u_2\|_{X_{\infty}} \le C_2 (\|u_1\|^{\A-1}_{X_{\infty}} +\|u_2|^{\A-1}_{X_{\infty}} )\|u_1 -u_2\|_{X_{\infty}}. \label{contractionmappingS}
\end{equation}
Now let $\epsilon >0$ be a positive real number and we suppose $\|\phi \|_{L^p}\le \epsilon$.  We then set
\begin{equation*}
\mathscr{V}_{\varepsilon}\triangleq \{ u\in X_{\infty}\, |\, \|u\|_{X_{\infty}} \le 2C_1\epsilon \, \}.
\end{equation*}
Then if $\epsilon$ is sufficiently small, we have by (\ref{mapstoitselfS})
\begin{equation*}
\|\mathscr{T}u \|_{X_{\infty}} \le C_1 \epsilon +C_2 (2C_1 \epsilon)^{\A} \le C1\epsilon +C_1 \epsilon =2C_1 \epsilon
\end{equation*}
for any $u \in \mathscr{V}_{\epsilon}$.  Similarly, for $u_1,u_2 \in\mathscr{V}_{\epsilon}$ we have
\begin{equation*}
\|\mathscr{T}u_1 -\mathscr{T}u_2 \|_{X_{\infty}} \le C_2 \left( (2C_1\epsilon)^{\A-1} +(2C_1 \epsilon)^{\A-1} \right) \|u_1 -u_2 \|_{X_{\infty}}
\le \frac{1}{2}\|u_1 -u_2\|_{X_{\infty}}
\end{equation*}
under some smallness assumption on $\epsilon$.
Consequently, if $\epsilon>0$ is sufficiently small we get a global solution such that
\begin{equation*}
\max \left( \|U(-t)u(t) \|_{L^{\infty} (L^p)}  , \|u\|_{L^q(L^r)} \right) \le 2C_1 \epsilon.
\end{equation*}
Moreover, by Proposition \ref{bilinear}, we have
\begin{equation*}
\sup_{t\in \R} \left\| \int^t_0 U(-s)N(u(s))ds \right\|_{L^p} \le C \|u\|_{L^q(L^r)}^{\A-1} \|U^{-1}u\|_{L^{\infty}(L^p)}<\infty,
\end{equation*}
from which it follows that
\begin{equation*}
M\triangleq \sup_{t\in \R} \|U(-t)u(t)\|_{L^p}<\infty
\end{equation*}
and the decay estimate
\begin{equation*}
\|u(t) \|_{L^{p'}} \le (4\pi |t|)^{-(\frac{1}{p}-\frac{1}{2})} \times M,\quad t\neq 0.
\end{equation*}
The other assertions in Theorem \ref{SGWP} can also be obtained by arguing as in the proof of Theorem \ref{LWP}.

\begin{rem}
The pair $(q,r)$ in the above proof belongs to $\mathscr{S}(p).$  However,
note also that $(q,r)$ may not be in $\widehat{\mathscr{S}}(p)$.  For example take $\A \in (3, 4)$ and $p$ sufficiently close to $4/3$.
\end{rem}

\begin{rem}

We emphasize that we cannot use (\ref{WDFS}) to obtain the existence results in Theorem \ref{LWP} and \ref{SGWP} for all $\A$'s in the range $(1,5)$.  In particular, it is easy to see that the existence of the weight
$s^{1/p-1/2}$ in (\ref{WDFS}) would be an obstacle to the construction of small global solutions in $L^q(\R ;L^r)$ spaces unless $(\A,p)=(5,2)$.
\end{rem}

\if0
**************************************************************:
 \section{ Proof of Theorem \ref{LGWP}}
 
 \begin{prop}
 Let $u$ be a global solution to (\ref{NLS}) such that
 \begin{equation*}
 u \in L^?_{loc}(L^r) 
 \end{equation*}
and
 \begin{equation*}
 U(-t)u(t)|_{[0,T) \times \R} \in C([0,T) ; L^p )
 \end{equation*}
 for some positive $T>0$.
 Then
 \begin{equation*}
 \sup_{t\in [0,T)} \|U(-t)u(t) \|_{L^p} <C_T<\infty.
 \end{equation*}
 Furthermore, if we assume moreover that
 \begin{equation*}
 u\in L^?(L^r)
 \end{equation*}
 then the constant $C_T$ can be determined induependent of $T$.
 \end{prop}

*******************************************************
\fi

\section{Proof of Theorem \ref{LGWP}} \label{secGWP}
In this section we prove the global well-posedness result for (\ref{NLS}) with $N(u)=|u|^{\A-1}u$ for large data $\phi \in L^p$.  Our strategy is inspired by the works\cite{Carles}, \cite{107H2}, where the authors prove global well-posedness result in $L^2\cap X$ for some non-$L^2$-based space $X$ but here we do not assume
square integrability of data $\phi$.  Hereafter, we only consider the case $t>0$.  The case of negative time can be treated in the similar manner.  Roughly speaking, the global well-posedness is achieved in two steps.  The first step is to construct a global solution in some suitable space-time Lebesgue space $L^q_{loc}(L^r)$.  
The existence of global solutions of this kind is known (see \cite{107T2},\cite{107}).  This is proved by means of a data-decomposition argument developed by Vargas and Vega \cite{VV} and Bourgain \cite{Bourgain}. Here our analysis is based on a global existence result in \cite{107}.  For convenience, we give 
a sketch of its proof  in appendix.  In the second step, we prove that the global solution $u$ given by the previous step satisfies $U(-t)u(t)|_{[-T,T]}\in C([-T, T] ; L^p)$ for any large $T>0$.  This is
a consequence of the local result (Theoerm \ref{LWP}) and a key proposition which asserts that $U(-t)u(t)|_{[0,\varepsilon)} \in C([0,\varepsilon) ; L^p)$ for at least sufficiently small $\varepsilon>0$ implies
$U(-t)u(t)|_{[0,\infty)} \in C([0,\infty) ; L^p)$.  This key proposition is presented and proved in subsection \ref{Sapriori}.

\bigskip
\noindent\textit{List of notation for Section \ref{secGWP}}.
\begin{itemize}
\item
For a given $r \ge 2$ and $p\in [1,2]$, $Q_p(r)$ is determined by the scaling relation
\begin{equation*}
\frac{2}{Q_p(r)}+\frac{1}{r}=\frac{1}{p}.
\end{equation*}
\item
Let $p\in [1,2]$ and let $u :\R\times \R \to \C$ be a global solution to (\ref{NLS}) such that
\begin{equation*}
u\in L^{Q_{p}(2\A-2)}_{loc}(\R ; L^{2\A-2}(\R)).
\end{equation*}
We define
\begin{equation}
T_{\max}(u)\triangleq \sup \{ T>0,\,\, |\,\, u|_{[0,T]\times \R} \in C_{\CS}([0,T] ;L^p(\R)) \,  \,\}.
\end{equation}

\end{itemize}

\subsection{Construction of a global solution via splitting argument}\label{Ssplitting}

In this subsection, we briefly review the global existence result in \cite{107} without property (\ref{ipersistency}) for data
$\phi \in L^p$.  

\begin{defn}
Let $p\in (1,2)$, $1<\A<5$, and $\gamma>0$.  Let $\mathcal{A}(p,\A+1,\gamma)$ be the set of functions in $\mathcal{S}'(\R)$ such that: there exist $(\varphi_N)_{N>1} \subset L^2(\R)$ and $(\psi_N)_{N>1}\subset \mathcal{S}'(\R)$ such that
\begin{equation}
\phi=\varphi_N+\psi_N,\quad \forall N>1, \label{decomp1}
\end{equation}
\begin{equation}
C_0^{-1} N^{\gamma} \le \|\varphi_N \|_{L^2} \le C_0 N^{\gamma},\quad \forall N>1,  \label{decomp2}
\end{equation}
and
\begin{equation}
\|U(t)\psi_N\|_{L_{\R}^{Q_p(\A+1)}  (L^{\A+1}) }\le C_0 N^{-1}, \label{decomp3}
\end{equation}
where $C_0>0$ is a constant independent of $N$.

\end{defn}

By means of a splitting argument, we get a local existence result for data in $\mathcal{A}(p,\A+1,\gamma)$ which 
leads directly to a global one.
\begin{prop}\label{gexisprop1}
Assume that
\begin{equation}
2>p >\max \left(\frac{2\A}{5},\frac{\A+1}{\A}\right). \label{pranggexis}
\end{equation}
Then, there is $N_0>1$ such that for each $N>N_0$ and $\phi \in \mathcal{A}(p,\A+1,\gamma)$ there exists a local solution $u:[0,T_N] \times \R\to \C$ of (\ref{NLS}) such that
\begin{equation*}
u \in C ([0,T_N] ; L^2(\R))\cap L^{Q_2(\A+1)}([0,T_N] ; L^{\A+1}) 
+L^{Q_p(\A+1)} ([0,T_N] ; L^{\A+1} )
\end{equation*}
where $T_N>0$ is of the form
\begin{equation*}
T_N \sim N^{1+\frac{2}{5-\A} (4-2\A-\frac{\A-1}{p})\gamma}.
\end{equation*}
Moreover, the following uniqueness assertion holds: let $u^{(N)}$ be the above solution $u$ given for each fixed $N>N_0$.  Then 
\begin{equation*}
u^{(N_1)} (t,x) =u^{(N_2)}(t,x),\quad a.a. (t,x) \in [0, \min (T_{N_1}, T_{N_2}) ] \times \R
\end{equation*}
for any $N_1,N_2 >N_0$.
\end{prop}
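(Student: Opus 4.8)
The plan is to run the Vargas--Vega/Bourgain splitting (as in \cite{VV}, \cite{Bourgain}) and reduce to the classical one--dimensional $L^2$ Strichartz theory of Y.~Tsutsumi \cite{yT}. Fix $N>1$ and take a decomposition $\phi=\varphi_N+\psi_N$ as in (\ref{decomp1})--(\ref{decomp3}). Put $v\triangleq U(t)\psi_N$, so that by (\ref{decomp3}) one has $\|v\|_{L^{Q_p(\A+1)}(L^{\A+1})}\le C_0N^{-1}$, while $\|\varphi_N\|_{L^2}\sim N^{\gamma}$ by (\ref{decomp2}). Seeking $u=v+w$ and using that $v$ solves the free equation, the remainder $w$ must solve
\begin{equation*}
w(t)=U(t)\varphi_N+i\int_0^t U(t-s)\,N(v(s)+w(s))\,ds .
\end{equation*}
I would look for $w$ in the $L^2$--admissible space $Y_T\triangleq C([0,T];L^2)\cap L^{Q_2(\A+1)}([0,T];L^{\A+1})$; then $u=v+w$ lands precisely in the sum space of the statement, since $v\in L^{Q_p(\A+1)}(L^{\A+1})$.

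Second, I would set up a contraction for $\Phi(w)\triangleq U(t)\varphi_N+i\int_0^t U(t-s)N(v+w)\,ds$ on a ball of $Y_T$. The homogeneous and inhomogeneous $L^2$ Strichartz estimates (\ref{L2Str})--(\ref{L2Strh}) give $\|\Phi(w)\|_{Y_T}\le C\|\varphi_N\|_{L^2}+C\|N(v+w)\|_{L^{\sigma}(L^{\rho})}$ for an admissible dual pair. Bounding $|N(v+w)|\le C(|v|+|w|)^{\A}$ by (\ref{power2}), the nonlinearity splits into a purely--$w$ monomial and monomials carrying at least one factor of $v$. The purely--$w$ term is the standard subcritical contribution and, after H\"older in time, gains a positive power $T^{(5-\A)/4}$. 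For each mixed monomial I would measure its $k\ge 1$ factors of $v$ in the norm $L^{Q_p(\A+1)}(L^{\A+1})$, where the smallness $N^{-k}$ from (\ref{decomp3}) is available, and distribute the remaining factors of $w$ by H\"older in space and time; the hypothesis (\ref{pranggexis}) on $p$ is exactly what makes all the exponents that occur admissible and legitimizes the dual pairs used in (\ref{L2Strh}). The difference estimate is obtained verbatim from the same splitting.

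Third, collecting these bounds produces $\|\Phi(w)\|_{Y_T}\le CN^{\gamma}+C\,T^{(5-\A)/4}\|w\|_{Y_T}^{\A}+(\text{mixed terms small in }N)$, together with a matching Lipschitz bound. Taking the radius $R\sim N^{\gamma}$ and choosing $T=T_N$ so that $\Phi$ maps the ball into itself and contracts, the binding constraint comes from the mixed $v$--$w$ monomials (which carry the $N^{-1}$ factors of (\ref{decomp3})); solving this balance yields the announced lifespan $T_N\sim N^{1+\frac{2}{5-\A}(4-2\A-\frac{\A-1}{p})\gamma}$. For the consistency assertion I would observe that, regardless of the splitting used, $u^{(N)}=v^{(N)}+w^{(N)}=U(t)\phi+i\int_0^t U(t-s)N(u^{(N)})\,ds$, i.e. each $u^{(N)}$ solves the original equation (\ref{inteq}) for the same datum $\phi$; moreover $u^{(N)}\in L^{Q_p(\A+1)}_{loc}(L^{\A+1})$ because $L^{Q_2(\A+1)}_{[0,T]}\subset L^{Q_p(\A+1)}_{[0,T]}$ on finite intervals. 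Hence on $[0,\min(T_{N_1},T_{N_2})]$ the difference solves a linear integral equation with zero data, and a Strichartz/Gronwall argument in $L^{Q_p(\A+1)}_{loc}(L^{\A+1})$ forces $u^{(N_1)}=u^{(N_2)}$ a.e.

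The step I expect to be hardest is the bookkeeping for the mixed monomials: $v$ is controlled only in the $L^p$--admissible norm $L^{Q_p(\A+1)}(L^{\A+1})$ and not in the $L^2$--admissible one, so one must choose the dual admissible pairs in (\ref{L2Strh}) and the H\"older time exponents so that every term closes with an honest positive power of $T$ and the correct negative power of $N$, and so that the resulting lifespan $T_N$ grows fast enough in $N$ to be iterated into a global solution in the next step.
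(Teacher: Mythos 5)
Your splitting is inverted relative to the paper's, and this costs you the mechanism that actually produces the lifespan $T_N$. The paper lets $v$ solve the full gauge-invariant NLS with the good datum $\varphi_N$ (so $\|v(t)\|_{L^2}=\|\varphi_N\|_{L^2}\sim N^{\gamma}$ for all $t$ by $L^2$-conservation, see (\ref{conservation})), and seeks the correction $w$, with datum $\psi_N$, as a \emph{small} object: $\|w\|_{L^{Q_p(r)}_{[0,\delta_N]}(L^r)}\le RN^{-1}$. You instead put the free flow on $\psi_N$ and ask $w$, with datum $\varphi_N$, to absorb all of the nonlinearity in the $L^2$-admissible space $Y_T$ on a ball of radius $R\sim N^{\gamma}$.

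The genuine gap is in your third step. With $\|w\|_{Y_T}\sim N^{\gamma}$, the purely-$w$ monomial contributes $C\,T^{\frac{5-\A}{4}}\|w\|_{Y_T}^{\A}\sim T^{\frac{5-\A}{4}}N^{\A\gamma}$, and forcing this to be $\lesssim N^{\gamma}$ pins $T\lesssim N^{-\frac{4(\A-1)}{5-\A}\gamma}$, which is exactly the short time $\delta_N$ of (\ref{deltadef}) and tends to $0$ as $N\to\infty$. So the binding constraint is \emph{not} the mixed $v$--$w$ monomials, and no bookkeeping of H\"older exponents can make a single contraction reach $T_N\sim N^{1+\frac{2}{5-\A}(4-2\A-\frac{\A-1}{p})\gamma}$, which under the hypotheses of Corollary \ref{gexiscor1} grows with $N$. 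The paper reaches $T_N$ only by \emph{iterating} the $\delta_N$-step construction: one checks that $u(\delta_N)=[v(\delta_N)+d(\delta_N)]+U(\delta_N)\psi_N$ is again a decomposition of type (\ref{decomp1})--(\ref{decomp3}), where $d$ is the accumulated $L^2$-valued Duhamel part of (\ref{CPP}), estimated via (\ref{NE2}); the number of admissible steps $k$, hence $T_N=k\delta_N$, is read off from (\ref{kdefinition}). Your decomposition obstructs this continuation: your $w$ obeys a forced equation with no conservation law, and the purely-$w$ part of its Duhamel term already has $L^2$-size comparable to $N^{\gamma}$ on $[0,\delta_N]$, so you cannot rule out that $\|w(j\delta_N)\|_{L^2}$ grows geometrically in $j$, which would cap the iteration at $O(\log N)$ steps. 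The paper's arrangement is designed precisely so that the large $L^2$ mass sits in the exactly conserved piece $v$ and only the small correction $d$ accumulates. Your uniqueness argument (both candidates solve (\ref{inteq}) for the same datum and lie in $L^{Q_p(\A+1)}_{loc}(L^{\A+1})$ by H\"older in time on bounded intervals) is fine in outline.
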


It is easy to see that $\lim_{N\to \infty} T_N=\infty$ under some additional assumptions on $\A,\gamma,p$ which means the solution can reach arbitrarily large time.  To be more precise, this existence and uniqueness assertion yields the following global existence result
for data $\phi \notin L^2$:
\begin{cor} \label{gexiscor1}
In addition to {\rm (\ref{pranggexis})}, assume moreover that
\begin{equation}
\frac{2}{5-\A} (4-2\A-\frac{\A-1}{p})\gamma >-1.
\end{equation}
Then for any $\phi \in \mathcal{A}(p,\A+1,\gamma)$, there exists a unique global solution $u$ of the form
\begin{equation*}
u \in C(\R ; L^2(\R)) \cap L_{loc}^{Q_2(\A+1)}(\R; L^{\A+1}) 
+L_{loc}^{Q_p(\A+1)} (\R; L^{\A+1} ) \subset L_{loc}^{Q_p(\A+1)} (\R; L^{\A+1} ).
\end{equation*}

\end{cor}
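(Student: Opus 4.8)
The plan is to deduce the corollary from Proposition \ref{gexisprop1} by sending $N\to\infty$. The only new input beyond the proposition is the added hypothesis $\frac{2}{5-\A}(4-2\A-\frac{\A-1}{p})\gamma>-1$, which is precisely the statement that the exponent of $N$ in the life-span estimate $T_N\sim N^{1+\frac{2}{5-\A}(4-2\A-\frac{\A-1}{p})\gamma}$ is strictly positive. Hence $T_N\to\infty$ as $N\to\infty$, so the local solutions $u^{(N)}$ furnished by Proposition \ref{gexisprop1} exist on time intervals that exhaust $[0,\infty)$. A global solution is then obtained by gluing the family $\{u^{(N)}\}_{N>N_0}$ along their common intervals of existence.

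First I would carry out the gluing. Fix $T>0$. Since $T_N\to\infty$, there is $N_1=N_1(T)>N_0$ with $T_N>T$ for all $N\ge N_1$, so that each such $u^{(N)}$ is defined on $[0,T]$. By the consistency clause of Proposition \ref{gexisprop1}, any two of these solutions coincide almost everywhere on the overlap of their existence intervals; in particular all restrictions $u^{(N)}|_{[0,T]}$ with $N\ge N_1$ agree. Therefore $u|_{[0,T]}\triangleq u^{(N)}|_{[0,T]}$ is well defined independently of the admissible index $N$, and letting $T\to\infty$ assembles a single function $u$ on $[0,\infty)\times\R$. Because each $u^{(N)}$ solves the integral equation (\ref{inteq}) on $[0,T_N]$, the glued function $u$ solves it on all of $[0,\infty)$, and its membership in $C([0,\infty);L^2)\cap L^{Q_2(\A+1)}_{loc}(L^{\A+1})+L^{Q_p(\A+1)}_{loc}(L^{\A+1})$ is inherited locally in time from the $u^{(N)}$.

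Next I would dispose of the routine remaining points. The final inclusion in the statement holds since, on any bounded time interval and with $p<2$, one has $Q_p(\A+1)<Q_2(\A+1)$, so that $L^{Q_2(\A+1)}(L^{\A+1})\hookrightarrow L^{Q_p(\A+1)}(L^{\A+1})$ in the time variable by H\"older's inequality; thus the first summand is absorbed into the second and $u\in L^{Q_p(\A+1)}_{loc}(\R;L^{\A+1})$. The negative-time half is obtained in exactly the same way using the time-reversal symmetry of (\ref{NLS}), producing a solution on all of $\R$. Uniqueness in $L^{Q_p(\A+1)}_{loc}(\R;L^{\A+1})$ follows from the local uniqueness already contained in Proposition \ref{gexisprop1}: two global solutions agree on every bounded interval by the same consistency argument. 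In truth there is no genuine obstacle here---the analytic substance lives entirely in Proposition \ref{gexisprop1}---so the only things to verify with care are that the life spans really diverge (which is exactly what the extra hypothesis guarantees) and that the function-space membership survives the gluing, both of which are immediate once stated.
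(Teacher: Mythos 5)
Your proposal is correct and follows exactly the route the paper intends: the extra hypothesis makes the exponent of $N$ in $T_N$ positive so that $T_N\to\infty$, and the uniqueness clause of Proposition \ref{gexisprop1} lets the local solutions $u^{(N)}$ be glued into a single global one. The paper itself gives no more than this one-line observation before stating the corollary, so your write-up (including the H\"older embedding $L^{Q_2(\A+1)}\hookrightarrow L^{Q_p(\A+1)}$ on bounded intervals and the time-reversal remark) is simply a more explicit version of the same argument.
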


This global existence theorem can apply to the case of $L^p$ Cauchy data.  This is due to the following lemma:
\begin{lem}\label{pdecomp}{\rm (\cite{107})}
Assume that
\begin{equation*}
2>p >\max \left(\frac{2\A}{5}, \frac{\A+1}{\A} \right).
\end{equation*}
Let $p_0$ be such that
\begin{equation*}
p>p_0 >\max \left(\frac{2\A}{5}, \frac{\A+1}{\A} \right).
\end{equation*}
Then
\begin{equation*}
L^p\setminus L^2 \subset \mathcal{A}(p_0 ,\A+1 ,\gamma)
\end{equation*}
with 
\begin{equation*}
\gamma=\frac{\frac{1}{p}-\frac{1}{2}}{\frac{1}{p_0}-\frac{1}{p}}.
\end{equation*}
\end{lem}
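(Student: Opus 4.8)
The plan is to prove this by the standard magnitude‑truncation decomposition, but with the threshold chosen so as to \emph{prescribe} the $L^2$-norm of the bulk part rather than the threshold itself. Given $\phi\in L^p\setminus L^2$ with $p<2$, for a level $\lambda>0$ I would split $\phi=\phi\mathbf{1}_{|\phi|\le\lambda}+\phi\mathbf{1}_{|\phi|>\lambda}$, using the low part as the candidate $\varphi_N$ and the high part as $\psi_N$. The two elementary inequalities driving everything, valid for $p_0<p<2$, are
\[
\|\phi\mathbf{1}_{|\phi|\le\lambda}\|_{L^2}^2 \le \lambda^{2-p}\|\phi\|_{L^p}^p,
\qquad
\|\phi\mathbf{1}_{|\phi|>\lambda}\|_{L^{p_0}}^{p_0}\le \lambda^{p_0-p}\|\phi\|_{L^p}^p,
\]
obtained by writing $|\phi|^2=|\phi|^p|\phi|^{2-p}$ and $|\phi|^{p_0}=|\phi|^p|\phi|^{p_0-p}$ and using $|\phi|\le\lambda$, resp. $|\phi|>\lambda$, on the relevant set.

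For the bulk part I set $g(\lambda)\triangleq\|\phi\mathbf{1}_{|\phi|\le\lambda}\|_{L^2}$. The first inequality gives $g(\lambda)\to0$ as $\lambda\to0$, while $\phi\notin L^2$ gives $g(\lambda)\to\infty$ as $\lambda\to\infty$ by monotone convergence; since $g$ is non-decreasing and, after optionally including a partial portion of a level set $\{|\phi|=\lambda\}$ of positive measure, attains every value in $(0,\infty)$, for each $N>1$ I can choose a measurable set $E_N$ with $\{|\phi|<\mu_N\}\subseteq E_N\subseteq\{|\phi|\le\mu_N\}$ so that $\varphi_N\triangleq\phi\mathbf{1}_{E_N}$ satisfies $\|\varphi_N\|_{L^2}=N^{\gamma}$ exactly; this gives (\ref{decomp2}). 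The key consequence is a lower bound on the threshold: from $N^{\gamma}=g(\mu_N)\le\mu_N^{(2-p)/2}\|\phi\|_{L^p}^{p/2}$ I obtain $\mu_N\ge cN^{2\gamma/(2-p)}$.

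Setting $\psi_N\triangleq\phi-\varphi_N=\phi\mathbf{1}_{E_N^c}$, the second inequality together with this threshold bound yields $\|\psi_N\|_{L^{p_0}}^{p_0}\le\mu_N^{p_0-p}\|\phi\|_{L^p}^p\le c'N^{\frac{2\gamma}{2-p}(p_0-p)}\|\phi\|_{L^p}^p$, and here the exponent closes exactly: with $\gamma=\frac{1/p-1/2}{1/p_0-1/p}=\frac{(2-p)p_0}{2(p-p_0)}$ one gets $\frac{2\gamma}{2-p}=\frac{p_0}{p-p_0}$, hence $\frac{2\gamma}{2-p}(p_0-p)=-p_0$ and $\|\psi_N\|_{L^{p_0}}\le C_0 N^{-1}$. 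Finally, since the hypothesis forces $p_0>\frac{\A+1}{\A}$ (and $p_0<p<2$), I would check that $(Q_{p_0}(\A+1),\A+1)\in\mathscr{S}(p_0)$, so the generalized homogeneous Strichartz estimate (\ref{StrLp}) gives $\|U(t)\psi_N\|_{L^{Q_{p_0}(\A+1)}(L^{\A+1})}\le C\|\psi_N\|_{L^{p_0}}\le C_0N^{-1}$, which is (\ref{decomp3}). Combined with (\ref{decomp1})--(\ref{decomp2}) this shows $\phi\in\mathcal{A}(p_0,\A+1,\gamma)$.

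The hard part is the \emph{two-sided} bound (\ref{decomp2}): the upper bound on $\|\varphi_N\|_{L^2}$ is immediate from truncation, but the lower bound fails for a fixed power-law threshold when $\phi$ is only marginally outside $L^2$ (for example a logarithmic divergence), since then $g$ grows more slowly than $\lambda^{(2-p)/2}$. The remedy, and the conceptual core of the argument, is to prescribe $\|\varphi_N\|_{L^2}=N^{\gamma}$ and let $\mu_N$ adjust; the only inequality then needed is $g(\mu_N)\le\mu_N^{(2-p)/2}\|\phi\|_{L^p}^{p/2}$, which forces $\mu_N$ to be at least the generic scale and therefore makes $\psi_N$ no larger than in the sharp case, so the tail bound still closes with the same $\gamma$. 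The measure-theoretic subtlety of jumps in $g$ is absorbed by the partial-level-set inclusion described above.
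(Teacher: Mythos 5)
Your proof is correct. The paper itself defers this lemma to \cite{107}, where the argument is the same magnitude-truncation splitting; your refinement of prescribing $\|\varphi_N\|_{L^2}=N^{\gamma}$ exactly (choosing the set $E_N$ through a partial level set, rather than fixing the threshold $\lambda_N=N^{2\gamma/(2-p)}$ outright) is a sound way to secure the lower bound in (\ref{decomp2}) when $\phi$ is only marginally outside $L^2$, and the exponent arithmetic $\frac{2\gamma}{2-p}(p_0-p)=-p_0$ together with the verification that $(Q_{p_0}(\A+1),\A+1)\in\mathscr{S}(p_0)$ under $p_0>(\A+1)/\A$ all check out.
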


Proposition \ref{gexisprop1} and Corollary \ref{gexiscor1} yields the
global existence theorem for large data $\phi \in L^p$ which is our first key proposition.
\begin{prop} \label{gexiscor2}
Assume that
\begin{equation*}
\max \left( \frac{(\A-1)(\A+3)}{2\A^2+2\A-4}, \frac{(\A-1)(3\A+5)}{2(\A^2-2\A+5)} \right)<p\le 2.
\end{equation*}
Then for any $\phi \in L^p$ there is a unique global solution $u$ of the form
\begin{equation*}
u \in L_{loc}^{Q_{p_0}(\A+1)}(L^{\A+1})
\end{equation*}
for some $p_0$ such that
\begin{equation*}
p>p_0 >\max \left(\frac{2\A}{5}, \frac{\A+1}{\A} \right).
\end{equation*}

\end{prop}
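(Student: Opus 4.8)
The plan is to derive the global existence result for large $L^p$-data as a direct consequence of Corollary \ref{gexiscor1} combined with the decomposition Lemma \ref{pdecomp}. The key observation is that every $\phi \in L^p$ (for $p$ in the stated range) can be placed inside one of the data classes $\mathcal{A}(p_0, \A+1, \gamma)$ for a suitable choice of the auxiliary exponent $p_0$ and the growth parameter $\gamma$, and then the splitting-based global existence theorem applies. First I would dispose of the trivial case $\phi \in L^2$: for such data the global solution in $L^{Q_2(\A+1)}_{loc}(L^{\A+1}) \subset L^{Q_{p_0}(\A+1)}_{loc}(L^{\A+1})$ is already furnished by the standard $L^2$-theory (Proposition A (iii)), so it remains only to treat $\phi \in L^p \setminus L^2$.

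For $\phi \in L^p \setminus L^2$, I would select an intermediate exponent $p_0$ with $p > p_0 > \max\left(\frac{2\A}{5}, \frac{\A+1}{\A}\right)$, which is possible precisely because the lower bound on $p$ in the hypothesis guarantees a nonempty interval; this is where the somewhat complicated threshold $\max\left( \frac{(\A-1)(\A+3)}{2\A^2+2\A-4}, \frac{(\A-1)(3\A+5)}{2(\A^2-2\A+5)} \right)$ enters. By Lemma \ref{pdecomp}, one has $\phi \in \mathcal{A}(p_0, \A+1, \gamma)$ with
\begin{equation*}
\gamma = \frac{\frac{1}{p}-\frac{1}{2}}{\frac{1}{p_0}-\frac{1}{p}}.
\end{equation*}
To invoke Corollary \ref{gexiscor1} with the exponent $p_0$ in place of $p$, I must verify its extra hypothesis, namely
\begin{equation*}
\frac{2}{5-\A}\left(4-2\A-\frac{\A-1}{p_0}\right)\gamma > -1,
\end{equation*}
together with the range condition $2 > p_0 > \max\left(\frac{2\A}{5}, \frac{\A+1}{\A}\right)$ already arranged above. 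Substituting the explicit value of $\gamma$ turns this into an inequality relating $p$ and $p_0$ alone; the main computational task is to show that this inequality can be satisfied by choosing $p_0$ close enough to $p$, and that the admissible choice exists exactly when $p$ exceeds the stated threshold.

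The main obstacle I anticipate is this last verification: one must confirm that the lifespan growth exponent condition from Corollary \ref{gexiscor1}, after inserting $\gamma$, is compatible with $p_0$ lying in the required open interval, and then optimize over $p_0 \in (\,\cdot\,, p)$ to obtain the sharp lower bound on $p$. This is a delicate but elementary exercise in manipulating the linear-fractional expressions; the two competing terms in the $\max$ defining the threshold should correspond respectively to the range constraint $p_0 > \frac{2\A}{5}$ (equivalently $p_0 > \frac{\A+1}{\A}$) and to the lifespan positivity constraint, with the larger of the two governing. Once a valid $p_0$ and $\gamma$ are fixed, Corollary \ref{gexiscor1} immediately yields the global solution $u \in L^{Q_{p_0}(\A+1)}_{loc}(L^{\A+1})$, and uniqueness in this class follows from the uniqueness assertion in Proposition \ref{gexisprop1} (consistency of the solutions $u^{(N)}$ across different truncation levels $N$), completing the proof.
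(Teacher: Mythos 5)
Your overall route is exactly the paper's: the paper proves this proposition in one line by combining Lemma \ref{pdecomp} (which places $\phi\in L^p\setminus L^2$ in $\mathcal{A}(p_0,\A+1,\gamma)$ with $\gamma=\frac{1/p-1/2}{1/p_0-1/p}$) with Corollary \ref{gexiscor1} applied at the exponent $p_0$, and your separate treatment of $\phi\in L^2$ via Proposition A (iii) is a harmless addition. However, there is one concrete error in the step you yourself flag as the main remaining task: you propose to verify the lifespan condition
\begin{equation*}
\frac{2}{5-\A}\Bigl(4-2\A-\frac{\A-1}{p_0}\Bigr)\gamma>-1
\end{equation*}
by ``choosing $p_0$ close enough to $p$.'' This is the wrong direction. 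Since $p_0<p\le 2$ we have $\gamma=\frac{1/p-1/2}{1/p_0-1/p}\ge 0$, and as $p_0\uparrow p$ the denominator $1/p_0-1/p$ tends to $0$, so $\gamma\to+\infty$; meanwhile for $\A\ge 2$ the prefactor $4-2\A-\frac{\A-1}{p_0}$ is strictly negative and bounded away from zero, so the left-hand side tends to $-\infty$ and the condition fails. The condition becomes \emph{easier} as $p_0$ decreases (one checks that $\bigl(2\A-4+\frac{\A-1}{p_0}\bigr)\gamma$ is increasing in $p_0$ on the admissible range), so the correct choice is $p_0$ close to the \emph{lower} endpoint $\max\bigl(\frac{2\A}{5},\frac{\A+1}{\A}\bigr)$.

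This is not merely cosmetic: carrying out the limit $p_0\downarrow \frac{2\A}{5}$ in the displayed inequality yields precisely the threshold $p>\frac{(\A-1)(3\A+5)}{2(\A^2-2\A+5)}$, i.e.\ the second entry of the $\max$ in the hypothesis, which confirms that the endpoint (not $p_0\approx p$) is where the sharp lower bound on $p$ comes from. With the direction of the optimization corrected, the rest of your argument (membership in $\mathcal{A}(p_0,\A+1,\gamma)$, invocation of Corollary \ref{gexiscor1}, and uniqueness via the consistency of the $u^{(N)}$ in Proposition \ref{gexisprop1}) matches the paper's intended proof.
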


\if0
***********************************************************
Now we present our first key proposition to the global well-posedness.  This is 
kind of a weaker version of  Corollary \ref{gexiscor1}.
\begin{prop}
Assume that
\begin{equation*}
\max \left( \frac{(\A-1)(\A+3)}{2\A^2+2\A-4}, \frac{(\A-1)(3\A+5)}{2(\A^2-2\A+5)} \right)<p\le 2.
\end{equation*}
Then for any $\phi \in L^p$ there is a unique global solution $u$ of the form
\begin{equation*}
u \in L_{loc}^{Q_{p_0}(\A+1)(L^{\A+1})}
\end{equation*}
with for some $p_0$ such that
\begin{equation*}
p>p_0 >\max \left(\frac{2\A}{5}, \frac{4}{3} \right).
\end{equation*}
\end{prop}

**********************************************************
\fi

\if0
*************************************************************
This section is devoted to the proof of global well-posedness result.  We consider the Cauchy problem of 
Schr\"odinger equation with the gauge invariant power type nonlinearity
\begin{equation}
iu_t +\Delta u+|u|^{\A-1}u=0,\quad u|_{t=0}=\phi  \label{PNLS}
\end{equation}
Find a solution $u$ to (\ref{PNLS}) in the form of $u=v+w$, where
\begin{equation*}
iv_t+\Delta v +|v|^{\A-1}v=0,\quad v|_{t=0}=\varphi_N
\end{equation*}
and
\begin{equation*}
iw_t+\Delta w +|w+v|^{\A-1}(w+v)=0.\quad w|_{t=0}=\psi_N
\end{equation*}

The global existence is assured.  And now, our local well-posedness results ( and uniqueness assertion, strictly speaking) tells us that the global solution satisfies
\begin{equation*}
u|_{[-T_0, T_0]\times \R} \in C_{\CS}([-T_0,T_0] ; L^p)
\end{equation*}
at least for some finite $T_0>0$.  Therefore, our goal is achieved if one may take $T_0$ arbitrarily large.  We first review the global existence result in \cite{107}.  
Let $r\ge 2$ and $p\le 2$.  We introduce the exponent $Q_p=Q_p(r)$ by
\begin{equation*}
\frac{2}{Q_p}+\frac{d}{r}=\frac{d}{p}.
\end{equation*}
We define the nonlinear integral operator $\mathcal{I}$ by
\begin{equation*}
\mathcal{I}(v_1,v_2,w) \triangleq \int^t_0 U(t-s) G(v,w_1,w_2) ds,
\end{equation*}
where 
\begin{equation*}
G(v, w_1,w_2)=|v+w_1|^{\A-1}(v+w_1)-|v+w_2|^{\A-1}(v+w_2)
\end{equation*}
 and consider two estimates for $\mathcal{I}$:
\begin{eqnarray}
\left\| \mathcal{I}(v,w_1,w_2)\right\|_{L_{[0,T]}^{Q_p}(L^r)} &\le &
C\biggl[T^{1-\frac{d(\A-1)}{4}} \|v\|^{\A-1}_{L^{Q_2}_{[0,T]}(L^r)} 
+T^{1-\frac{d(\A-1)}{2p}} \sum_{j=1}^2 \|w_j \|_{L^{Q_p}_{[0,T]}(L^r)}^{\A-1} \biggr] \\
&&\times \|w_1 -w_2\|_{L^{Q_p}_{[0,T]}(L^r)}. \notag \\
\left\| \mathcal{I}(v,w_1,w_2)\right\|_{L_{[0,T]}^{\infty}(L^2)} &\le &
C\biggl[T^{1-\frac{d(\A-1)}{4}-\frac{d}{2}(\frac{1}{p}-\frac{1}{2})} \|v\|^{\A-1}_{L^{Q_2}_{[0,T]}(L^r)} \|w\|_{L^{Q_p}_{[0,T]}(L^r)}
 \\&&+T^{1-\frac{d(\A-1)}{2p}-\frac{d}{2}(\frac{1}{p}-\frac{1}{2})} \|w\|_{L^{Q_p}_{[0,T]}(L^r)}^{\A} \biggr] 
. \notag 
\end{eqnarray}

\begin{defn}
Let $r$
\end{defn}

********************************************************
\fi

\subsection{Key Lemma}\label{Sapriori}
Now we state the second key proposition to Theorem \ref{LGWP}.

\begin{prop}\label{GWPKEY}
Let
\begin{equation*}
\A\ge 3,\quad \max \left( \frac{\A-1}{2}, \frac{4}{3}\right) <p \le 2,
\end{equation*}
and let $u$ be a global solution to (\ref{NLS}) such that
\begin{equation}
u\in L^{Q_{p_0}(2\A-2)}_{loc}(\R ; L^{2\A-2} (\R) ) \label{llebesgue}
\end{equation}
for some $p_0 \in (\frac{\A-1}{2}, 2]$.  Suppose that
\begin{equation}
U(-t)u(t)|_{[0,\varepsilon) \times \R} \in C([0,\varepsilon) \, ; L^p (\R) ). \label{persistencyhyp}
\end{equation}
for some $\varepsilon >0$.
Then
\begin{equation*}
U(-t)u(t)|_{[0,\infty) \times \R} \in C([0,\infty) ; L^p (\R)).
\end{equation*}

\end{prop}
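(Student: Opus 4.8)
The plan is to run a continuity (bootstrap) argument on the maximal time
$$
T_{\max}=T_{\max}(u)=\sup\{T>0 : u|_{[0,T]\times\R}\in C_{\CS}([0,T];L^p)\}.
$$
Hypothesis (\ref{persistencyhyp}) gives $T_{\max}\ge\varepsilon>0$, and the goal is $T_{\max}=\infty$. I would argue by contradiction, assuming $T_{\max}<\infty$. The heart of the matter is to show that the twisted norm $\|U(-t)u(t)\|_{L^p}$ stays bounded as $t\uparrow T_{\max}$, the key tool being the bilinear Duhamel estimate (\ref{BLWDFS}) fed by the a priori integrability (\ref{llebesgue}). Note that because $u$ is a \emph{global} solution lying in $L^{Q_{p_0}(2\A-2)}_{loc}(L^{2\A-2})$ and $[0,T_{\max}]$ is compact, the quantity $A\triangleq\|u\|_{L^{Q_{p_0}(2\A-2)}_{[0,T_{\max}]}(L^{2\A-2})}$ is finite; this finiteness is what drives everything.

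Applying $U(-t)$ to the integral equation (\ref{inteq}) gives, for $t_1<t_2$ in $[0,T_{\max})$,
$$
U(-t_2)u(t_2)-U(-t_1)u(t_1)=i\int_{t_1}^{t_2}U(-s)N(u(s))\,ds .
$$
Using $|N(u)|\le C|u|^{\A-1}|u|$ and choosing $f_1=C|u|^{\A-1}$, $f_2=u$ in (\ref{BLWDFS}) with $\sigma=4/3$ (so that the spatial exponent $\tfrac{\sigma}{2\sigma-2}$ equals $2$), I would obtain
$$
\left\|\int_{t_1}^{t_2}U(-s)N(u(s))\,ds\right\|_{L^p}\le C\|u\|_{L^{\frac{4(\A-1)}{3}}_{[t_1,t_2]}(L^{2(\A-1)})}^{\A-1}\sup_{s\in[t_1,t_2]}\|U(-s)u(s)\|_{L^p},
$$
which is legitimate on any subinterval of $(0,\infty)$ since Corollary \ref{DFS} is stated as a supremum over subintervals, and since the choice $\sigma=4/3$ is admissible for $p>4/3$ (guaranteed by the range hypothesis). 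The decisive point is a Hölder-in-time gain: because $Q_{p_0}(2\A-2)>\tfrac{4(\A-1)}{3}$ exactly when $p_0>\tfrac{\A-1}{2}$ (with equality at the critical value $p_0=\tfrac{\A-1}{2}$), I can trade the available exponent for $\tfrac{4(\A-1)}{3}$ at the cost of a positive power of the interval length. Partitioning $[0,T_{\max}]$ into finitely many subintervals $I_j$ on which the resulting factor is $\le\tfrac1{2C}$ (possible since $A<\infty$), the estimate closes a bootstrap on each $I_j$, yielding $\sup_{I_j}\|U(-t)u(t)\|_{L^p}\le 2\|U(-t_j)u(t_j)\|_{L^p}$; iterating over the finitely many pieces produces the uniform a priori bound $M\triangleq\sup_{[0,T_{\max})}\|U(-t)u(t)\|_{L^p}<\infty$.

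With $M<\infty$ secured, the same difference estimate shows $t\mapsto U(-t)u(t)$ is uniformly Cauchy as $t\uparrow T_{\max}$ (the factor $\|u\|_{L^{4(\A-1)/3}_{[t_1,t_2]}(L^{2(\A-1)})}$ tends to $0$), so the limit exists in $L^p$ and $u\in C_{\CS}([0,T_{\max}];L^p)$. I would then restart the local theory at time $T_{\max}$: solving
$$
w(t)=U(-T_{\max})u(T_{\max})+i\int_{T_{\max}}^{t}U(-s)N(U(s)w(s))\,ds
$$
for $w=U(-\cdot)u(\cdot)$ by the contraction argument of Theorem \ref{LWP} — valid on $[T_{\max},T_{\max}+\delta]$ because (\ref{BLWDFS}) and the inhomogeneous estimate hold on subintervals of $(0,\infty)$ — gives a solution with the twisted persistence on $[T_{\max},T_{\max}+\delta]$, with $\delta$ bounded below in terms of $\|U(-T_{\max})u(T_{\max})\|_{L^p}\le M$. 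Identifying this continuation with $u$ via uniqueness in the space-time Lebesgue class then extends the twisted persistence to $[0,T_{\max}+\delta]$, contradicting the definition of $T_{\max}$; hence $T_{\max}=\infty$.

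The main obstacle I anticipate is the a priori bound: arranging the exponents so that a single factor $u$ carries the twisted $L^p$ norm while the remaining $|u|^{\A-1}$ is absorbed by the known integrability (\ref{llebesgue}) \emph{with a genuine gain} (a positive power of the interval length). The strict inequality $p_0>\tfrac{\A-1}{2}$ is precisely what supplies this gain, and the lower bounds on $p$ are what keep $(\sigma',\rho')$ admissible for (\ref{BLWDFS}). A secondary technical point is ensuring the locally constructed continuation lies in the same uniqueness class as $u$, so that the two solutions can be glued.
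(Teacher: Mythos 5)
Your proposal is correct and follows essentially the same strategy as the paper: derive an a priori bound for $\sup_{t<T_{\max}}\|U(-t)u(t)\|_{L^p}$ from the bilinear Duhamel estimate together with the finiteness of $\|u\|_{L^{Q_{p_0}(2\A-2)}_{[0,T_{\max}]}(L^{2\A-2})}$, then contradict maximality by restarting the local theory at $T_{\max}$ and gluing via uniqueness in the space-time Lebesgue class. The only real divergence is how the a priori bound is closed: the paper proves a separate variant of the bilinear estimate (Lemma \ref{apriorilem}) in which the twisted norm is placed in $L^{\gamma}_t(L^p)$ for a \emph{finite} $\gamma$ — the interpolation range $\gamma\in[\tfrac{4}{5-\A},\infty)$ being exactly what the hypothesis $p_0>\tfrac{\A-1}{2}$ permits — and then applies Gronwall to $\omega(t)=\|U(-t)u(t)\|_{L^p}^{\gamma}$; you instead keep the $L^\infty_t$ version (\ref{BLWDFS}) and absorb by partitioning $[0,T_{\max}]$ into finitely many intervals on which $C\|u\|^{\A-1}_{L^{4(\A-1)/3}(L^{2\A-2})}\le\tfrac12$, your Hölder-in-time gain playing the role of the paper's exponent interpolation. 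These are interchangeable devices; your route avoids Lemma \ref{apriorilem} entirely, while the paper's packages the continuation step as a blow-up alternative (Lemma \ref{blowupalt}) rather than spelling it out as you do. One small point to make explicit in your absorption step: the quantity $\sup_{I_j}\|U(-t)u(t)\|_{L^p}$ must be known finite before it can be absorbed, which follows from working on $[t_j,T]$ with $T<T_{\max}$ (where continuity of $U(-t)u(t)$ is available by definition of $T_{\max}$) and then letting $T\uparrow T_{\max}$ — the same caveat is implicit in the paper's Gronwall argument.
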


\medskip

To prove the proposition, we need a few lemmata.  We first need a slight variant of the key bilinear estimate in Section \ref{SectionKey}.
\begin{lem}\label{apriorilem}
Let $\A,p,p_0$ be such that
\begin{equation*}
3 \le \A,\quad \max \left(\frac{\A-1}{2},\frac{4}{3} \right)<p_0<p\le 2.
\end{equation*}

Then there exists $\gamma \in [\frac{4}{5-\A},\infty)$ depending only on $p$ such that the following estimate holds for any $t>0$:
\begin{equation*}
\left\| \int^t_0 U(-s)N(u(s)) ds \right\|_{L^p} \le C \|u \|_{L_{[0,t]}^{Q_{p_0}(2\A-2)} (L^{2\A-2})} \|U^{-1} u \|_{L^{\gamma}_{[0,t]}(L^p)}.
\end{equation*}

\end{lem}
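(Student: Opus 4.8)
The plan is to reuse the entire analytic machinery behind Proposition \ref{bilinear} and to modify only its final time-integration step, trading the $L^{\infty}_s$-control of the free-evolution factor for finite $L^{\gamma}_s$-control at the cost of a finite-in-time norm on the remaining factor. Since the nonlinearity is gauge invariant, $N(u)=|u|^{\A-1}u$, I would set $f_1\triangleq |u|^{\A-1}$ and $f_2\triangleq u$, so that $|N(u)|=|f_1||f_2|$ holds exactly and the Duhamel term falls under the scope of Proposition \ref{bilinear}. The decisive choice is $\sigma=\tfrac43$, with $\rho$ defined by $1/\rho=3/2-1/p$; then $(\sigma',\rho')\in\mathscr{S}(p)$ precisely when $p>4/3$, which is exactly why that hypothesis must be imposed. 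The point of this choice is that $\sigma/(2\sigma-2)=2$, so the $f_1$-factor is measured in $L^2$ in space and $\||u|^{\A-1}\|_{L^2}=\|u\|_{L^{2\A-2}}^{\A-1}$ produces precisely the spatial exponent $2\A-2$ demanded by the hypothesis (\ref{llebesgue}).

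Concretely, I would start from the weighted estimate (\ref{WDFS}) of Proposition \ref{bilinear}(i) and from the pointwise-in-$s$ bound established in the course of proving (\ref{BLWDFS}), which rests on the factorization $U(-s)=M_s^{-1}\mathcal{F}^{-1}D_s^{-1}M_s^{-1}$, Corollary \ref{DFS}, the dilation identity $\|D_s^{-1}f\|_{L^k}=c|s|^{1/2-1/k}\|f\|_{L^k}$, and the Hausdorff--Young inequality. With the above choices this reads
\[
s^{\frac{1}{p}-\frac{1}{2}}\|N(u(s))\|_{L^{\rho}}\le C\,\|u(s)\|_{L^{2\A-2}}^{\A-1}\,\|U(-s)u(s)\|_{L^{p}},
\]
and up to this line nothing is new — the $s$-weight has already been absorbed. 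The only change comes at the end: instead of placing $\|U(-s)u(s)\|_{L^p}$ in $L^{\infty}_s$, I would take the $L^{\sigma}_{[0,t]}=L^{4/3}_{[0,t]}$ norm in $s$ and split it by H\"older's inequality in time according to $\tfrac34=\tfrac1b+\tfrac1\gamma$, obtaining
\[
\left\|\int_0^t U(-s)N(u(s))\,ds\right\|_{L^p}\le C\,\bigl\|\,\|u(s)\|_{L^{2\A-2}}^{\A-1}\,\bigr\|_{L^{b}_{[0,t]}}\,\|U^{-1}u\|_{L^{\gamma}_{[0,t]}(L^p)}.
\]
I then fix $b$ by $b(\A-1)=Q_{p_0}(2\A-2)$, so the first factor is $\|u\|_{L^{Q_{p_0}(2\A-2)}_{[0,t]}(L^{2\A-2})}^{\A-1}$ — the asserted norm, appearing, as in (\ref{introbilinear}), to the power $\A-1$. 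The exponent $\gamma$ is thereby forced, and using the scaling relation defining $Q_{p_0}(2\A-2)$ one simplifies $\tfrac1\gamma=\tfrac34-\tfrac{\A-1}{Q_{p_0}(2\A-2)}=1-\tfrac{\A-1}{2p_0}$.

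The only genuine work is the exponent bookkeeping, i.e. verifying that this $\gamma$ is admissible and lands in $[\tfrac{4}{5-\A},\infty)$. Finiteness of $\gamma$ is equivalent to $p_0>\tfrac{\A-1}{2}$, and $\gamma\ge\tfrac{4}{5-\A}$ reduces, after cancelling the factor $\A-1>0$, to $p_0\le 2$; both hold by hypothesis, with $\gamma=\tfrac{4}{5-\A}$ at $p_0=2$ and $\gamma\to\infty$ as $p_0\downarrow\tfrac{\A-1}{2}$. I would also record that $1/b,1/\gamma\in[0,3/4]$ so the H\"older split is legitimate, and that $\sigma=4/3\le 2$ keeps us inside the hypotheses of Proposition \ref{bilinear}. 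The estimate holds for each fixed $t>0$ since the whole argument is run with the Duhamel integral truncated at $t$, so no supremum over the endpoint is needed. I expect the main subtlety to be purely this simultaneous matching of the two target norms — the spatial--temporal norm forced by (\ref{llebesgue}) and the free-evolution norm desired in the conclusion — which, as the computation shows, pins $\sigma$ to $4/3$ and leaves $\gamma$ as the single output determined by $p_0$ (so, strictly, $\gamma$ depends on $\A$ and $p_0$, the role of $p$ entering only through the constraint $p_0<p$).
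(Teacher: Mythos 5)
Your proposal is correct and follows essentially the same route as the paper: the same choice $\sigma=4/3$, $\rho=\frac{2p}{3p-2}$ in the weighted Duhamel estimate (\ref{WDFS}), the same pointwise-in-$s$ factorization $s^{\frac1p-\frac12}\|N(u(s))\|_{L^{\rho}}\le C\|u(s)\|_{L^{2\A-2}}^{\A-1}\|U(-s)u(s)\|_{L^p}$, and the same H\"older split of the $L^{4/3}_{[0,t]}$ norm in time, the only difference being that you solve explicitly for $\gamma$ from $p_0$ whereas the paper leaves $\gamma$ free and checks that the induced time exponent $\frac{4\gamma(\A-1)}{3\gamma-4}$ sweeps out exactly the range $(Q_2(2\A-2),Q_{\frac{\A-1}{2}}(2\A-2))$. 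Your two side remarks are also accurate: the first factor should indeed carry the power $\A-1$ (dropped in the displayed statement), and $\gamma$ is genuinely determined by $p_0$ (and $\A$) rather than by $p$.
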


\begin{proof}
We put
\begin{equation*}
\rho=\frac{2p}{3p-2},\quad \sigma=\frac{4}{3}
\end{equation*}
in (\ref{WDFS}) to obtain
\begin{eqnarray*}
\left\| \int^t_0 U(-s)N(u(s)) ds \right\|_{L^p} &\le &C\| s^{\frac{1}{2}-\frac{1}{p}} N(u(s)) \|_{L^{\sigma}_{[0,t]} (L^{\rho})} \\
&\le & \left\| \| |u(s)|^{\A-1} \|_{L^2} \|U(-s)u(s) \|_{L^p} \right\|_{L^{\frac{4}{3}}_{[0,t]}} \\
&\le & C \|u \|_{L_{[0,t]}^{\frac{4\gamma (\A-1)}{3\gamma -4}} (L^{2(\A-1)})} \|U^{-1} u \|_{L^{\gamma}_{[0,t]}(L^p)}.
\end{eqnarray*}

\if0
*****************************************************************************************
We have
\begin{eqnarray*}
\left\| \int^t_0 U(-s) N(u(s)) ds \right\|_{L^p} &\le & \left\| \| |u(s)\|^{\A-1}_{L^2} \|U(-s) u(s) \|_{L^p} \right\|_{L^{\frac{4}{3}}_{[0,t]}} \\
&\le & \left\| \|u(s)\|^{\A-1}_{L^{2(\A-1)}} \|U^{-1} u(s) \|_{L^p} \right\|_{L^{\frac{4}{3}}_{ [0,t] }}.
\end{eqnarray*}
\begin{equation*}
\left\| \int^t_0 U(-s)N(u(s)) ds \right\|_{L^p} \le C \|u \|_{L_{[0,t]}^{\frac{4\gamma (\A-1)}{3\gamma -4}} (L^{2\A-2})} \|U^{-1} u \|_{L^{\gamma}_{[0,t]}(L^p)}
\end{equation*}

******************************************************************************************
\fi

We get the assertion since
\begin{equation*}
\lim_{\gamma \searrow \frac{4}{5-\A}} \frac{4\gamma (\A-1)}{3\gamma -4}=Q_2(2\A-2),\qquad \lim_{\gamma \to \infty} \frac{4\gamma (\A-1)}{3\gamma -4}=Q_{\frac{\A-1}{2}}(2\A-2).
\end{equation*}
\end{proof}

  The following
lemma is obtained arguing as in the proof of the difference estimate in $L^q(L^r)$-spaces in Section \ref{SLWP}.
\begin{lem}\label{GWPunique}
\item
Assume that
\begin{equation*}
p_0 >\max \left(\frac{\A-1}{2}, \frac{2(\A-1)}{\A} \right).
\end{equation*}
Let $T\ge 0$ and $\varepsilon_j>0, \,j=1,2>0$ and let $u_j :[0,T+\varepsilon_j]\times \R \to\C,\,\,j=1,2$ be two solutions to (\ref{NLS}) such that
\begin{equation*}
u_1(T)=u_2(T)
\end{equation*}
and
\begin{equation*}
u_j \in L^{Q_{p_0}(\A-2)}([T,T+\varepsilon_j] ; L^{2\A-2}(\R)).
\end{equation*}
Then
\begin{equation*}
u_1(t,x)=u_2(t,x),\quad {\rm a.a.}\,(t,x) \in [T,T+\min(\varepsilon_1,\varepsilon_2)] \times \R.
\end{equation*}
\end{lem}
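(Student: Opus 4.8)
The plan is to run a difference estimate in the single space-time Lebesgue space $L^q_{[T,T+\delta]}(L^r)$ with $r=2\A-2$ and $q=Q_{p_0}(2\A-2)$, close a short-time uniqueness argument, and then propagate the coincidence along the whole interval by continuation. Since $u_1$ and $u_2$ both solve (\ref{NLS}) and agree at $t=T$, they satisfy the Duhamel formula based at $T$; subtracting and using $u_1(T)=u_2(T)$ gives
\begin{equation*}
u_1(t)-u_2(t)=i\int_T^t U(t-s)\bigl[N(u_1(s))-N(u_2(s))\bigr]\,ds,\qquad t\in[T,T+\min(\varepsilon_1,\varepsilon_2)].
\end{equation*}
This reduces the statement to an a priori contraction estimate exactly of the type already produced for the map $\mathscr{T}$ in Section \ref{SLWP}.

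First I would fix the exponents precisely as in Case 2 of the proof of Theorem \ref{LWP}: take $\rho=\frac{2(\A-1)}{\A}$ and let $\gamma$ solve $2+\frac{1}{p_0}=\frac{2}{\gamma}+\frac{\A}{2(\A-1)}$, so that the quadruple $(q,r,\gamma,\rho)$ meets the hypotheses of Proposition \ref{inhomo}; the requirement $\frac{1}{\gamma}<1$ is precisely where the assumption $p_0>\frac{2(\A-1)}{\A}$ enters. Applying Proposition \ref{inhomo} on $[T,T+\delta]$, then the Lipschitz bound (\ref{power2}), Hölder in space (placing $|u_1|^{\A-1}+|u_2|^{\A-1}$ in $L^2_x$ and $|u_1-u_2|$ in $L^{2\A-2}_x$, which is licensed by $\frac{1}{2}+\frac{1}{2\A-2}=\frac{\A}{2(\A-1)}=\frac{1}{\rho}$), and finally Hölder in time to extract a positive power of the interval length, yields
\begin{equation*}
\|u_1-u_2\|_{L^q_{[T,T+\delta]}(L^r)}\le C\,\delta^{\,1-\frac{\A-1}{2p_0}}\Bigl(\|u_1\|^{\A-1}_{L^q_{[T,T+\delta]}(L^r)}+\|u_2\|^{\A-1}_{L^q_{[T,T+\delta]}(L^r)}\Bigr)\|u_1-u_2\|_{L^q_{[T,T+\delta]}(L^r)},
\end{equation*}
where the exponent $1-\frac{\A-1}{2p_0}$ is positive because $p_0>\frac{\A-1}{2}$. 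A short computation with the scaling relations confirms that the leftover time exponent is exactly $1-\frac{\A-1}{2p_0}$, so both standing hypotheses on $p_0$ are used and nothing more.

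By the absolute continuity of $s\mapsto\|u_j(s)\|_{L^r}^q$, which holds because $u_j\in L^{Q_{p_0}(2\A-2)}_{loc}(L^{2\A-2})$, the quantity $\|u_j\|_{L^q_I(L^r)}$ is small on every sufficiently short subinterval $I$, uniformly in the location of $I$. Hence there is a length $\delta_0>0$, independent of the base point, making the prefactor $C\,\delta^{1-\frac{\A-1}{2p_0}}(\|u_1\|^{\A-1}+\|u_2\|^{\A-1})$ strictly less than $\frac{1}{2}$ whenever $\delta\le\delta_0$; this forces $\|u_1-u_2\|_{L^q_{[T,T+\delta_0]}(L^r)}=0$, that is, $u_1=u_2$ a.e.\ on $[T,T+\delta_0]\times\R$.

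I would then close by continuation. Set $T^\ast=\sup\{\tau\le T+\min(\varepsilon_1,\varepsilon_2):u_1=u_2\text{ a.e.\ on }[T,\tau]\times\R\}$. If $T^\ast$ were strictly below $T+\min(\varepsilon_1,\varepsilon_2)$, then $u_1(T^\ast)=u_2(T^\ast)$, and rerunning the short-time argument based at $T^\ast$ with the same uniform length $\delta_0$ would extend the coincidence set beyond $T^\ast$, contradicting maximality; hence $T^\ast=T+\min(\varepsilon_1,\varepsilon_2)$. The main obstacle is exactly this continuation step: one must guarantee that the uniqueness length $\delta_0$ does not shrink to zero as the base point advances, and this is supplied by the uniform absolute continuity of the $L^q(L^r)$ norm of the solutions. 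The admissibility verification for Proposition \ref{inhomo} is routine but must be recorded, since it underpins the very first inequality.
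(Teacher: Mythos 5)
Your proposal is correct and follows essentially the same route the paper intends: the paper disposes of this lemma by pointing to the difference estimate in $L^q(L^r)$ from Section \ref{SLWP} (Case 2, with $p_0$ in place of $p$, i.e. Proposition \ref{inhomo} with $\rho=\frac{2(\A-1)}{\A}$, H\"older in space and time producing the factor $\delta^{1-\frac{\A-1}{2p_0}}$), and your continuation step with a uniform short-time length $\delta_0$ coming from the absolute continuity of $\|u_j\|_{L^q(L^r)}^q$ is the standard way to upgrade the local statement to the whole interval. Your identification of where each hypothesis on $p_0$ is used matches the paper's setup, so no further comment is needed.
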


\bigskip

\begin{lem}\label{blowupalt}
Assume that $\A\ge 3$ and
\begin{equation}
\max \left(\frac{\A-1}{2},\frac{2(\A-1)}{\A}, \frac{\A+1}{\A} \right) <p \le 2. \label{blowupcondp}
\end{equation}

Let $u$ be a global solution to (\ref{NLS}) such that
\begin{equation*}
u \in L_{loc}^{Q_p(2\A-2)} (\R ; L^{2\A-2}(\R)).
\end{equation*}
Suppose that $0<T_{\max}(u)<\infty$.  Then
\begin{equation}
\lim_{t\nearrow T_{\max}(u)} \|U(-t)u(t) \|_{L^p} =\infty. \label{blowup}
\end{equation}

\end{lem}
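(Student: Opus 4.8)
The plan is to prove this blow-up alternative by contraposition, reducing it to the uniform lower bound on the life span furnished by Theorem \ref{LWP}. Write $T_* = T_{\max}(u)$ and set $h(t) = \|U(-t)u(t)\|_{L^p}$; by the definition of $T_*$ together with the continuity conclusion of Theorem \ref{LWP}, the function $h$ is finite and continuous on $[0,T_*)$. Since $\liminf_{t\nearrow T_*} h(t)=+\infty$ is equivalent to the full limit (\ref{blowup}), it suffices to derive a contradiction from the assumption that there exist $M<\infty$ and a sequence $t_n \nearrow T_*$ with $h(t_n)\le M$.

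First I would restart the equation at each time $t_n$. Put $\psi_n = U(-t_n)u(t_n)$, so that $\|\psi_n\|_{L^p}=h(t_n)\le M$ and $u(t_n)=U(t_n)\psi_n$. Because the homogeneous Strichartz estimate (\ref{StrLp}) and the inhomogeneous and bilinear estimates used in Section \ref{SLWP} are invariant under time translation with constants independent of the underlying interval, the contraction argument there (Case 2, since $\A\ge 3$ and $r=2(\A-1)=2\A-2$) applies verbatim to the integral equation based at $t_n$. Running it in the ball of radius $R=2C_1 M$, the self-mapping and contraction constraints $C_2\tau^{1-\frac{\A-1}{2p}}R^{\A}\le C_1 M$ and $C_2\tau^{1-\frac{\A-1}{2p}}R^{\A-1}\le \tfrac12$ are both met as soon as $\tau \sim M^{-\frac{2p(\A-1)}{2p-\A+1}}$, a quantity that does not depend on $n$. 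This yields, for every $n$, a solution $\tilde u_n \in C_{\CS}([t_n,t_n+\tau];L^p)\cap L^q([t_n,t_n+\tau];L^{2\A-2})$ with $\tilde u_n(t_n)=u(t_n)$ and the same fixed $\tau>0$.

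Next I would glue $\tilde u_n$ to $u$ by uniqueness. Since $u$ is global and lies in $L^{Q_p(2\A-2)}_{loc}\subset L^{Q_{p_0}(2\A-2)}_{loc}$, while $\tilde u_n\in L^{Q_p(2\A-2)}([t_n,t_n+\tau];L^{2\A-2})$, Lemma \ref{GWPunique} with initial time $T=t_n$ gives $\tilde u_n=u$ a.e.\ on $[t_n,t_n+\tau]\times\R$. Hence $U(-t)u(t)$ agrees on $[t_n,t_n+\tau]$ with the continuous $L^p$-valued map $U(-t)\tilde u_n(t)$; combined with $u\in C_{\CS}([0,t_n];L^p)$ (valid because $t_n<T_*$) and the matching at $t=t_n$, this shows $u\in C_{\CS}([0,t_n+\tau];L^p)$. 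Choosing $n$ so large that $t_n+\tau>T_*$, which is possible because $t_n\to T_*$ and $\tau$ is fixed, contradicts the maximality in the definition of $T_*=T_{\max}(u)$. Therefore $\liminf_{t\nearrow T_*}h(t)=\infty$ and (\ref{blowup}) follows.

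The main obstacle, and the only point requiring genuine care, is the uniformity of the life span $\tau$ across the restart times $t_n$: one must confirm that the time delivered by Theorem \ref{LWP} depends monotonically on, and only on, the $L^p$-norm of the twisted data $\psi_n$ and not on the base time $t_n$. This is exactly where the interval-independence of the Strichartz constant in (\ref{StrLp}) is essential, since the linear contribution $\|U(t)\psi_n\|_{L^q_{[t_n,t_n+\tau]}(L^{2\A-2})}\le C\|\psi_n\|_{L^p}\le CM$ need not be small but is uniformly bounded, so that the smallness of $\tau$ can absorb the nonlinear terms for all $n$ simultaneously.
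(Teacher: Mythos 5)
Your proposal is correct and follows essentially the same route as the paper: argue by contradiction from a bounded sequence $t_n\nearrow T_{\max}(u)$, restart the local well-posedness argument at $t_n$ with a life span depending only on the bound $M$, glue via Lemma \ref{GWPunique}, and contradict the maximality of $T_{\max}(u)$. The paper phrases the restart through the transformed variable $v(t)=U(-t)u(t)$ and a restated claim of Theorem \ref{LWP}, but this is only a cosmetic difference from your direct restart of the integral equation at $t_n$.
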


\begin{proof}
Following \cite{Zhou} we introduce the linear transformation $v(t)\triangleq U(-t)u(t)$.  Then $v$ solves the integral equation
\begin{equation}
v(t)=\phi +i\int^t_0 U(-s) N(U(s)v(s)) ds \label{vint}
\end{equation}
and Theorem \ref{LWP} can be rephrased as follows:\\
\noindent\textbf{Claim.}
\textit{Assume {\rm(\ref{blowupcondp})} and let $M>0$.  Then for any $\phi \in L^p$ with $\|\phi\|_{L^p}\le M$ there are $T\triangleq T(M)>0$ and a unique local solution to {\rm(\ref{vint}) }such that
\begin{equation*}
v\in C([0,T] ;L^p (\R)) \cap \{ v:[0,T] \times \R \to \C \,|\, U(t)v(t) \in L^{Q_p(2\A-2)}([0,T] ; L^{2\A-2}(\R)) \}.
\end{equation*}
}

\medskip
We continue with the proof of Lemma \ref{blowupalt}.  Suppose that (\ref{blowup}) does not hold.  Then there are $M_0>0$ and $(T_k)_{k\in \N} \subset (0,T_{\max}(u))$ such that
\begin{equation}
t_k \nearrow T_{\max}(u),\,\,(\text{as}\,\,k \to \infty),\qquad \text{and}\qquad \|v(t_k) \|_{L^p} \le M_0,\quad \forall k\in\N.
\end{equation}
Therefore, applying the above claim, the solution $v$ to (\ref{vint}) can be extended to the time $t=t_k+T(M_0)$ for each fixed $k \in \N$.  
Since $T(M_0)$ is independent of $k$, we have $T_{\max}(u)<t_k +T(M_0)$ for sufficiently large $k$.  Now let $\tilde{v_k}:[0,t_k+T(M_0)] \to \C$ 
be the extended solution given for $k\in \N$.  Then by Lemma \ref{GWPunique}, $u_k(t)\triangleq U(t)v_k(t)$ coincides with $u$ on $[t_k, t_k +T(M_0)]$.  This implies that 
$U(-t)u(t) \in C([0, T_{\max}(u)+\varepsilon] ; L^p)$ for some $\varepsilon>0$.  Absurdity.

\end{proof}

\medskip
\noindent Now we prove our second key proposition.\\
\noindent\textit{Proof of Proposition \ref{GWPKEY}}.\, By the assumption (\ref{persistencyhyp}), $T_{\max}(u)>0$.  In addition, we suppose that $T_{\max}(u)<\infty$.  We start from the corresponding integral equation (\ref{inteq}).  Take
$T\in (0, T_{\max}(u))$ and fix it.  Taking the norm $\|U^{-1}\cdot \|_{L^p}$ of both sides and then
using Lemma \ref{apriorilem}, we get for any $t \in [0, T]$
\begin{eqnarray*}
\|U(-t)u(t) \|_{L^p} &\le & \|\phi \|_{L^p} +\left\|\int^t_0 U(-s) N(u(s))ds \right\|_{L^p}\\
&\le & \|\phi\|_{L^p} +C\|u\|_{L^{Q_{p_0}(2\A-2)}_{[0,t]}(L^{2\A-2})} \|U^{-1}u \|_{L^{\gamma}_{[0,t]}(L^p)}
\end{eqnarray*}
for some $\gamma>0$.  By (\ref{llebesgue})
\begin{equation*}
 \|u \|_{L_{[0,t]}^{Q_p(2\A-2)}}\le  \|u \|_{L_{[0,T_{\max}(u)]}^{Q_p(2\A-2)}}=C<\infty
\end{equation*}
for some $p_0 \in (\frac{\A-1}{2},2]$.
We set
\begin{equation*}
\omega(t) \triangleq \|U(-t)u(t)\|_{L^p}^{\gamma}.
\end{equation*}

Then by the above estimate, we have
\begin{equation*}
\omega (t) \le C_1+C_2 \int^t_0 \omega(s) ds
\end{equation*}
for any $t\in [0,T]$.
By Grownwall's lemma, 
\begin{equation*}
\omega (t) \le C\exp (C_2 t),\quad \forall t\in [0,T]
\end{equation*}
from which we easily deduce that
\begin{equation*}
\sup_{t \in [0,T_{\max}(u))} \|U(-t)u(t)\|_{L^p} <\infty
\end{equation*}
since $T\in (0,T_{\max}(u))$ is arbitrary.
This contradicts Lemma \ref{blowupalt}.  Hence $T_{\max}(u)=\infty$. \qed

\subsection{Proof of Theorem \ref{LGWP}}

\begin{lem}\label{GWPunique2}
Let $\A>2$ and let $p,p_0$ be such that
\begin{equation*}
\max\left( \frac{\A-1}{2} , \frac{2\A-2}{2\A-3} \right) <p\le 2
\end{equation*}
and
\begin{equation*}
\max\left( \frac{\A-1}{2} , \frac{2\A-2}{2\A-3} \right) <p_0< p.
\end{equation*}

Let $u$ be a global solution to (\ref{NLS}) such that
\begin{equation}
u(0)=\phi \in L^p (\R)
\end{equation}
and
\begin{equation}
u \in L_{loc}^{Q_{p_0}(\A+1)}(\R ; L^{\A+1}(\R) ).
\end{equation}
Then
\begin{equation}
u \in L_{loc}^{Q_{p_0}(2\A-2)}(\R ; L^{2\A-2}(\R) ).
\end{equation}

\end{lem}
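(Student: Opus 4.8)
The plan is to run a Duhamel--Strichartz bootstrap directly on the integral equation, exploiting the fact that the $L^{Q_{p_0}(\A+1)}_{loc}(L^{\A+1})$-norm of $u$ is already known to be finite. It suffices to bound $\|u\|_{L^{Q_{p_0}(2\A-2)}_{[0,T]}(L^{2\A-2})}$ for each fixed $T>0$, the interval $[-T,0]$ being handled symmetrically; local integrability then follows. Writing $u(t)=U(t)\phi+i\int_0^t U(t-s)N(u(s))\,ds$ with $\phi=u(0)\in L^p$, I would split the target norm into a linear and a Duhamel contribution and bound each.

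For the linear term I would apply the homogeneous estimate (\ref{StrLp}) with data exponent $p$: since $2\A-2>2$ (as $\A>2$) and $p>\frac{2\A-2}{2\A-3}$, the pair $(Q_p(2\A-2),2\A-2)$ lies in $\mathscr{S}(p)$, so $\|U(t)\phi\|_{L^{Q_p(2\A-2)}([0,T];L^{2\A-2})}\le C\|\phi\|_{L^p}$. Because $p>p_0$ forces $Q_p(2\A-2)>Q_{p_0}(2\A-2)$, a Hölder inequality in time over the finite interval downgrades this to the desired $L^{Q_{p_0}(2\A-2)}_{[0,T]}$-norm.

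For the Duhamel term I would invoke Proposition \ref{inhomo} with output pair $(\beta,\kappa)=(Q_{p_0}(2\A-2),2\A-2)$. Since $N(u)=|u|^{\A-1}u$ obeys $\||u|^{\A-1}u\|_{L^\rho_x}=\|u\|_{L^{\A\rho}_x}^{\A}$, the only way to expose the one spatial norm we control, namely $L^{\A+1}$, is to take $\rho=\frac{\A+1}{\A}\in(1,2)$; the scaling identity in Proposition \ref{inhomo} then pins down $\sigma$ via $\frac{2}{\sigma}=1+\frac{1}{p_0}+\frac{1}{\A+1}$. This gives $\|\int_0^t U(t-s)N(u)\,ds\|_{L^{Q_{p_0}(2\A-2)}_{[0,T]}(L^{2\A-2})}\le C\|u\|_{L^{\A\sigma}_{[0,T]}(L^{\A+1})}^{\A}$, and a short computation shows $\A\sigma\le Q_{p_0}(\A+1)$ precisely because $p_0>\frac{\A-1}{2}$, so a final Hölder in time bounds the right-hand side by $CT^{\theta}\|u\|_{L^{Q_{p_0}(\A+1)}_{[0,T]}(L^{\A+1})}^{\A}$ with $\theta\ge 0$, which is finite by hypothesis. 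Collecting both contributions yields $u\in L^{Q_{p_0}(2\A-2)}_{[0,T]}(L^{2\A-2})$ for every $T$, hence the claim.

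The main obstacle is the exponent bookkeeping: one must check that the forced value $\rho=\frac{\A+1}{\A}$ and the induced $\sigma$ simultaneously meet every admissibility constraint of Proposition \ref{inhomo}. The elementary range conditions $2<\kappa<\infty$ and $1<\rho<2$ hold since $\A>2$; the constraint $\frac{1}{\beta}<\frac{1}{2}-\frac{1}{\kappa}$ unwinds to exactly $p_0>\frac{2\A-2}{2\A-3}$, and the lower bound $\frac{3}{2}-\frac{1}{\rho}<\frac{1}{\sigma}$ reduces to the harmless $p_0<\A+1$. The delicate one is the upper bound $\frac{1}{\sigma}<1$, i.e. $\sigma>1$, which is equivalent to $p_0>\frac{\A+1}{\A}$; this is the binding requirement and the single point at which the argument is most sensitive to the lower bound on $p_0$. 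I would verify this inequality with care, since it is the only place the scheme can break down.
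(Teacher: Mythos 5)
Your proposal is correct in outline and follows essentially the same route as the paper: the linear part is handled by the homogeneous estimate for $(Q_p(2\alpha-2),2\alpha-2)\in\mathscr{S}(p)$ followed by H\"older in time to pass from $Q_p$ to $Q_{p_0}$, and the Duhamel part by Proposition \ref{inhomo} with $\rho=\frac{\alpha+1}{\alpha}$ and the induced $\sigma$ (the paper's $\gamma$), followed by H\"older in time, which requires exactly $p_0>\frac{\alpha-1}{2}$ and produces the factor $T^{1-\frac{\alpha-1}{2p_0}}$.

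The one condition you flag but do not verify, $\frac{1}{\sigma}<1$, i.e. $p_0>\frac{\alpha+1}{\alpha}$, is indeed the sore point: it is \emph{not} implied by the lemma's stated hypothesis $p_0>\max\bigl(\frac{\alpha-1}{2},\frac{2\alpha-2}{2\alpha-3}\bigr)$ for all $\alpha>2$. One checks that $\frac{2\alpha-2}{2\alpha-3}\ge\frac{\alpha+1}{\alpha}$ only for $2<\alpha\le 3$ and $\frac{\alpha-1}{2}\ge\frac{\alpha+1}{\alpha}$ only for $\alpha\ge\frac{3+\sqrt{17}}{2}\approx 3.56$, so for $3<\alpha<\frac{3+\sqrt{17}}{2}$ (e.g. $\alpha=3.2$, $p_0=1.3$) the hypotheses admit $p_0<\frac{\alpha+1}{\alpha}$ and the scheme breaks exactly where you predicted. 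The paper's own proof does not close this either; it simply imposes $p>(\alpha+1)/\alpha$ ``in addition'' to the stated range, and in the only place the lemma is invoked (Corollary \ref{GECOR}) the exponent $p_0$ is taken above $\frac{\alpha+1}{\alpha}$ anyway. So your argument matches the paper's, and your instinct about where to be careful is exactly right; to make the lemma airtight as stated one should add $p_0>\frac{\alpha+1}{\alpha}$ to its hypotheses.
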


\begin{proof}
Let $T>0$.  
We estimate the corresponding integral equation (\ref{inteq}) by means of the generalized Strichartz estimates.  Observe that $(Q_p(2\A-2), 2\A-2) \in \mathscr{S}(p)$ if
\begin{equation}
\A>2,\quad p>\frac{2\A-2}{2\A-3}. \label{luniquecond}
\end{equation}
Let $\gamma$ be determined by the relation
\begin{equation*}
2+\frac{1}{p_0}=\frac{2}{\gamma}+\frac{\A}{\A+1}.
\end{equation*}
Then the quadruple $(\beta,\kappa,\sigma,\rho)=(Q_p(2\A-2),2\A-2,\gamma, (\A+1)/\A)$ fulfills the condition of Proposition \ref{inhomo} if $p>(\A+1)/\A$ in addition to (\ref{luniquecond}).  Therefore,
\begin{eqnarray*}
\| u\|_{L_{[0,T]}^{Q_{p_0}(2\A-2)}(L^{2\A-2})} &\le & \| U(t)\phi \|_{L_{[0,T]}^{Q_{p_0}(2\A-2)}(L^{2\A-2})} +\left\| \int^t_0 U(t-s)N(u(s))ds \right\|_{ L_{[0,T]}^{Q_{p_0}(2\A-2)}(L^{2\A-2})  }\\
 &\le & T^{\frac{1}{Q_{p_0}(2\A-2)}-\frac{1}{Q_p(2\A-2)}}\| U(t)\phi \|_{L_{[0,T]}^{Q_{p}(2\A-2)}(L^{2\A-2})} \\
&& \qquad +\left\| \int^t_0 U(t-s)N(u(s))ds \right\|_{ L_{[0,T]}^{Q_{p_0}(2\A-2)}(L^{2\A-2})  }\\
&\le & CT^{\frac{1}{Q_{p_0}(2\A-2)}-\frac{1}{Q_p(2\A-2)}}\|\phi \|_{L^p} +C\|N(u)\|_{L_{[0,T]}^{\gamma}(L^{\frac{\A+1}{\A}}) } \\
&\le & CT^{\frac{1}{2}\left(\frac{1}{p_0}-\frac{1}{p}\right)} \|\phi \|_{L^p} +CT^{1-\frac{\A-1}{2p_0}} \|u\|_{L^{Q_{p_0}(\A+1)} (L^{\A+1})}^{\A},
\end{eqnarray*}
where $\gamma$ is such that
\begin{equation*}
2+\frac{1}{p_0}=\frac{2}{\gamma}+\frac{\A}{\A+1}.
\end{equation*}
\end{proof}

\if0
***************************************************************************

Let $\phi \in L^p$ with (\ref{grange}).  Then by Theorem \ref{GWPE}, there is a unique global solution to (\ref{NLS}) with $u(0)=\phi$ such that
\begin{equation*}
u \in L_{loc}^{Q_{p_0}(\A+1)}(\R ; L^{\A+1}(\R) ).
\end{equation*}
By Lemma \ref{GWPunique2}
\begin{equation*}
u \in L_{loc}^{Q_{p_0}(2\A-2)}(\R ; L^{2\A-2}(\R) ).
\end{equation*}
Now by Lemma \ref{GWPunique} the global solution $u$ coincides with the local solution given by Theorem \ref{LWP}.  Thus
\begin{equation*}
U(-t)u(t)|_{[0,\varepsilon)\times \R} \in C([0,\varepsilon) ; L^p(\R))
\end{equation*}
for some $\varepsilon >0$.  Therefore, the hypotheses of Proposition \ref{GWPKEY} are fullfilled and consequently
\begin{equation*}
u|_{[0,\infty) \times \R} \in C_{\CS}([0,\infty) ; L^p(\R)).
\end{equation*}
By a similar argument, we also have
\begin{equation*}
u|_{(-\infty,0]\times \R} \in C_{\CS}((-\infty, 0] ; L^p(\R))
\end{equation*}
which completes the proof of Theorem \ref{GWP}.

*******************************************
\fi
As consequence of Theorem \ref{LWP}, Lemma \ref{GWPunique}, Lemma \ref{GWPunique2}, and Proposition \ref{gexiscor2}, we get the following
global existence theorem with (\ref{ipersistency}) at least for a finite time.
\begin{cor}\label{GECOR}
Let $\A\ge 3$ and
\begin{equation*}
\max \left( \frac{(\A-1)(\A+3)}{2\A^2+2\A-4}, \frac{(\A-1)(3\A+5)}{2(\A^2-2\A+5)} \right)<p \le 2
\end{equation*}
Then for any $\phi \in L^p$ there exists a unique global solution $u$ to (\ref{NLS}) such that
\begin{equation*}
u \in L^{Q_{p_0}(2\A-2)}_{loc}(\R ; L^{2\A-2}(\R))
\end{equation*}
for some $p_0$ satisfying
\begin{equation*}
\max \left( \frac{2\A}{5}, \frac{\A+1}{\A} \right) <p_0 <p
\end{equation*}
and such that
\begin{equation*}
u|_{[0,T] \times \R} \in C_{\CS} ([-T,T] ; L^p (\R) )
\end{equation*}
for some $T\sim \|\phi \|_{L^p}^{-\frac{2p(\A-1)}{2p-\A+1}}$.

\end{cor}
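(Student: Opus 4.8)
The plan is to assemble the global solution by combining four ingredients established earlier: the existence result of Proposition~\ref{gexiscor2}, the integrability upgrade of Lemma~\ref{GWPunique2}, the local theory of Theorem~\ref{LWP}, and the uniqueness assertion of Lemma~\ref{GWPunique}. First I would observe that the hypothesis on $p$ in the corollary is precisely the range appearing in Proposition~\ref{gexiscor2}. Hence, for any $\phi\in L^p$, that proposition directly furnishes a unique global solution $u$ with
\begin{equation*}
u\in L^{Q_{p_0}(\A+1)}_{loc}(\R;L^{\A+1}(\R))
\end{equation*}
for some $p_0$ obeying $\max(2\A/5,(\A+1)/\A)<p_0<p$. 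This settles existence in the weaker Lebesgue class.

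Next I would upgrade the spatial integrability exponent from $\A+1$ to $2\A-2$ by feeding $u$ into Lemma~\ref{GWPunique2}, whose conclusion is exactly
\begin{equation*}
u\in L^{Q_{p_0}(2\A-2)}_{loc}(\R;L^{2\A-2}(\R)).
\end{equation*}
To invoke that lemma I must check that both $p$ and the chosen $p_0$ lie above $\max(\frac{\A-1}{2},\frac{2\A-2}{2\A-3})$; this is an elementary comparison of the rational thresholds against the corollary's lower bound on $p$ together with the constraint $p_0>\max(2\A/5,(\A+1)/\A)$. Having the solution in the stronger $L^{2\A-2}$ class, its uniqueness in that class follows from Lemma~\ref{GWPunique}.

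With $u$ in hand, I would turn to the twisted persistence property. The corollary's range of $p$ implies the range required in Theorem~\ref{LWP} (here $\A\ge 3$, so $r=2(\A-1)=2\A-2$), so that theorem produces a local solution $\tilde u$ on $[0,T]$ with $T\sim\|\phi\|_{L^p}^{-\frac{2p(\A-1)}{2p-\A+1}}$ satisfying $\tilde u\in C_{\CS}([0,T];L^p)\cap L^q([0,T];L^{2\A-2})$. Both $u$ and $\tilde u$ solve (\ref{NLS}) with the same data $\phi$ and both belong to $L^{Q_{p_0}(2\A-2)}_{loc}$ near $t=0$, so Lemma~\ref{GWPunique} forces $u=\tilde u$ on their common interval. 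Consequently $u$ inherits the twisted continuity, giving $u|_{[0,T]\times\R}\in C_{\CS}([0,T];L^p)$. Running the identical argument backward in time yields the same statement on $[-T,0]$, hence $u\in C_{\CS}([-T,T];L^p)$, which is the claimed conclusion.

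The main obstacle I anticipate is not any single analytic estimate --- all of those are packaged in the cited results --- but the bookkeeping of exponent ranges: one must verify that a single $p_0$ can be chosen to satisfy simultaneously the constraints of Proposition~\ref{gexiscor2}, Lemma~\ref{GWPunique2}, and Lemma~\ref{GWPunique}, while $p$ itself meets the hypothesis of Theorem~\ref{LWP}. The two complicated lower thresholds on $p$ in the statement are exactly the conditions that keep this system of inequalities consistent for every $\A\in[3,5)$, so the delicate point is checking these rational comparisons and confirming the admissible interval for $p_0$ is nonempty, rather than proving anything genuinely new.
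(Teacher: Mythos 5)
Your proposal is correct and follows essentially the same route as the paper, which derives the corollary from exactly the same four ingredients in the same order: Proposition~\ref{gexiscor2} for global existence in $L^{Q_{p_0}(\A+1)}_{loc}(L^{\A+1})$, Lemma~\ref{GWPunique2} to upgrade to $L^{Q_{p_0}(2\A-2)}_{loc}(L^{2\A-2})$, and Theorem~\ref{LWP} together with the uniqueness of Lemma~\ref{GWPunique} to identify the global solution with the twisted-continuous local one on $[-T,T]$. The only point worth making explicit is that the local solution of Theorem~\ref{LWP} lies in $L^{Q_{p}(2\A-2)}([0,T];L^{2\A-2})$, which embeds into the $Q_{p_0}$ class on a bounded interval since $p_0<p$, so the two solutions indeed sit in a common uniqueness class.
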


\medskip
\noindent\textit{Proof of Theorem \ref{LGWP}}.\quad We get the assertion of
our main result on global well-posedness for large data by combining Corollary \ref{GECOR} with Proposition \ref{GWPKEY}.

 \section{Proof of other results  }

 \subsection{Proof of Theorem \ref{hatglobal}}
 
 We begin by recalling the local well-posedness and global existence result given by \cite{107T}:
 \begin{prop}\label{Hatexistence}
 Assume that $1<\A<5$ and
 \begin{equation*}
 2\le p<   \min \left( \frac{2(\A+1)}{\A-1}, \A+1 \right).
 \end{equation*}
Let $M>0$. Then for any $\phi \in \Lp$ with $\|\phi\|_{\Lp}\le M$ there are 
$T=T(M)>0$ and a unique local solution $u$ to (\ref{NLS}) such that
\begin{equation*}
u \in C([-T,T] ; \Lp (\R)) \cap L^q([-T,T] ; L^{\A+1}(\R) )
\end{equation*}
 where $q$ is determined by
 \begin{equation*}
 \frac{2}{q}+\frac{1}{\A+1}=\frac{1}{p}.
 \end{equation*}
 Moreover, if the nonlinearity is give by $N(u)=|u|^{\A-1}u$ and
 \begin{equation*}
 2\le p <\min \left( \A+1, \frac{3\A+5}{2\A} \right),
 \end{equation*}
 the local solution extends to a global one such that
 \begin{equation*}
 u \in C(\R; L^2(\R))\cap L^{\frac{4(\A+1)  }{ \A-1 }}_{loc} (L^{\A+1}(\R))
 +L^Q_{loc}([0,\infty) ; L^{\A+1} (\R))
 \end{equation*}
 where $Q>q$ is a sufficiently large positive number determined depending only on $p,\A$.
 Furthermore, uniqueness holds in the space $L^{\frac{4(\A+1)  }{ \A-1 }}_{loc} (\R;L^{\A+1} (\R))$.
 \end{prop}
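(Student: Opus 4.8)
The plan is to solve the integral equation
\begin{equation*}
u(t)=U(t)\phi+i\int_0^t U(t-s)N(u(s))\,ds
\end{equation*}
by the contraction mapping principle, for $T=T(M)$ small, in the space
\begin{equation*}
Y_T\triangleq C([-T,T];\Lp(\R))\cap L^q([-T,T];L^{\A+1}(\R)).
\end{equation*}
Since $\Lp$ enjoys the unitarity property $\|U(t)\phi\|_{\Lp}=\|\phi\|_{\Lp}$, the linear flow lies in $C(\R;\Lp)$ with the genuine persistence property (so $C_{\CS}(I;\Lp)=C(I;\Lp)$), and the $\Lp$-component of the homogeneous term costs nothing. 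For the $L^q(L^{\A+1})$-component I would invoke the homogeneous Fourier--Strichartz estimate $\|U(t)\phi\|_{L^q(L^{\A+1})}\le C\|\phi\|_{\Lp}$, the $p\ge2$ analogue of Proposition \ref{GFSprop} available from \cite{107T}; the scaling $\frac2q+\frac1{\A+1}=\frac1p$ together with $2\le p<\min(\frac{2(\A+1)}{\A-1},\A+1)$ is exactly what places the relevant pair in the admissible set.

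For the Duhamel term two estimates are needed. To control the $L^q(L^{\A+1})$-norm I would use the inhomogeneous estimate of Proposition \ref{inhomo} with $(\kappa,\rho)=(\A+1,\frac{\A+1}{\A})$, which is licit since $\A+1>2$ and $\frac{\A+1}{\A}<2$; combined with the pointwise bound $|N(u)|\le C|u|^{\A}$ coming from (\ref{power2}) with $v=0$, Hölder in time gives
\begin{equation*}
\Big\|\int_0^t U(t-s)N(u)\,ds\Big\|_{L^q(L^{\A+1})}\le CT^{\theta}\|u\|_{L^q(L^{\A+1})}^{\A},\qquad \theta=1-\tfrac{\A-1}{2p}>0.
\end{equation*}
To control the $\Lp$-norm I would move $U(-t)$ inside via unitarity and apply Corollary \ref{DFS} (valid for $2\le p<4$) with $\tilde\rho'=\A+1$, so that $(\tilde\sigma',\A+1)\in\widehat{\mathscr{S}}(p')$ in the stated range; this yields $\|\int_0^t U(-s)N(u)\,ds\|_{\Lp}\le C\|N(u)\|_{L^{\tilde\sigma}(L^{(\A+1)/\A})}\le CT^{\theta}\|u\|_{L^q(L^{\A+1})}^{\A}$. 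Collecting the two bounds shows $\mathscr{T}$ maps a ball of $Y_T$ into itself and is a contraction for $T=T(M)$ small; the parallel difference estimate furnishes uniqueness and Lipschitz dependence, and continuity in $\Lp$ follows from the $\Lp$-Duhamel bound together with unitarity.

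For the global statement with $N(u)=|u|^{\A-1}u$ I would run the Vargas--Vega / Bourgain splitting argument \cite{VV},\cite{Bourgain} on the Fourier side. Writing $\widehat\phi\in L^{p'}$ with $p'\le2$, I split by amplitude,
\begin{equation*}
\widehat\phi=\widehat\phi\,\mathbf{1}_{|\widehat\phi|\le N}+\widehat\phi\,\mathbf{1}_{|\widehat\phi|>N}=:\widehat{\varphi_N}+\widehat{\psi_N};
\end{equation*}
since $p'<2$ the truncation gives $\varphi_N\in L^2$ with $\|\varphi_N\|_{L^2}\lesssim N^{\gamma}$ for some $\gamma>0$, while $\|\psi_N\|_{\Lp}\to0$ and hence $\|U(t)\psi_N\|_{L^{q}(L^{\A+1})}\to0$. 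Solving $iv_t+v_{xx}+|v|^{\A-1}v=0$ with $v(0)=\varphi_N$ produces, by the global $L^2$ theory (Proposition A(iii), $1<\A<5$), a global $v\in C(\R;L^2)\cap L^{4(\A+1)/(\A-1)}_{loc}(L^{\A+1})$. The remainder $w=u-v$ then solves the perturbed equation with data $\psi_N$ and nonlinearity $|v+w|^{\A-1}(v+w)-|v|^{\A-1}v$, and is constructed in $L^Q_{loc}(L^{\A+1})$ by a contraction driven by the smallness of $\psi_N$; the resulting life span $T_N$ tends to $\infty$ as $N\to\infty$ provided $p<\frac{3\A+5}{2\A}$, which is precisely the gain needed to reach any prescribed time. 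The sum $u=v+w$ then has the asserted form, and uniqueness in $L^{4(\A+1)/(\A-1)}_{loc}(\R;L^{\A+1})$ follows from the difference estimate in that auxiliary norm.

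The main obstacle is the global part. The delicate point is the quantitative balance in the splitting: $\|\varphi_N\|_{L^2}\sim N^{\gamma}$ grows while $\|\psi_N\|_{\Lp}$ decays, and one must estimate the mixed difference nonlinearity $|v+w|^{\A-1}(v+w)-|v|^{\A-1}v$ in Strichartz norms that pair an $L^2$-admissible pair for $v$ against a $\Lp$-admissible pair for $w$ with genuinely different scalings, then track all powers of $T$ and $N$ to verify $T_N\to\infty$ under $2\le p<\min(\A+1,\frac{3\A+5}{2\A})$. A secondary technical point is supplying the homogeneous Fourier--Strichartz estimate for $p\ge2$ used in the local step, which falls outside the range $4/3<p\le2$ of Proposition \ref{GFSprop} but is obtained by the same arguments in \cite{107T}.
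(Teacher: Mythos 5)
The paper itself does not prove this proposition: it is imported from \cite{107T} (``we begin by recalling the local well-posedness and global existence result given by \cite{107T}''), so there is no in-paper argument to compare yours against step by step. That said, your reconstruction of the local statement is correct and uses exactly the toolkit this paper develops elsewhere: unitarity of $U(t)$ on $\Lp$ for the linear part, Lemma \ref{FSPA} (the $p\ge 2$ counterpart of Proposition \ref{GFSprop}) for the $L^q(L^{\A+1})$ component, Proposition \ref{inhomo} for the Duhamel term in $L^q(L^{\A+1})$, and Corollary \ref{DFS} with $\tilde\rho'=\A+1$ for the $\Lp$ component. Your reading of the hypotheses is exact: membership of $(\tilde\sigma',\A+1)$ in $\widehat{\mathscr{S}}(p')$ forces precisely $\frac{1}{\tilde\sigma'}<\frac14\iff p<\frac{2(\A+1)}{\A-1}$ and $\frac{1}{\tilde\sigma'}<\frac12-\frac{1}{\A+1}\iff p<\A+1$, and the H\"older exponent $1-\frac{\A-1}{2p}$ is positive throughout the stated range, so $T=T(M)$ comes out as claimed.

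The one genuine gap is in the global part, and it is the point you flag yourself. The amplitude splitting gives $\|\varphi_N\|_{L^2}\lesssim N^{1-p'/2}$ with an explicit power, but for the rough piece you only record the qualitative fact $\|\psi_N\|_{\Lp}\to 0$; dominated convergence carries no rate, and without a rate the bookkeeping that yields $T_N\to\infty$ cannot be run (compare the class $\mathcal{A}(p,\A+1,\gamma)$ and (\ref{kdefinition}) in Appendix~A, where one needs $\|U(t)\psi_N\|\le C_0N^{-1}$ \emph{against} $\|\varphi_N\|_{L^2}\le C_0N^{\gamma}$, and the admissible range of $p$ is read off from the resulting exponent). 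The threshold $p<\frac{3\A+5}{2\A}$ never materializes from your argument as written. The repair is to measure $\psi_\lambda=\mathcal{F}^{-1}[\widehat\phi\,\mathbf{1}_{|\widehat\phi|>\lambda}]$ in a weaker hat space: for $\tilde p>p$ one has $\|\widehat\phi\,\mathbf{1}_{|\widehat\phi|>\lambda}\|_{L^{\tilde p'}}^{\tilde p'}\le \lambda^{\tilde p'-p'}\|\widehat\phi\|_{L^{p'}}^{p'}$, so $\|\psi_\lambda\|_{\widehat{L}^{\tilde p}}$ decays like a negative power of $\lambda$, and the Fourier--Strichartz estimate at level $\tilde p$ (Lemma \ref{FSPA} again) converts this into quantitative smallness of $\|U(t)\psi_\lambda\|_{L^{Q}(L^{\A+1})}$ with $Q=Q_{\tilde p}(\A+1)>q$ --- which is exactly why the statement places the perturbative piece in $L^Q_{loc}$ rather than $L^q_{loc}$. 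Balancing that power against $\lambda^{1-p'/2}$ is where $\frac{3\A+5}{2\A}$ comes from; this computation, carried out in \cite{107T}, is the part of your proposal that remains to be done.
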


 This result assures the global existence for $\phi \in \Lp$.  It remains to
 show the global-in-time persistence property
 \begin{equation}
 u\in C([0,\infty) ; \Lp) \label{Hatpersistencyg}
 \end{equation}
 for the solution (as earlier, it is sufficient to consider only positive time).  In view of Proposition \ref{Hatexistence}, the
 global-in-time persistence property (\ref{Hatpersistencyg}) is a direct consequence of
 the following proposition:

 \begin{prop}\label{hatapriori}
 Let $2<\A <5$ and $2\le p <4$.  Let $u:[0,\infty) \times \R \to \C$ be a global solution such that
 \begin{equation*}
u\in L_{loc}^{\frac{4(\A+1)}{\A-1}} ([0,\infty) ; L^{\A+1}(\R) )
 \end{equation*}
 and
 \begin{equation*}
 u|_{[0,\varepsilon)\times \R} \in C([0,\varepsilon) ; \Lp(\R))
 \end{equation*}
 for some $\varepsilon>0$.
 Then
 \begin{equation*}
u \in C([0,\infty) ;\Lp(\R)).
 \end{equation*}
 
 \end{prop}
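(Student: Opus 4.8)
The plan is to mirror the proof of Proposition \ref{GWPKEY}, the essential simplification being that, since $\Lp$ enjoys the unitarity $\|U(t)\psi\|_{\Lp}=\|\psi\|_{\Lp}$, the relevant persistence is the ordinary one $C([0,\infty);\Lp)$ and no twisting is needed. I would set
\[
T_{\max}\triangleq\sup\{T>0\,|\,u|_{[0,T]\times\R}\in C([0,T];\Lp(\R))\},
\]
which is positive by hypothesis, and argue by contradiction assuming $T_{\max}<\infty$. The argument then rests on two ingredients: an a priori bound giving $\sup_{t\in[0,T_{\max})}\|u(t)\|_{\Lp}<\infty$, and a blow-up alternative asserting that this supremum must be infinite.

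For the a priori bound I would start from the integral equation (\ref{inteq}) and use unitarity to write, for $t\in[0,T_{\max})$,
\[
\|u(t)\|_{\Lp}=\|U(-t)u(t)\|_{\Lp}\le\|\phi\|_{\Lp}+\left\|\int_0^t U(-s)N(u(s))\,ds\right\|_{\Lp}.
\]
Since $2\le p<4$, Corollary \ref{DFS} applies directly, which is exactly where the $\Lp$-theory is cleaner than the $L^p$-theory (there the weighted estimate (\ref{WDFS}) had to be invoked): choosing $(\tilde\sigma,\tilde\rho)$ with $(\tilde\sigma',\tilde\rho')\in\widehat{\mathscr{S}}(p')$ gives
\[
\left\|\int_0^t U(-s)N(u(s))\,ds\right\|_{\Lp}\le C\|N(u)\|_{L^{\tilde\sigma}_{[0,t]}(L^{\tilde\rho})}.
\]
I would split the nonlinearity by H\"older in space as $\|N(u(s))\|_{L^{\tilde\rho}}\le\|u(s)\|_{L^{\A+1}}^{\A-1}\|u(s)\|_{L^p}$ and bound the last factor by $\|u(s)\|_{L^p}\le C\|u(s)\|_{\Lp}$ via the Hausdorff--Young inequality (valid for $p\ge2$). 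A further H\"older in time feeds the $(\A-1)$ copies of $\|u\|_{L^{\A+1}}$ into the a priori norm $L^{4(\A+1)/(\A-1)}_{[0,t]}(L^{\A+1})$ and leaves the factor $\|u\|_{L^{\gamma}_{[0,t]}(\Lp)}$ with $\gamma=\tfrac{4}{5-\A}\ge1$. As $u\in L^{4(\A+1)/(\A-1)}_{loc}(L^{\A+1})$ and $T_{\max}<\infty$, the Strichartz factor is a finite constant on $[0,T_{\max}]$, so with $\omega(t)\triangleq\|u(t)\|_{\Lp}^{\gamma}$ one gets $\omega(t)\le C_1+C_2\int_0^t\omega(s)\,ds$, and Gronwall's lemma yields $\sup_{t\in[0,T_{\max})}\|u(t)\|_{\Lp}<\infty$.

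For the blow-up alternative I would establish the $\Lp$-analogue of Lemma \ref{blowupalt}: if $T_{\max}<\infty$ then $\|u(t)\|_{\Lp}\to\infty$ as $t\nearrow T_{\max}$. Here the local theory of Proposition \ref{Hatexistence} is decisive, since its life span $T=T(M)$ depends only on $M=\|\phi\|_{\Lp}$; thus if $\|u(t_k)\|_{\Lp}$ stayed bounded along some $t_k\nearrow T_{\max}$ one could restart the local solution at $t_k$ with a uniform existence time, extend strictly beyond $T_{\max}$, and glue the extension to $u$ by the uniqueness in $L^{4(\A+1)/(\A-1)}_{loc}(L^{\A+1})$ (again from Proposition \ref{Hatexistence}), contradicting the definition of $T_{\max}$. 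The a priori bound contradicts this dichotomy, forcing $T_{\max}=\infty$ and hence $u\in C([0,\infty);\Lp(\R))$.

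The main obstacle is the exponent bookkeeping in the a priori estimate: one must select $(\tilde\sigma,\tilde\rho)$ so that $(\tilde\sigma',\tilde\rho')$ lies in $\widehat{\mathscr{S}}(p')$ \emph{and} the H\"older decomposition deposits the high-integrability factors exactly into the available class $L^{4(\A+1)/(\A-1)}(L^{\A+1})$, while leaving an $\Lp$-factor with a finite $\gamma\ge1$ so that Gronwall closes. Checking that the admissibility constraints $0<1/\tilde\rho'<1/2$ and $0<1/\tilde\sigma'<\min(1/2-1/\tilde\rho',1/4)$ are compatible with the stated range of $(\A,p)$ is the delicate point; the remainder is a routine transcription of the argument already carried out for Proposition \ref{GWPKEY}.
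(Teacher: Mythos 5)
Your overall architecture (a priori bound via the dual Strichartz estimate plus Gronwall, then a blow-up alternative run off the local theory of Proposition \ref{Hatexistence} and the uniqueness class) is exactly the paper's two-step scheme. But the point you flag as ``delicate'' --- the exponent bookkeeping in the a priori estimate --- is in fact where your argument breaks, and the paper needs an extra ingredient there that your proposal omits. If you deposit the $\A-1$ high-integrability factors into $L^{\A+1}$ in space, H\"older forces $\frac{1}{\tilde\rho}=\frac{\A-1}{\A+1}+\frac{1}{p}$, and then $\frac{1}{\tilde\rho'}=\frac{2}{\A+1}-\frac1p$, which is $\le 0$ whenever $p\le\frac{\A+1}{2}$; for $3<\A<5$ this already kills the case $p=2$ (indeed $\tilde\rho<1$ there, so $L^{\tilde\rho}$ is not even a Banach space). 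The time exponent fares no better: matching the scaling forces $\frac{1}{\tilde\sigma'}=\frac{\A-1}{2(\A+1)}$, which violates the requirement $\frac{1}{\tilde\sigma'}<\frac14$ of $\widehat{\mathscr{S}}(p')$ as soon as $\A\ge3$. So for a substantial part of the stated range $2<\A<5$, $2\le p<4$, the pair $(\tilde\sigma',\tilde\rho')$ you need simply does not lie in $\widehat{\mathscr{S}}(p')$ and Corollary \ref{DFS} cannot be invoked.

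The paper avoids this by choosing the space exponent independently of $\A$: it splits $\||u|^{\A-1}u\|_{L^{\rho}}\le\||u|^{\A-1}\|_{L^2}\|u\|_{L^p}$ with $\rho=\frac{2p}{p+2}$, $\sigma=\frac43$, so that $(\sigma',\rho')=(4,\rho')$ with $\rho'>4$ is admissible for all $2<p<4$. The price is that the first factor is $\|u(s)\|_{L^{2(\A-1)}}^{\A-1}$, not $\|u(s)\|_{L^{\A+1}}^{\A-1}$, and $2(\A-1)>\A+1$ for $\A>3$, so the hypothesis $u\in L^{4(\A+1)/(\A-1)}_{loc}(L^{\A+1})$ cannot be fed in directly. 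This is why the paper inserts a separate lemma showing that $u\in L^{4(\A+1)/(\A-1)}_{loc}(L^{\A+1})$ together with $u(0)\in\Lp$ implies $u\in L^{4(\A-1)/(\A-2)}_{loc}(L^{2(\A-1)})$, proved by running the admissible Strichartz estimates (homogeneous via Lemma \ref{FSPA}, inhomogeneous with the nonlinearity in $L^{4(\A+1)/(3\A+5)}(L^{(\A+1)/\A})$) once more on the integral equation. That integrability-upgrade lemma is the missing idea in your proposal; once it is added, your Gronwall step with $\gamma=\frac{4}{5-\A}$ and your blow-up alternative go through as you describe.
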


\noindent\textit{Proof}.
We proceed in two steps:\\
\textbf{Step 1}.\quad Assume moreover that 
\begin{equation*}
u|_{[0,T_0) \times \R} \in C([0,T_0) ; \Lp (\R))
\end{equation*}
for some $T_0>0$.
Then 
\begin{equation}
\sup_{t\in [0,T_0)} \|u(t) \|_{\Lp}<\infty. \label{hatstep1}
\end{equation}

\textit{Proof of Step 1}.\quad Let $t\in [0,T_0)$.  We apply Corollary \ref{DFS}
to obtain
\begin{equation*}
\left\| \int^t_0 U(t-\tau) (|u|^{\A-1} u) d\tau \right\|_{\Lp}
\le \| |u|^{\A-1}u \|_{L_{[0,t]}^{\sigma}(L^{\rho})},
\end{equation*}
 where
\begin{equation*}
\sigma=\frac{4}{3},\quad \rho=\frac{2p}{p+2}.
\end{equation*}
For any $s\in [0,t]$, we have
\begin{equation*}
\left\||u(s)|^{\A-1} u(s) \right\|_{L^{\frac{2p}{p+2}}} \le \left\| |u(s)|^{\A-1}\right\|_{L^2} \|u\|_{L^p}
\le \|u(s)\|_{L^{2(\A-1)}}^{\A-1} \|u(s)\|_{\Lp}.
\end{equation*}
Thus
\begin{equation}
\left\| \int^t_0 U(t-\tau) (|u|^{\A-1} u) d\tau \right\|_{\Lp}
\le C\|u\|_{L^{\frac{4(\A-1)}{\A-2}  }_{[0,t]}(L^{2(\A-1)})} \|u \|_{L_{[0,t]}^{\frac{4 }{5-\A }} (\Lp)}
\label{hatapriori2}
\end{equation}
and
note that $(\frac{4(\A-1)}{\A-2}, 2(\A-1))$ is admissible if $\A\ge 2$.  To complete the proof we need
two elementary lemmata:
\begin{lem} \label{FSPA}
For any $2\le q,r,p \le \infty$ such that
\begin{equation*}
\frac{2}{q}+\frac{1}{r}=\frac{1}{p}
\end{equation*}
the estimate
\begin{equation*}
\|U(t)\phi \|_{L^q(L^r)} \le C\|\phi \|_{\Lp}
\end{equation*}
holds true.

\end{lem}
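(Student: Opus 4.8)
The plan is to obtain the estimate by complex interpolation between two extreme cases: the classical one-dimensional Strichartz estimate at $p=2$ and a trivial Fourier-inversion bound at $p=\infty$. Throughout I write $g\triangleq\mathcal{F}\phi$, so that $\|\phi\|_{\Lp}=\|g\|_{L^{p'}}$ and $U(t)\phi$ is the Fourier multiplier $e^{-it\xi^2}g(\xi)$; by density it suffices to treat $\phi\in\mathcal{S}(\R)$ and the common dense class.

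First I would record the two endpoints. At $p=2$ the scaling relation $2/q+1/r=1/2$ with $q,r\ge 2$ is exactly the admissibility condition, and Plancherel gives $\|\phi\|_{\widehat{L}^2}=c\|\phi\|_{L^2}$; hence the standard estimate (\ref{L2Str}) yields
\[
\|U(t)\phi\|_{L^{q_0}(L^{r_0})}\le C\|\phi\|_{\widehat{L}^2}
\]
for every such base pair $(q_0,r_0)$. At the opposite end, $p=\infty$ forces $q=r=\infty$, and since $\|\phi\|_{\widehat{L}^\infty}=\|g\|_{L^1}$, Fourier inversion gives the pointwise bound $|U(t)\phi(x)|\le c\|g\|_{L^1}$, that is
\[
\|U(t)\phi\|_{L^\infty(L^\infty)}\le C\|\phi\|_{\widehat{L}^\infty}.
\]

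Next I would interpolate the \emph{single} linear operator $\phi\mapsto U(\cdot)\phi$ between these two mappings. On the data side $\mathcal{F}$ is an isometry of $\widehat{L}^a$ onto $L^{a'}$, so $[\widehat{L}^2,\widehat{L}^\infty]_\theta=\widehat{L}^{p}$ with $1/p=(1-\theta)/2$; on the space-time side the standard interpolation of mixed-norm spaces gives $[L^{q_0}(L^{r_0}),L^\infty(L^\infty)]_\theta=L^{q}(L^{r})$ with $1/q=(1-\theta)/q_0$ and $1/r=(1-\theta)/r_0$. The scaling is preserved automatically, since $2/q+1/r=(1-\theta)(2/q_0+1/r_0)=(1-\theta)/2=1/p$, so the complex interpolation theorem produces $\|U(t)\phi\|_{L^q(L^r)}\le C\|\phi\|_{\Lp}$ for every triple reached this way. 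Conversely, given $(q,r,p)$ as in the statement, I set $\theta\triangleq 1-2/p\in[0,1)$, $q_0\triangleq 2q/p$ and $r_0\triangleq 2r/p$; a direct computation gives $2/q_0+1/r_0=1/2$, while $1/r\le 1/p$ forces $r_0\ge 2$ and $2/q\le 1/p$ forces $q_0\ge 4$, so $(q_0,r_0)$ is a legitimate admissible base pair and the base estimate applies. The limiting case $p=\infty$ is the second endpoint itself.

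The routine parts here are the two endpoint bounds and the formal interpolation. The only point needing genuine care is the last bookkeeping: verifying that the base pair $(q_0,r_0)$ attached to a given $(q,r,p)$ stays in the admissible one-dimensional range $q_0\in[4,\infty]$, $r_0\in[2,\infty]$, which is precisely where the hypotheses $q,r,p\ge 2$ together with the scaling relation are used. Thus the main (mild) obstacle is not any oscillatory-integral analysis but ensuring the mixed-norm spaces $L^q(L^r)$ and the hat spaces $\Lp$ are interpolated with the correct identification of exponents; once that identification is in place the estimate follows at once.
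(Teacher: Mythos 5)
Your argument is correct and is exactly the paper's route: the paper proves Lemma \ref{FSPA} in one line by ``interpolation from the standard Strichartz estimates and the trivial case $p=q=r=\infty$'' (citing Kato), which is precisely your complex interpolation between the admissible $L^2$ Strichartz endpoint and the Fourier-inversion bound $\|U(t)\phi\|_{L^\infty(L^\infty)}\le C\|\widehat{\phi}\|_{L^1}$. Your exponent bookkeeping (checking $q_0\ge 4$, $r_0\ge 2$ for the base pair) is sound and simply supplies the details the paper leaves to the reference.
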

\begin{proof}
The estimate follows from interpolation from the standard Strichartz estimates and the trivial 
case $p=q=r=\infty$.  See \cite{Kato} for details.

\end{proof}

\begin{lem} 
Let $\A>2$ and let $u$ be a global solution to (\ref{NLS}) such that
\begin{equation*}
u \in L_{loc}^{ \frac{4(\A+1)}{(\A-1)}          }([0, \infty) ; L^{ \A+1} ).
\end{equation*}
and $u(0) =\phi \in \Lp$.
Then
\begin{equation*}
u \in L_{loc}^{\frac{4(\A-1)}{(\A-2) }  }([0, \infty) ; L^{ 2(\A-1)} ).
\end{equation*}
\end{lem}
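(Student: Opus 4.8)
The plan is to estimate the target norm $\|u\|_{L^{4(\A-1)/(\A-2)}_{[0,T]}(L^{2(\A-1)})}$ directly from the Duhamel formula (\ref{inteq}), splitting $u=U(t)\phi+i\int_0^tU(t-s)N(u(s))\,ds$ into the free evolution and the nonlinear term and extracting a non-negative power of $T$ from each by H\"older's inequality in time, so that both pieces are finite on every $[0,T]$. Throughout I set $(\beta,\kappa)=\left(\frac{4(\A-1)}{\A-2},2(\A-1)\right)$; a direct computation gives $\frac2\beta+\frac1\kappa=\frac12$, so this pair is $L^2$-admissible, and for $\A>2$ one has $2<\kappa<\infty$ together with $0<\frac1\beta<\frac12-\frac1\kappa$.

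For the free part, note that $(\beta,\kappa)$ lies on the $L^2$-admissible line and not on the $\Lp$-Strichartz line when $p>2$, so I would first pass to the auxiliary pair $(\tilde q,\kappa)$ fixed by $\frac{2}{\tilde q}+\frac1\kappa=\frac1p$. Since $p\ge2$ gives $\frac1p\le\frac12$, one finds $\tilde q\ge\beta$, and Lemma \ref{FSPA} then yields $\|U(t)\phi\|_{L^{\tilde q}(L^{\kappa})}\le C\|\phi\|_{\Lp}$; H\"older in time gives
\begin{equation*}
\|U(t)\phi\|_{L^{\beta}_{[0,T]}(L^{\kappa})}\le T^{\frac1\beta-\frac1{\tilde q}}\,\|U(t)\phi\|_{L^{\tilde q}_{[0,T]}(L^{\kappa})}\le CT^{\frac1\beta-\frac1{\tilde q}}\|\phi\|_{\Lp},
\end{equation*}
with a non-negative power of $T$ since $\beta\le\tilde q$.

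For the nonlinear term I would apply Proposition \ref{inhomo} with $(\beta,\kappa,\sigma,\rho)=\left(\frac{4(\A-1)}{\A-2},2(\A-1),\frac{4(\A+1)}{3\A+5},\frac{\A+1}{\A}\right)$; one checks $2+\frac2\beta+\frac1\kappa=\frac52=\frac2\sigma+\frac1\rho$, that $1<\rho<2$, and that $\frac32-\frac1\rho<\frac1\sigma<1$ holds for $1<\A<5$. Since $N(u)=|u|^{\A-1}u$ and $\A\rho=\A+1$, this controls the Duhamel term by $C\||u|^{\A-1}u\|_{L^{\sigma}_{[0,T]}(L^{\rho})}=C\|u\|_{L^{\A\sigma}_{[0,T]}(L^{\A+1})}^{\A}$, and as $\A\sigma=\frac{4\A(\A+1)}{3\A+5}\le\frac{4(\A+1)}{\A-1}$ for $\A\le5$, H\"older in time upgrades the hypothesis to $\|u\|_{L^{\A\sigma}_{[0,T]}(L^{\A+1})}\le T^{\theta}\|u\|_{L^{4(\A+1)/(\A-1)}_{[0,T]}(L^{\A+1})}$ with $\theta\ge0$. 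Adding the two contributions produces a bound of the shape $\|u\|_{L^{\beta}_{[0,T]}(L^{\kappa})}\le CT^{a}\|\phi\|_{\Lp}+CT^{b}\|u\|_{L^{4(\A+1)/(\A-1)}_{[0,T]}(L^{\A+1})}^{\A}$, finite for each fixed $T>0$ by the assumption $u\in L^{4(\A+1)/(\A-1)}_{loc}(L^{\A+1})$, which is exactly the claim.

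The only real work is the exponent bookkeeping, and the two places where it could fail are the signs of the two time-H\"older powers. The nonlinear one is non-negative precisely because $\A<5$ (forcing $\A\sigma\le\frac{4(\A+1)}{\A-1}$), and the linear one because $p\ge2$ (forcing $\tilde q\ge\beta$); in addition the auxiliary pair in the free estimate is admissible only when $\tilde q$ is finite and at least $2$, i.e. when $p\le 2(\A-1)$, a condition guaranteed by the standing hypotheses of Proposition \ref{hatapriori}. I expect that checking that the chosen $(\sigma,\rho)$ lands inside the admissibility box of Proposition \ref{inhomo} while reproducing the given space $L^{\A+1}$ is the most delicate point, but no genuinely new estimate beyond Lemma \ref{FSPA} and Proposition \ref{inhomo} is needed.
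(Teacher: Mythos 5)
Your proof is correct and follows essentially the same route as the paper: Duhamel splitting, Lemma \ref{FSPA} plus H\"older in time for the free part, and the inhomogeneous Strichartz estimate with $(\sigma,\rho)=\bigl(\tfrac{4(\A+1)}{3\A+5},\tfrac{\A+1}{\A}\bigr)$ followed by H\"older in time (power $T^{\frac{5-\A}{4}}$) for the nonlinear part. The only cosmetic difference is that you invoke Proposition \ref{inhomo} where the paper cites the classical admissible inhomogeneous Strichartz estimate with the same exponents, and your explicit flagging of the constraint $p\le 2(\A-1)$ for the free-part estimate is a point the paper leaves implicit.
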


\begin{proof}
Fix $T>0$.  By Lemma \ref{FSPA} and the admissible version of Strichartz esitmates for the inhomogeneous equation, we have
\begin{eqnarray*}
\|u\|_{L^{\frac{ 4(\A-1) }{\A-2  }  }_{[0,T]  }  (L^{2(\A-1)}   ) }
&\le & \| U(t) u(0)\|_{L^{\frac{ 4(\A-1) }{\A-2  }  }_{[0,T]  }  (L^{2(\A-1)}   ) }+ \left\|        \int^t_0 U(t-\tau) N(u(\tau))
\right\|_{L^{\frac{ 4(\A-1) }{\A-2  }  }_{[0,T]  }  (L^{2(\A-1)}   ) } \\
&\le & T^{\frac{1}{2}-\frac{1}{p}} \|U(t) u(0) \|_{ L^{ \frac{2p(\A-1)}{2(\A-1)-p} }_{[0,T]}  
(L^{2(\A-1)})  } + C\left\| |u|^{\A-1}u      \right\|_{L_{[0,T]}^{ \frac{4(\A+1)}{3\A+5}  }  (L^{\frac{\A+1}{\A}})      } \\
&\le & CT^{\frac{1}{2}-\frac{1}{p}} \|u(0)\|_{\Lp} + C\|u\|_{L_{[0,T]}^{ \frac{4\A(\A+1)}{3\A+5} }  (L^{\A+1})      }^{\A} \\
&\le & CT^{\frac{1}{2}-\frac{1}{p}} \|u(0)\|_{\Lp} + CT^{\frac{5-\A}{4}}\|u\|^{\A}_{L_{[0,T]}^{\frac{4(\A+1)}{\A-1}  } (L^{\A+1})          }.
\end{eqnarray*}

\end{proof}

\bigskip
Now we go back to (\ref{hatapriori2}) and continue with the proof of Proposition \ref{hatapriori}.  The above lemma tells us that
$u \in L^{\frac{4(\A-1)}{\A-2}}_{loc}(L^{2(\A-1)})$.  Therefore for $u\in [0,T_0)$, we have
\begin{equation*}
\|u(t)\|_{\Lp} \le \|\phi \|_{\Lp} +C\|u\|_{L^{\frac{4(\A-1)}{\A-2}  }_{[0,T_0]}(L^{2(\A-1)})} \|u \|_{L_{[0,t]}^{\frac{4 }{5-\A }} (\Lp)}
\end{equation*}
Now we may write
\begin{equation*}
\|u(t)\|^{\frac{4}{5-\A}}_{\Lp} \le C +C_{T_0} \int^t_0 \|u(s)\|_{\Lp}^{\frac{4}{5-\A}} ds.
\end{equation*}
Then by Grownwall's lemma, we have
\begin{equation*}
\|u(t)\|_{\Lp} \le C\exp(CT_0).
\end{equation*}
Consequently, we have
\begin{equation*}
\sup_{t\in [0,T_0)} \|u(t)\|_{\Lp}<\infty.
\end{equation*}

\noindent\textbf{Step 2}.  Set
\begin{equation*}
\widehat{T}_{\max}(u) \triangleq \sup\{ T>0\,| \, u|_{[0,T)\times \R} \in C([0,T) ;\Lp) \}.
\end{equation*}
Then $\widehat{T}_{\max}(u)=\infty$.

\noindent\textit{Proof of Step 2}\quad The estimate (\ref{hatstep1}) has been proved and the assertion follows from a standard argument of blow-up alternative as in
the proof of Theorem \ref{LGWP}.
  \begin{lem}
 Let $2\le p<4$ and let $(\rho,\sigma)$ be such that $(\rho',\sigma')\in \widehat{
 \mathscr{S}}(p')$.
 
 \end{lem}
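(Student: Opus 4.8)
The statement is evidently the $\Lp$-counterpart of the bilinear Duhamel estimate in Proposition \ref{bilinear}(ii); under the stated hypotheses the conclusion I would establish is
\begin{equation*}
\sup_{t\in[0,T]}\left\|\int^t_0 U(-s)F(s)\,ds\right\|_{\Lp}\le C\|f_1\|_{L^{\sigma}_{[0,T]}(L^{\kappa})}\|f_2\|_{L^{\infty}_{[0,T]}(\Lp)}
\end{equation*}
for all $f_1,f_2,F$ with $|F|\le|f_1||f_2|$, where $\kappa$ is fixed by $\frac{1}{\kappa}=\frac{1}{\rho}-\frac{1}{p}$. The plan is to read this off directly from Corollary \ref{DFS}, in the same spirit as the derivation of Proposition \ref{bilinear} in the pure $L^p$ setting. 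The decisive simplification is that $\Lp$ enjoys the unitarity $\|U(t)\phi\|_{\Lp}=\|\phi\|_{\Lp}$, so none of the factorization of $U(-t)$, the change of the time variable $t\mapsto1/t$, or the weight $s^{1/p-1/2}$ that appeared in Proposition \ref{bilinear} is needed here; a single application of Corollary \ref{DFS} replaces all of them.

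First I would invoke Corollary \ref{DFS} with $(\tilde\sigma,\tilde\rho)=(\sigma,\rho)$, which is permitted exactly because $(\sigma',\rho')\in\widehat{\mathscr{S}}(p')$ and $2\le p<4$; taking the interval $I=[0,t]$ this bounds the left-hand side by $C\|F\|_{L^{\sigma}_{[0,T]}(L^{\rho})}$. Next, for fixed $s$ I would combine the pointwise hypothesis $|F(s)|\le|f_1(s)||f_2(s)|$ with H\"older's inequality in the space variable to split
\begin{equation*}
\|F(s)\|_{L^{\rho}}\le\|f_1(s)\|_{L^{\kappa}}\|f_2(s)\|_{L^{p}},\qquad \frac{1}{\rho}=\frac{1}{\kappa}+\frac{1}{p}.
\end{equation*}
The only point at which the $\Lp$-norm enters is the third step: since $p\ge2$, the Hausdorff--Young inequality gives $\|f_2(s)\|_{L^{p}}\le\|f_2(s)\|_{\Lp}$, so the $L^p$-factor may be replaced by its larger $\Lp$-norm. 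Finally, taking the $L^{\sigma}$-norm in $t$ and estimating the second factor by $\|f_2\|_{L^{\infty}_{[0,T]}(\Lp)}$ produces the asserted bound.

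I expect the only real obstacle to be the bookkeeping of exponents rather than any analytic subtlety: one must verify that $\kappa$ defined by $1/\kappa=1/\rho-1/p$ lies in $[1,\infty]$ for every admissible pair, so that the H\"older split above is legitimate, and that this is consistent with the membership $(\sigma',\rho')\in\widehat{\mathscr{S}}(p')$. It is worth stressing that the two ends of the hypothesis $2\le p<4$ play distinct roles and each is used once: the bound $p<4$ is what makes Corollary \ref{DFS} available (equivalently, it is the range $p'>4/3$ in which the generalized Strichartz estimate of Proposition \ref{GFSprop} holds for the conjugate exponent), whereas $p\ge2$ is precisely the range in which the Hausdorff--Young step $\|f_2\|_{L^p}\le\|f_2\|_{\Lp}$ points in the direction we need. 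Once these compatibility checks are carried out the lemma follows with no further input.
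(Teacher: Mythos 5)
The lemma as printed in the paper is an editing fragment: it records only the hypotheses $2\le p<4$ and $(\rho',\sigma')\in\widehat{\mathscr{S}}(p')$, with no conclusion and no proof, so there is no authorial proof to compare against. Your reconstruction of the intended conclusion and its derivation is correct and coincides with the computation the paper carries out inline in Step 1 of the proof of Proposition \ref{hatapriori}: apply Corollary \ref{DFS}, split $\|F(s)\|_{L^{\rho}}\le\|f_1(s)\|_{L^{\kappa}}\|f_2(s)\|_{L^{p}}$ by H\"older with $1/\kappa=1/\rho-1/p$, and replace $\|f_2(s)\|_{L^p}$ by $C\|f_2(s)\|_{\Lp}$ using $p\ge 2$; there the author takes $\sigma=4/3$, $\rho=2p/(p+2)$, $\kappa=2$, which is an admissible instance of your general statement. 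So the proposal is correct and is essentially the same approach the paper uses.
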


 We put
 \begin{equation*}
 \rho=\frac{2p}{p+2}+\varepsilon_1
 \end{equation*}
 where $\varepsilon_j>0$ is taken sufficiently small.

\subsection{Proof of Corollary \ref{scatter}}
The corollary is proved in a very standard manner.  Arguing as in the proof of Theorem \ref{SGWP}
\begin{equation}
\|U(-t_1)u(t_1) -U(-t_2)u(t_2) \|_{L^p}\le C\|u\|_{L^{q}_{[t_1,t_2]}(L^{2(\A-1)})}^{\A-1} \|U^{-1} u\|_{L^{\infty}(L^p)}
\end{equation}
Now since $u \in L^q( \R ;L^{2(\A-1)})$ and $M\triangleq\sup_{t\in \R}\|U(-t)u(t)\|_{L^p}<\infty$, we have
\begin{equation*}
\lim_{t_1,t_2 \to \infty} \|u\|_{L^{q}_{[t_1,t_2]}(L^{2(\A-1)})}^{\A-1} \|U^{-1} u\|_{L^{\infty}(L^p)} \le M\lim_{t_1,t_2 \to \infty} \|u\|_{L^{q}_{[t_1,t_2]}(L^{2(\A-1)})}^{\A-1}=0,
\end{equation*}
which implies $U(-t)u(t)$ is a Cauchy sequence in $L^p$ concerning the limit $t\to \infty$.  Hence the existence of the scattering state $\phi_+\in L^p$.

\appendix
\section{Sketch of Proof of Proposition \ref{gexisprop1}}

\renewcommand{\theequation}{\Alph{section}.\arabic{equation}}

\quad Before proceeding to the proof we prepare preliminary estimates.  We set
\begin{equation*}
G(v,w_1,w_2) \triangleq |v+w_1|^{\A-1}(v+w_1) -|v+w_2|^{\A-1}(v+w_2).
\end{equation*}
Note that $G(v,w,0)=|v+w|^{\A-1}(v+w)-|v|^{\A-1}v$.
Hereafter, we write $r=\A+1$ for readability.  Since $(Q_2(r), r)$ is admissible, we have
\begin{equation}
\|U(t)\varphi \|_{L^{Q_2(r)}(\R;L^{r})} \le C\|\varphi \|_{L^2}.
\label{L2StrS3}
\end{equation}
Moreover, if $p$ is in the range given by (\ref{pranggexis}), the estimates
\begin{eqnarray}
&&\left\| \int^t_0 U(t-s) G(v,w_1,w_2) ds \right\|_{L^{Q_p(r)}_{[0,T]}(L^{r})}  \label{NE1} \\
&\le & C\biggl[ T^{1-\frac{\A-1}{4}} \|v\|_{L_{[0,T]}^{Q_2(r)}(L^r)}^{\A-1} 
+T^{1-\frac{\A-1}{2p}} \sum_{j=1}^2\|w_j \|_{L^{Q_p(r)}_{[0,T]}(L^r)}^{\A-1} \biggr] 
\times \|w_1-w_2 \|_{L^{Q_p(r)}_{[0,T]}(L^r)}  \notag 
\end{eqnarray}
and
\begin{eqnarray}
&&\left\| \int^t_0 U(t-s) G(v,w,0) ds \right\|_{L^{\infty}_{[0,T]}(L^2)}  \label{NE2} \\
&\le & C\biggl[ T^{1-\frac{\A-1}{4}  -\frac{1}{2}\left(\frac{1}{p}-\frac{1}{2}\right)} \|v\|_{L_{[0,T]}^{Q_2(r)}(L^r)}^{\A-1} \|w\|_{L^{Q_p(r)}_{[0,T]}(L^r)}
+T^{1-\frac{\A-1}{2p}-\frac{1}{2}\left(\frac{1}{p}-\frac{1}{2}\right)} \|w\|_{L^{Q_p(r)}_{[0,T]}(L^r)}^{\A} \biggr] \notag
\end{eqnarray}
hold true for any $T>0$.

Now we establish a solution to (\ref{NLS}) for $\phi \in \mathcal{A}(p,r,\gamma)$. We fix $N$ and we first consider the Cauchy problem
\begin{equation}
iv_t +v_{xx}+|v|^{\A-1}v=0,\quad v|_{t=0}=\varphi_N. \label{CP2}
\end{equation}
Since $\varphi_N \in L^2$, we already know that the unique global solution $v\in C([0,T] ; L^2)\cap L^{Q_2(r)}([0,T] ;L^r)$
exists and has the $L^2$-conservation law
\begin{equation}
\|v(t,\cdot) \|_{L^2} =\|\varphi_N\|_{L^2},\quad \forall t\in \R. \label{conservation}
\end{equation}
Next, we define
\begin{equation}
\delta_N=(M\cdot N)^{-\frac{4(\A-1)}{5-\A}\gamma } \label{deltadef}
\end{equation}
for an absolute constant $M>0$ which will be determined sufficiently large.  It is not difficult to show that the estimate
\begin{equation}
\|v \|_{L^{Q_2(r)}_{[0,\delta_N]} (L^r)} \le C\|\varphi_N\|_{L^2}\le CN^{\gamma} \label{L2solest}
\end{equation}
holds true.

Now if we find a solution $w$ to the Cauchy problem
\begin{equation}
iw_t+w_{xx}+G(v,w,0)=0,\quad w|_{t=0}=\psi_N, \label{CPP}
\end{equation}
we then obtain a solution $u=v+w$ to the original Cauchy problem (\ref{NLS}) with $N(u)=|u|^{\A-1}u$.  We establish a solution to (\ref{CPP}) in two steps.  We first prove the existence on
the small time interval $[0,\delta_N]$.  Then in the second step, we extend the solution to the time $T_N$.
\\
\textbf{First step}.\quad We can find a fixed point of the operator
\begin{equation*}
\mathscr{T}w\triangleq U(t)\psi_N +i\int^t_0 U(t-s) G(v,w,0) ds
\end{equation*}
in the set 
\begin{equation*}
\{w \in L^{Q_p(r)}_{[0,\delta_N]} (L^r)\,|\, \|w\|_{L^{Q_2(r)}_{[0,\delta_N]} (L^r)} \le RN^{-1} \,\}
\end{equation*}
for a suitable choice of $R>0$.  For example, if $w\in \mathscr{V}$, then
\begin{eqnarray*}
\|\mathscr{T}w \|_{L^{Q_p(r)}_{[0,\delta_N]} (L^r)}
&\le & \|U(t) \psi_N \|_{L^{Q_p(r)}_{[0,\delta_N]} (L^r)} + \left\| \int^t_0 U(t-s) G(v,w,0) ds \right\|_{L^{Q_p(r)}_{[0,\delta_N]} (L^r)}\\
&\le & C_0 N^{-1} + C\delta_N^{1-\frac{\A-1}{4}} \|v\|^{\A-1}_{L^{Q_2(r)}_{[0,\delta_N]} (L^r)} \|w \|_{L^{Q_p(r)}_{[0,\delta_N]} (L^r)}+
C\delta_N^{1-\frac{\A-1}{2p}} \|w\|_{L^{Q_p(r)}_{[0,\delta_N]} (L^r)}^{\A},
\end{eqnarray*}
where we used (\ref{decomp3}) and (\ref{NE1}) in the last step.  By (\ref{deltadef}), (\ref{L2solest}) and the assumption that $w\in \mathscr{V}$, $\delta_N$ and the norms $ \|v\|, \|w\|$ appearing in the right hand side
are controlled using $N$.  A calculation shows that  
\begin{equation*}
\| \mathscr{T}w \|_{L^{Q_2(r)}_{[0,\delta_N]} (L^r)} \le C_0 N^{-1} +CR^{a(\A)}M^{-b({\A)}} N^{-1}
\end{equation*}
for some $a(\A),b(\A)>0$.  This implies that $\mathscr{T}$ maps from $\mathscr{V}$ to itself if we take $M,R$ sufficiently large.  Similarly, we can show that $\mathscr{T}$ is a contraction mapping.  Consequently, we find the solution to (\ref{CPP}) in the space $L^{Q_2(r)}_{[0,\delta_N]} (L^r)$ 
the contraction mapping principle.

\noindent\textbf{Second step}.  The local solution $u:[0,\delta_N] \times \R\to \C$ established above is of the form
\begin{eqnarray*}
u(t)&=&v(t)+w(t)=v(t)+\int^t_0 U(t-s)G(v,w,0)ds +U(t)\psi_N \\
&\triangleq &v(t)+d(t)+l(t).
\end{eqnarray*}
The key is to notice that $d(t)\in L^2$ and its norm $\|d(t)\|_{L^2}$ is far smaller than $\|v(t)\|_{L^2}=\|\varphi_N\|_{L^2}\sim N^{\gamma}$
if $N$ is large.  Hence $\|v(\delta_N)+d(\delta_N)\|_{L^2}\sim N^{\gamma}$ for large $N$.  In addition, by (\ref{decomp3}), we see that
\begin{equation*}
\|U(t-\delta_N)l(\delta_N)\|_{L^{Q_2(r)}_{[0,\delta_N]} (L^r)}
\le C_0 N^{-1}.
\end{equation*}
These imply that $u(\delta_N)$ admits a decomposition $u(\delta_N)=[v(\delta_N)+d(\delta_N)]+l(\delta_N)$ with property similar to (\ref{decomp1})--(\ref{decomp3}).  Therefore,
arguing as in the previous step we construct a solution to the problem
\begin{equation*}
iu_t+u_{xx}+|u|^{\A-1}u=0,\,t>\delta_N,\,x\in \R
\end{equation*}
on the time interval $[\delta_N, 2\delta_N]$.  Repeating similar arguments, we may extend the local solution $u$ to the time $3\delta_N,4\delta_N,\cdots $ as long as
the accumulated Duhamel part from the equation (\ref{CPP}) can be controlled by $N^{\gamma}$.  By (\ref{NE2}) we get an estimate for $\|d(\delta_N)\|_{L^2}$:
\begin{equation*}
\left\| \int^t_0 U(t-s)G(v,w,0)ds \right\|_{L^{\infty}_{[0,\delta_N]} (L^2)} \le CN^{-1+\frac{2(\A-1)}{5-\A}\left(\frac{1}{p}-\frac{1}{2}\right)}.
\end{equation*}
This provides information on how many times one can extend the local solution.  The solution can be extended to the time $k\delta_N$ as long as
\begin{equation}
kN^{-1+\frac{2(\A-1)}{5-\A}\left(\frac{1}{p}-\frac{1}{2}\right)} \le CN^{\gamma}. \label{kdefinition}
\end{equation}
The existence time $T_N=k\delta_N$ appearing in the assertion of Proposition \ref{gexisprop1} is derived by solving (\ref{kdefinition}).


\end{document}